\documentclass[twoside, 11pt]{article}
\usepackage{amsmath,amssymb,amsthm,mathrsfs}
\usepackage[margin=2cm]{geometry} 
\usepackage{lipsum}
\usepackage{titlesec,hyperref}
\usepackage{fancyhdr}
\usepackage[numbers,sort&compress]{natbib}
\usepackage{color}

\pagestyle{fancy}
\fancyhf{}
\fancyhead[CO]{\footnotesize\it
A Note on the Well-posedness of Prandtl Equations}
\fancyhead[CE]{\footnotesize\it J.C.Gao, D.W.Huang, Z.A.Yao}
\fancyfoot[CE,CO]{\footnotesize\rm\thepage}
\fancypagestyle{plain}
{
\fancyhf{}

}

\linespread{1.1}

\titleformat{\subsection}{\it}{\thesubsection.\enspace}{1.5pt}{}
\titleformat{\subsubsection}{\it}{\thesubsubsection.\enspace}{1.5pt}{}

\newtheorem{theorem}{Theorem}[section]

\newtheorem{remark}{Remark}[section]
\newtheorem{lemma}[theorem]{Lemma}
\numberwithin{equation}{section}

\allowdisplaybreaks

\def\a{\alpha}

\def\ga{\gamma}
\def\si{\sigma}
\def\inf{\infty}
\def\t{\tau}
\def\ep{\varepsilon}
\def\y{(1+y)}
\def\u{u^\varepsilon}
\def\v{v^\varepsilon}
\def\w{w^\varepsilon}
\def\g{g^\varepsilon}

\def\TR{\mathbb{T}\times \mathbb{R}^+}

\def\es{\epsilon}

\def\d{\delta}

\def\p{\partial}

\def\O{\Omega}
\def\T{\mathbb{T}}

\begin{document}
\title{A Note on the Well-posedness of Prandtl Equations in Dimension Two\hspace{-4mm}}

\author{Jincheng Gao$^{\dag}$   \quad    Daiwen Huang$^{\ddag}$   \quad    Zheng-an Yao$^{\dag}$\\[10pt]
\small {$^\dag $School of Mathematics, Sun Yat-sen University,}\\
\small {Guangzhou, 510275,  P.R.China}\\[5pt]
\small {$^\ddag $Institute of Applied Physics and Computational Mathematics,}\\
\small {Beijing, 100088, P.R.China}\\[5pt]
}

\footnotetext{Email: \it gaojch5@mail.sysu.edu.cn(J.C.Gao),
                     \it hdw55@tom.com(D.W.Huang)
                     \it mcsyao@mail.sysu.edu.cn(Z.A.Yao).}
\date{}

\maketitle

\begin{abstract}
In this paper, we investigate the local-in-time well-posedness for the two-dimensional
Prandtl equations in weighted Sobolev spaces under the Oleinik's monotonicity condition.
Due to the loss of tangential derivative caused by vertical velocity
appearing in convective term, we add with artificial horizontal viscosity term
to construct an approximate system that can obtain local-in-time well-posedness results easily.
For this approximate system, we construct a new weighted norm for the vorticity
to derive a positive life time(independent of artificial viscosity coefficient)
and obtain uniform bound for vorticity in this weighted norm.
Then, based on compactness argument, we prove the solution of approximate system
converging to the solution of original Prandtl equations,
and hence, obtain the local-in-time well-posedness result for the Prandtl
equations with any large initial data, which improves
the recent work \cite{Masmoudi-Wong}.

\end{abstract}


\section{Introduction}

Throughout this paper, we are concerned with the two-dimensional Prandtl equations,
derived by Ludwing Prandtl \cite{Prandtl},
in a periodic domain $\T\times \mathbb{R}^+
:=\{(x, y):x\in \mathbb{R}/\mathbb{Z}, ~0 \le y <+\infty\}$:
\begin{equation}\label{Prandtl-eq}
\left\{
\begin{aligned}
&\p_t u+u \p_x u+v \p_y u-\p_y^2 u+\p_x p=0,\\
&\p_x u+\p_y v=0,\\
&u|_{y=0}=v|_{y=0}=0, \quad \underset{y\rightarrow +\infty}{\lim}u=U(t,x),\\
&u|_{t=0}=u_{0},
\end{aligned}
\right.
\end{equation}
where the velocity field $(u, v):=(u(t, x, y), v(t, x, y))$ is unknown,
and the initial data $u_0:=u_0(x, y)$ and the outer flow
$U:=U(t, x)$ are given and satisfy the compatibility conditions:
\begin{equation}
\left.u_0 \right|_{y=0}=0, \quad
\lim_{y\rightarrow +\infty} u_0=\left. U \right|_{t=0}.
\end{equation}
Furthermore, the given scalar pressure $p:=p(t, x)$ and the outer flow $U$
satisfy the Bernoulli's law:
\begin{equation}\label{o-Bernoulli}
\partial_t U+U \partial_x U=-\partial_x p.
\end{equation}

Note that the Prandtl equations mentioned above arise from the vanishing viscosity
limit of Navier-Stokes equations in a domain with Dirichlet boundary condition.
This is due to the formation of a boundary layer, where the solution undergoes a sharp transition
from a solution of the Euler system to the zero non-slip boundary condition
on boundary of the Navier-Stokes system. This boundary layer satisfies the
Prandtl boundary layer equations formally.
The first systematic work in rigorous mathematics was obtained by Oleinik \cite{{Oleinik1},{Oleinik2}}
in which she established the local in time well-posedness of Prandtl equations
in dimension two by applying the Crocco transformation under the monotonicity condition
on the tangential velocity field in the normal direction to the boundary.
For more extensional mathematical results, the interested readers can refer to
the classical book finished by Oleinik and Samokhin \cite{Oleinik-Samokhin}.
In addition to Oleinik's monotonicity assumption on the velocity field,
by imposing a so-called favorable condition on the pressure, Xin and Zhang \cite{Xin-Zhang}
obtained the existence of global weak solutions to the Prandtl equation.
As mentioned in \cite{{Caflisch-Sammartino},{Varet-Dormy},{Varet-Nguyen}}, the local in time
well-posedness of Prandtl equation for the initial data in Sobolev space is an open problem.
Then, the researchers in \cite{Xu-Yang-Xu} and \cite{Masmoudi-Wong} independently used the
nonlinear cancelation method to establish well-posedness theory for the two-dimensional
Prandtl equations in the framework of Sobolev spaces.
\emph{
Note that the local-in-time well-posedness result in $H^{s,\ga}$ Sobolev framework,
obtained by Masmoudi and Wong \cite{Masmoudi-Wong},
required there exists a small constant $\d_0$ such that
\begin{equation}\label{initial-data-1}
\sum_{|\a| \le 2}|\y^{\si+\a_2} D^\a w_0|^2 \le {(2\d_0)^{-2}},
\end{equation}
and for $s=4$, they required additionally
\begin{equation}\label{initial-data-2}
\|w_0\|_{H^{s,\ga}_g}\le C\d_0^{-1}.
\end{equation}
Thus, the main target in this paper is to remove the conditions \eqref{initial-data-1}-\eqref{initial-data-2}.
In other words, the local-in-time well-posedness results for the Prandtl equations \eqref{Prandtl-eq}
will be established for any large initial data.
}

Finally, we point out that it is an outstanding open problem to rigorously justify
the validity of expansion in the inviscid limit.
On one hand, within space of functions that are analytic,
Sammartino and Caflisch \cite{{Sammartino-Caflisch1},{Sammartino-Caflisch2}}
obtained the well-posedness in the framework of analytic functions without the
monotonicity condition on the velocity field and justified the boundary layer expansion
for the unsteady incompressible Navier-Stokes equations.
On the other hand, for more ``realistic'' functional settings,
Guo and Nguyen\cite{Guo-Nguyen2} justified the boundary layer expansion for
the steady incompressible flow with a non-slip boundary condition on a moving plate.
This result was  generalized to the case of non-moving or with external forcing(cf.\cite{{Guo-Iyer1},{Guo-Iyer2}}).
For more results in this direction, the interested readers can refer to \cite{{Grenier-Guo-Nguyen1},{Grenier-Guo-Nguyen2},{Gie-Temam}}
and references therein.

In this work, we will consider the Prandtl equations \eqref{Prandtl-eq}
under Oleinik's monotonicity assumption:
$$
w:=\p_y u>0.
$$
Under this hypothesis, one must further assume $U>0$.
Let us first introduce some weighted Sobolev spaces for later use.
Denoting the vorticity $w:=\p_y u$,
we define the weighted Sobolev space $H^{s, \gamma}$ for $w$ by
$$
H^{s,\gamma}:= \{w: \mathbb{T}\times \mathbb{R}^+ \rightarrow \mathbb{R}: \|w\|_{H^{s,\gamma}}<\infty \},
$$
where the weighted $H^{s,\gamma}$ norm is defined by
$$
\|w\|_{H^{s,\gamma}}^2:=\sum_{|\a|\le s}
  \|(1+y)^{\gamma+\a_2} D^\a w\|_{L^2(\mathbb{T} \times \mathbb{R}^+)}^2,
$$
where $D^\a=\p_x^{\a_1}\p_y^{\a_2}$.
Here, the main idea is adding an extra weight $(1+y)$ for each $y-$derivative.
This corresponds to the weight $\frac{1}{y}$ in the Hardy-type inequality.
Let us define
\begin{equation}\label{B-norm}
\|w(t)\|_{\mathcal{B}^{s,\ga,\si}}^2:=\|w(t)\|_{H^{s,\ga}}^2+\sum_{1\le |\a|\le 2}\|\y^{\si+\a_2}D^\a w(t)\|_{L^\inf}^2,
\end{equation}
and introduce the space
\begin{equation*}
\widetilde{H}^{s,\ga}_{\si,\d_0}:=
\{w:\T \times \mathbb{R}^+ \rightarrow \mathbb{R}|
    \|w(t)\|_{\mathcal{B}^{s,\ga,\si}}^2<+\inf,
    \quad \y^\si w \ge \d_0,\}
\end{equation*}
where $s\ge 4, \ga \ge 1, \si >\ga+\frac{1}{2}, \d_0 \in (0, \frac{1}{2})$.
Now, we can state our main result:

\begin{theorem}\label{Main-Result}
Let $s \ge 4$ be an even integer, $\gamma \ge 1, \sigma> \gamma+\frac{1}{2}$
and $\d_0 \in (0, \frac{1}{2})$.
Suppose the outer flow $U$ satisfies
\begin{equation}\label{U-assumption}
M_U:=\sum_{k=0}^{s/2+1} \underset{0\le t \le T}{\sup} \|\p_t^k U\|_{H^{s-2k+2}(\T)}^2<+\infty.
\end{equation}
Assume that the initial tangential velocity $u_0-U|_{t=0}\in H^{s, \gamma-1}$
and the initial vorticity $w_0:=\p_y u_0 \in \widetilde{H}^{s,\ga}_{\si,2\d_0}$.
Then there exist a times $T=T(s, \gamma, \sigma, \delta_0, \|w_0\|_{\mathcal{B}^{s,\ga,\si}}, M_U)>0$ and
a unique classical solution $(u, v)$ to the Prandtl equations \eqref{Prandtl-eq}-\eqref{o-Bernoulli} such that
\begin{equation*}
\underset{0\le t \le T}{\sup}{\|w(t)\|_{\mathcal{B}^{s,\ga,\si}}^2}
\le {C}_{s,\ga,\si} \{1+\|w_0\|_{\mathcal{B}^{s,\ga,\si}}^8+M_U^4\}<+\infty,
\end{equation*}
and
\begin{equation*}
\underset{\T\times\mathbb{R}^+}{\min}\y^{\si}w(t)\ge \d_0,
\end{equation*}
for all $t\in [0, T]$.
\end{theorem}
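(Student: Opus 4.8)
The plan is to follow the now-standard \emph{viscous-splitting plus compactness} route, but with a carefully chosen weighted vorticity norm that keeps all constants independent of the artificial viscosity. First I would regularize \eqref{Prandtl-eq} by adding a small horizontal viscosity $\es \p_x^2 u$ to the momentum equation (equivalently $\es\p_x^2 w$ to the vorticity equation), giving an approximate system for $(\u,\v,\w)$. Because this regularized system is (degenerate) parabolic in both variables, standard linear parabolic theory plus a fixed-point/iteration argument yields, for each fixed $\es>0$, a local classical solution in the weighted Sobolev class; at this stage the existence time $T_\es$ and the bounds are allowed to depend on $\es$. The monotonicity $\y^\si\w\ge\d_0$ is propagated for short time by a maximum-principle argument applied to the equation satisfied by $\y^\si\w$ (using that $u_0-U|_{t=0}\in H^{s,\ga-1}$ and $w_0\in\widetilde H^{s,\ga}_{\si,2\d_0}$, so the strict lower bound $2\d_0$ leaves room to lose a bit).

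The heart of the argument is the \emph{$\es$-uniform a priori estimate}. Working with the vorticity equation
\begin{equation*}
\p_t\w+\u\,\p_x\w+\v\,\p_y\w-\p_y^2\w-\es\p_x^2\w=0,
\end{equation*}
the obstruction is the term $\v\,\p_x\w$ hidden in $D^\a$-differentiating: $\v=-\int_0^y\p_x\u\,dy'$ carries a full tangential derivative, so naive energy estimates on $D^\a\w$ lose one $x$-derivative (the classical Prandtl difficulty). I would overcome this exactly as in the monotonicity-based literature: introduce the good unknown
\begin{equation*}
g^{\a}:=D^\a\w-\frac{\p_y\w}{\w}\,D^\a u\qquad\text{(Masmoudi--Wong-type cancellation)},
\end{equation*}
for which the worst tangential-derivative terms cancel, and run a weighted $L^2$ estimate with weight $\y^{2(\ga+\a_2)}$, plus a weighted $L^\infty$ estimate for $1\le|\a|\le 2$ via the maximum principle on the equation for $\y^{\si+\a_2}D^\a\w$. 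Summing over $|\a|\le s$ produces a closed differential inequality
\begin{equation*}
\frac{d}{dt}\|\w(t)\|_{\mathcal B^{s,\ga,\si}}^2\le C_{s,\ga,\si}\,P\!\left(\|\w(t)\|_{\mathcal B^{s,\ga,\si}}^2,\ M_U\right),
\end{equation*}
with $P$ a fixed polynomial and $C_{s,\ga,\si}$ \emph{independent of $\es$}; crucially the $\es\p_x^2$ terms only contribute the good negative term $-\es\|\p_x D^\a\w\|^2$ and boundary-free integration by parts, so they never enter the right-hand side. A continuity/bootstrap argument then gives a life-span $T>0$ depending only on $s,\ga,\si,\d_0,\|w_0\|_{\mathcal B^{s,\ga,\si}},M_U$ (not on $\es$), on which $\|\w(t)\|_{\mathcal B^{s,\ga,\si}}^2\le C_{s,\ga,\si}\{1+\|w_0\|_{\mathcal B^{s,\ga,\si}}^8+M_U^4\}$ and $\min\y^\si\w(t)\ge\d_0$ hold uniformly in $\es$.

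With the uniform bounds in hand I would pass to the limit $\es\to 0$. The uniform $\mathcal B^{s,\ga,\si}$ bound controls $\w$ and, through $u=U-\int_y^\infty\w\,dy'$ and $v=-\int_0^y\p_x u\,dy'$, the full solution $(\u,\v,\w)$ in high weighted Sobolev norms; using the equations to bound $\p_t$ of the relevant quantities gives equicontinuity in time, so Aubin--Lions compactness (on $\T\times[0,N]$ for each $N$, then a diagonal extraction) yields a subsequence converging to a limit $(u,v,w)$ that solves \eqref{Prandtl-eq}--\eqref{o-Bernoulli} classically, satisfies the stated bound by weak lower semicontinuity, and inherits $\min\y^\si w(t)\ge\d_0$. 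Uniqueness follows by a standard energy estimate on the difference of two solutions, again using the cancellation via the good unknown to absorb the tangential-derivative loss (the monotonicity lower bound $\y^\si w\ge\d_0$ makes $\p_y w/w$ a legitimate multiplier). I expect the main obstacle to be \textbf{bookkeeping the $\es$-uniformity of the weighted $L^\infty$ estimates for $D^\a\w$, $1\le|\a|\le2$}: one must check that the maximum principle applied to $\y^{\si+\a_2}D^\a\w$ for the $\es$-regularized equation produces boundary and commutator terms that are controlled purely by $\|\w\|_{\mathcal B^{s,\ga,\si}}$ and $M_U$, with the $\es\p_x^2$ contribution having the right (favorable) sign at an interior extremum and vanishing on the boundary $y=0$ after using the compatibility conditions.
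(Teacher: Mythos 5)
Your proposal follows essentially the same route as the paper: regularize with artificial horizontal viscosity, obtain $\ep$-uniform weighted $L^2$ estimates via the Masmoudi--Wong good unknown $\g_s$ for the top tangential derivative (direct energy estimates sufficing for $\a_1\le s-1$), control the weighted $L^\infty$ quantities for $1\le|\a|\le 2$ by the parabolic maximum principle, bootstrap to an $\ep$-independent lifespan, and pass to the limit. The only point to watch is your claim that the $\ep^2\p_x^2$ term ``never enters the right-hand side'': it fails to commute with the nonlinear cancellation and produces the extra commutator term $2\ep^2\{\p_x^{s+1}(\u-U)-\frac{\p_x\w}{\w}\p_x^s(\u-U)\}\p_x a^\ep$ in the $\g_s$-equation, which must be absorbed by part of the $\ep^2\|\y^\ga\p_x\g_s\|_{L^2}^2$ dissipation, exactly as the paper does in estimating $K_3$.
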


\begin{remark}
Defined the space
$$
\begin{aligned}
H^{s,\ga}_{\si,\d_0}:=
&\{w:\T\times \mathbb{R}^+\rightarrow \mathbb{R}:
      \|w\|_{H^{s,\ga}}^2<+\infty,
      \ \sum_{|\a| \le 2}|\y^{\si+\a_2} D^\a w|^2 \le \frac{1}{\d_0^2},
      ~\y^{\si}w\ge \d_0
\},
\end{aligned}
$$
Masmoudi and Wong \cite{Masmoudi-Wong} established the local-in-time well-posedness theory
for the Prandtl equations \eqref{Prandtl-eq} if the initial vorticity $w_0$ belongs to
$H^{s,\ga}_{\si, 2\d_0}$ instead of $\widetilde{H}^{s,\ga}_{\si, 2\d_0}$(required in Theorem \ref{Main-Result}).
In other words, they required the initial vorticity itself, first and second order derivatives
with weight in $L^\inf-$norm should be controlled by $\d_0^{-1}$
rather than being sufficiently large.
\end{remark}

\begin{remark}
When the well-posedness for the Prandtl equations \eqref{Prandtl-eq} in the $H^{4,\ga}$-framework,
Masmoudi and Wong \cite{Masmoudi-Wong} required the condition \eqref{initial-data-2} additionally,
which we do not need in Theorem \ref{Main-Result}.
\end{remark}

We now explain main difficulties of proving Theorem \ref{Main-Result} as well as our
strategies for overcoming them. In order to solve the Prandtl equations \eqref{Prandtl-eq}
in certain $H^s$ Sobolev space, the main difficulty comes from the vertical velocity
$v=-\p_y^{-1}\p_x u$ creates a loss of tangential derivative, so the standard energy
methods can not apply directly.
The main idea of establishing the well-posedness of Prandtl equations \eqref{Prandtl-eq}
is to apply the so-called vanishing viscosity and nonlinear cancellation methods.
To this end, we consider the following approximate system(or regularized Prandtl equations cf.\cite{Masmoudi-Wong}):
\begin{equation}\label{approximate-u}
\left\{
\begin{aligned}
&\p_t \u+\u \p_x \u+\v \p_y \u-\ep^2 \p_x^2 \u-\p_y^2 \u+\p_x p^\ep=0,\\
&\p_x \u+\p_y \v=0,\\
&\u|_{t=0}=u_0,\\
&\u|_{y=0}=\v|_{y=0}=0, \quad \underset{y\rightarrow +\infty}{\lim}\u(t, x, y)=U(t,x),
\end{aligned}
\right.
\end{equation}
for any $\ep>0$. Here the quantities $p^\ep$ and $U$ satisfy a regularized Bernoulli's law:
\begin{equation}\label{Bernoulli-eq}
\p_t U+U\p_x U=\ep^2 \p_x^2 U-\p_x p^\ep.
\end{equation}

We point out that the approximate system \eqref{approximate-u} will turn into
the original Prandtl equations \eqref{Prandtl-eq} as the parameter
$\ep$ tends to zero.
For any $\ep>0$, the local-in-time well-posedness of approximate system \eqref{approximate-u}
is obtained easily in life time $[0, T^\ep]$($T^\ep$ may depends on parameter $\ep$), and hence,
we hope to prove that the solution of \eqref{approximate-u}
in life time $[0, T^\ep]$ will converge to the solution of \eqref{Prandtl-eq} when artificial
viscosity $\ep$ tends to zero.
For this purpose, we need to prove the time of existence $T^\ep$ stays bounded away from zero.
Since the domain considered in this article is periodic, the main part of the boundary layer
will vanish or being stable.
Thus, the main difficulty to prove $T^\ep$ staying bounded away from zero arises from
the vertical velocity $\v$ .
This can be overcame by the nonlinear cancellation methods developed in \cite{Masmoudi-Wong}.
Here we also mention that the readers interested in the vanishing viscosity limit
for incompressible Navier-Stokes equations can refer to
\cite{{Xiao-Xin},{Masmoudi-Rousset},{Masmoudi-Rousset2}} and references therein.

Now, let us explain the main idea to prove $T^\ep$ staying bounded away from zero
and the main novelty to relax to additional conditions \eqref{initial-data-1} and \eqref{initial-data-2}
required in \cite{Masmoudi-Wong}.
First of all, we derive the weighted $L^2$ estimate for $D^\a \w$ for $|\a|\le s$ and $\a_1 \le s-1$.
This works since we are allowed to loss at least one $x-$regularity in these cases.
Secondly, we introduce the quantity $\g_s$(cf.\cite{Masmoudi-Wong}):
\begin{equation}
\g_s:=\p_x^s \w-\frac{\p_y \w}{\w}\p_x^s(\u-U)
\end{equation}
and the weighted norm
\begin{equation}\label{Hg}
\|\w\|_{H^{s,\ga}_g(\T\times \mathbb{R}^+)}^2
:=\|\y^{\ga}\g_s\|_{L^2(\T\times \mathbb{R}^+)}^2
+\sum_{\substack{ |\alpha| \le s \\ \a_1 \le s-1}}
\|\y^{\ga+\a_2} D^\a \w\|_{L^2(\T\times \mathbb{R}^+)}^2,
\end{equation}
provided $\w:=\p_y \u>0$.
As mentioned in \cite{Masmoudi-Wong}, this quantity $\g_s$ can avoid the loss of $x-$derivative
by the nonlinear cancellation; in other words, the quantity $\g_s$ in $L^2$-norm
will have uniform bound independent of $\ep$.
Without the additional condition \eqref{initial-data-1}(required in \cite{Masmoudi-Wong}),
we need to control the quantity $\y^{\si+\a_2}D^{\a}\w(1\le |\a|\le 2)$ in $L^\infty-$norm to close the estimate.
Due to the weight index $\si >\ga+\frac{1}{2}$, it is not easy to apply the quantity $\|\w\|_{H^{s,\ga}_g}$
to control these quantities by the Sobolev inequality.
Define
\begin{equation*}
\|\w(t)\|_{\mathcal{B}^{s,\ga,\si}_g}^2
:=\|\w(t)\|_{H^{s,\ga}_g}^2+\sum_{1\le |\a|\le 2}\|\y^{\si+\a_2}D^\a \w(t)\|_{L^\inf}^2,
\end{equation*}
we may apply the maximum principle of heat equation to control quantities
$\y^{\si+\a_2}D^{\a}\w(1\le |\a|\le 2)$ in $L^\infty-$norm
by $\|\w(t)\|_{\mathcal{B}^{s,\ga,\si}_g}^2$,
initial and boundary data, which can be controlled by the quantity $\|\w\|_{H^{s, \ga}_g}$
after using the Sobolev inequality.
An important remark is that $\|\w(t)\|_{\mathcal{B}_g^{s,\ga,\si}}$ is equivalent to
$\|\w(t)\|_{\mathcal{B}^{s,\ga,\si}}$(see \eqref{XY-norm} and \eqref{YX-norm}),
which has uniform bound on the life span time $[0, T^\ep]$(see Lemma \ref{Existence-Fixed} in Appendix \ref{appendixC}).
Thus, based on the estimates obtained above, we can choose the life span time $T_a$
independent of $\ep$ such the quantity $\|\w(t)\|_{\mathcal{B}^{s,\ga,\si}}$ has
uniform bound on $[0, T_a]$.
Then we can pass the limit $\ep\rightarrow 0^+$ and obtain the existence
and uniqueness of solution to the original Prandtl equations \eqref{Prandtl-eq}
by the approximate system \eqref{approximate-u}.
The weighed norm $\|\cdot\|_{\mathcal{B}_g^{s,\ga,\si}}$ is not only equivalent
to norm $\|\cdot\|_{\mathcal{B}^{s,\ga,\si}}$, but also avoids the loss of tangential
derivatives without the condition \eqref{initial-data-1}.
This is the main novelty in our paper and help us improve the recent result \cite{Masmoudi-Wong}.
But we should point out that the idea, which overcomes the loss of tangential derivative
arising by vertical velocity $v=-\p_y^{-1}\p_x u$, comes from the nonlinear cancellation method
developed in \cite{Masmoudi-Wong}.

The rest of this paper is organized as follows.
In Section \ref{a priori estimate}, one establishes the a priori estimates for
the approximate system \eqref{approximate-u}.
Some useful inequalities and important equivalent relations
will be stated in Appendixs \ref{appendixA} and \ref{appendixB}.
Before we proceed, let us comment on our notation.
Through this paper, all constants $C$ may be different from line to line.
Subscript(s) of a constant illustrates the dependence of the constant, for example,
$C_s$ is a constant depending on $s$ only.
Denote by $\p_y^{-1}$ the inverse of the derivative $\p_y$, i.e.,
$(\p_y^{-1}f)(y):=\int_0^y f(z)dz$.

\section{A priori estimates}\label{a priori estimate}

In this section, we will derive a priori estimates(independent of $\ep$), which are crucial
to prove the local-in-time well-posedness theory of solutions to original Prandtl equations \eqref{Prandtl-eq}.
Denote vorticity $\w:=\p_y \u$, using the the regularized Prandtl equations \eqref{approximate-u},
we find that this vorticity satisfies the following evolution equations:
\begin{equation}\label{re-Prandtl-w}
\left\{
\begin{aligned}
&\p_t \w+\u \p_x \w+\v \p_y \w=\ep^2 \p_x^2 \w+\p_y^2 \w,\\
&\w|_{t=0}=w_0:=\p_y u_0,\\
&\p_y \w|_{y=0}=\p_x p^\ep,
\end{aligned}
\right.
\end{equation}
where the velocity field $(\u, \v)$ is given by
\begin{equation}\label{re-Prandtl-u}
\u(t, x, y):=U(t, x)-\int_y^{+\inf}\w(t, x, \eta)d\eta,
\end{equation}
and
\begin{equation}\label{re-Prandtl-v}
\v(t, x, y):=-\int_0^y \p_x \u(t, x, \eta)d\eta.
\end{equation}
Next, we derive a life existence time $T_a$(independent of $\ep$) such the
quantity $\|\w(t)\|_{\mathcal{B}^{s,\ga,\si}}$ owning a uniform bound.
More precisely, we have the following results.

\begin{theorem}[a priori estimates]\label{main-thereom}
Let $s \ge 4$ be an even integer, $\gamma \ge1, \sigma> \gamma+\frac{1}{2}, \d_0 \in (0, \frac{1}{2})$,
and $\ep \in (0, 1]$, the smooth solution $(\u, \v, \w)$, defined on $[0, T^\ep]$,
to the regularized Prandtl equations \eqref{re-Prandtl-w}-\eqref{re-Prandtl-v}.
Under the assumptions of Theorem \ref{Main-Result}, there exists a time
$T_a:=T_a(s, \ga, \si, \d_0, \|w_0\|_{\mathcal{B}^{s,\ga,\si}}, M_U)>0$
independent of $\ep$ such the following estimates hold on
\begin{equation}\label{eq-o}
\O(t):=\underset{0\le \t \le t}{\sup}{\|\w(\t)\|_{\mathcal{B}^{s,\ga,\si}}^2}
\le {C}_{s,\ga,\si} \{1+\|w_0\|_{\mathcal{B}^{s,\ga,\si}}^8+M_U^4\},
\end{equation}
and
\begin{equation}\label{eq-w}
\underset{\T \times \mathbb{R}^+}{\min}\y^{\si}\w(t,x,y) \ge \d_0,
\end{equation}
for all $t \in [0, \min(T_a, T^\ep)]$.
\end{theorem}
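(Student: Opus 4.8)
The plan is to run a nonlinear energy estimate on the weighted quantities, organized in three layers, and then close a continuity (bootstrap) argument with a time $T_a$ chosen to absorb all nonlinear terms.

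\textbf{Step 1: Weighted $L^2$ estimates for $D^\a\w$ with $|\a|\le s$ and $\a_1\le s-1$.}
First I would apply $D^\a$ to the vorticity equation \eqref{re-Prandtl-w}, multiply by $\y^{2(\ga+\a_2)}D^\a\w$, and integrate over $\T\times\mathbb{R}^+$. The diffusion terms $\ep^2\p_x^2\w+\p_y^2\w$ produce the good dissipation $\ep^2\|\y^{\ga+\a_2}D^\a\p_x\w\|_{L^2}^2+\|\y^{\ga+\a_2}D^\a\p_y\w\|_{L^2}^2$ plus commutator terms from moving the weight through $\p_y^2$, which are lower order and controlled by Hardy-type inequalities (Appendix \ref{appendixA}). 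The transport terms $\u\p_x\w+\v\p_y\w$ are the delicate ones: after integration by parts in $x$ the top-order contribution $\int \v\,\p_y D^\a\w\,D^\a\w\,\y^{2(\ga+\a_2)}$ is handled by integrating by parts in $y$ using $\p_y\v=-\p_x\u$ and the boundary condition $\v|_{y=0}=0$; the remaining commutators $[D^\a,\u]\p_x\w$ and $[D^\a,\v]\p_y\w$ distribute at most $s$ derivatives, and the only genuinely dangerous term is the one where all derivatives land on $\v$, i.e.\ $\p_x^s\u$, which costs $s+1$ $x$-derivatives on $\u$ — but since here $\a_1\le s-1$ we only ever need $\p_x^s\u$, one $x$-derivative less than would be fatal, so Sobolev embedding and the definition of $\|\w\|_{H^{s,\ga}_g}$ (which controls $\y^{\ga+\a_2}D^\a\w$ for these multi-indices) close it. The net output is a differential inequality $\frac{d}{dt}\sum\|\y^{\ga+\a_2}D^\a\w\|_{L^2}^2 \lesssim P\big(\|\w\|_{\mathcal{B}^{s,\ga,\si}},M_U\big)$ for a polynomial $P$, with all $\ep$-dependence in the sign-definite dissipation.

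\textbf{Step 2: The cancellation estimate for $\g_s$.}
For the top tangential mode $\a=(s,0)$ the above fails — $\p_x^{s+1}\u$ appears — so I would instead derive the evolution equation for $\g_s=\p_x^s\w-\frac{\p_y\w}{\w}\p_x^s(\u-U)$, following \cite{Masmoudi-Wong}. The point of the subtraction is that the worst term in $\p_t\p_x^s\w$, namely $-\p_x^s\v\,\p_y\w = (\p_y^{-1}\p_x^{s+1}\u)\p_y\w$, is exactly cancelled by the corresponding term produced when $\p_t$ hits $\p_x^s(\u-U)$ in the second piece of $\g_s$ (using the $\u$-equation to rewrite $\p_t\p_x^s\u$). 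What survives is a transport–diffusion equation for $\g_s$ with coefficients built from $\w,\p_y\w,\p_y^2\w,1/\w$ and lower-order derivatives, plus forcing from $\p_x p^\ep$ (controlled by $M_U$ via \eqref{Bernoulli-eq}), and from the $\ep^2\p_x^2$ regularization which contributes additional commutators that are however uniformly bounded because $\ep\le1$ and the extra $x$-derivatives they carry are matched by the dissipation. Multiplying by $\y^{2\ga}\g_s$ and integrating gives $\frac{d}{dt}\|\y^\ga\g_s\|_{L^2}^2\lesssim P\big(\|\w\|_{\mathcal{B}^{s,\ga,\si}},M_U\big)$; combined with Step 1 this yields $\frac{d}{dt}\|\w\|_{H^{s,\ga}_g}^2\lesssim P(\|\w\|_{\mathcal{B}^{s,\ga,\si}},M_U)$, and since $\|\cdot\|_{\mathcal{B}^{s,\ga,\si}_g}$ and $\|\cdot\|_{\mathcal{B}^{s,\ga,\si}}$ are equivalent (the cited \eqref{XY-norm}, \eqref{YX-norm}), the lower-order weight $1/\w\sim\y^\si/\d_0$ stays bounded precisely because of \eqref{eq-w}.

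\textbf{Step 3: $L^\infty$ control of $\y^{\si+\a_2}D^\a\w$ for $1\le|\a|\le2$, and the lower bound \eqref{eq-w}.}
Here is where the present paper departs from \cite{Masmoudi-Wong}: rather than assuming these $L^\infty$ quantities small initially, I would note each $D^\a\w$ ($1\le|\a|\le2$) solves a scalar convection–diffusion equation $\p_t(D^\a\w)+\u\p_x(D^\a\w)+\v\p_y(D^\a\w)-\ep^2\p_x^2(D^\a\w)-\p_y^2(D^\a\w)=F_\a$, whose source $F_\a$ is made of products of lower-order derivatives of $\w$ already controlled in $L^\infty$ by Sobolev embedding from Steps 1–2. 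Conjugating by the weight $\y^{\si+\a_2}$ produces a drift and a zeroth-order term that the weighted maximum principle for the heat/convection-diffusion operator can absorb, so $\|\y^{\si+\a_2}D^\a\w(t)\|_{L^\infty}$ is bounded by its initial value, the boundary contribution at $y=0$ (controlled through $\p_x p^\ep$ and hence $M_U$), and $\int_0^t$ of a polynomial in $\|\w\|_{\mathcal{B}^{s,\ga,\si}}$. The lower bound $\y^\si\w\ge\d_0$ follows similarly: $\y^\si\w$ satisfies a convection–diffusion inequality with a controlled zeroth-order coefficient, and since it starts $\ge2\d_0$ it cannot cross $\d_0$ before a time depending only on the data — this is also a weighted minimum principle argument, and it is what legitimizes the $1/\w$ weights used throughout Steps 1–2.

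\textbf{Closing the argument.}
Assembling Steps 1–3, I get $\frac{d}{dt}\O(t)\le C_{s,\ga,\si}\,Q(\O(t),M_U)$ for a fixed polynomial $Q$, together with $\O(0)=\|w_0\|_{\mathcal{B}^{s,\ga,\si}}^2$ and the lower bound persisting on a data-dependent interval. A standard ODE comparison then gives a time $T_a=T_a(s,\ga,\si,\d_0,\|w_0\|_{\mathcal{B}^{s,\ga,\si}},M_U)$, manifestly independent of $\ep$, on which $\O(t)\le C_{s,\ga,\si}\{1+\|w_0\|_{\mathcal{B}^{s,\ga,\si}}^8+M_U^4\}$ and $\min\y^\si\w\ge\d_0$ hold for $t\in[0,\min(T_a,T^\ep)]$; the exponents $8$ and $4$ come from tracking how many times the nonlinearity self-composes through Steps 1–3. \textbf{The main obstacle} I anticipate is Step 2: making the $\g_s$-cancellation fully rigorous in the presence of the extra $\ep^2\p_x^2$ term — one must verify that every commutator generated by the regularization either is absorbed by the $\ep^2$-dissipation or is genuinely lower order with an $\ep$-uniform bound — and simultaneously checking that the weight $\y^\ga$ (rather than $\y^{\si}$) is exactly the one for which the $\g_s$ energy identity has no uncontrolled boundary term at $y=0$; the interplay of the two different weight exponents $\ga$ and $\si$, and the equivalence of the two $\mathcal{B}$-norms, is the technical heart of the improvement over \cite{Masmoudi-Wong}.
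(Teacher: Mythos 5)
Your proposal follows essentially the same route as the paper: weighted $L^2$ energy estimates for $D^\a \w$ with $\a_1\le s-1$ (Lemma \ref{basic-lemma1}), the Masmoudi--Wong cancellation via $\g_s$ for the top tangential mode (Lemma \ref{basic-lemma2}), the parabolic maximum principle for the weighted $L^\infty$ quantities $Q(t)$ (Lemma \ref{basic-lemma3}) as the key novelty replacing the smallness hypothesis of \cite{Masmoudi-Wong}, and a bootstrap on $\O(t)$ using the equivalence \eqref{XY-norm}--\eqref{YX-norm}. The only small deviation is the lower bound \eqref{eq-w}: you invoke a weighted minimum principle, whereas the paper more simply bounds $\|\y^\si\p_t\w\|_{L^\inf}$ from the equation and integrates in time; both work.
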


\begin{remark}
After having the results in Theorem \ref{main-thereom} at hand,
we can pass to the limit $\ep \rightarrow 0^+$ in the regularized Prandtl
equations \eqref{approximate-u}$_1$-\eqref{approximate-u}$_4$ and the regularized
Bernoulli's law \eqref{Bernoulli-eq}.
Thus, it is easy to check that the limit functions $(u, v)$ will solve the
original Prandtl equations \eqref{Prandtl-eq} with the Bernoulli's law \eqref{o-Bernoulli}
in the classical sense(cf.\cite{Masmoudi-Wong}).
On the other hand, the uniqueness of Prandtl equations \eqref{Prandtl-eq}
has already been derived in \cite{Masmoudi-Wong} without the condition \eqref{initial-data-1}.
In other words, the local-in-time well-posedness theory of solutions to the Prandtl equations \eqref{Prandtl-eq}
in Theorem \ref{Main-Result} is a direct consequence of Theorem \ref{main-thereom}.
\end{remark}

Throughout this section, for any small constant $\d \in (0, \frac{1}{2})$, we assume
a priori assumption
\begin{equation}\label{w-d}
\y^{\si}\w(t, x, y)\ge \d, \quad \forall (t, x, y)\in [0, T^\ep]\times \T \times \mathbb{R}^+,
\end{equation}
holds on. Let us define
\begin{equation}\label{Og}
\|\w(t)\|_{\mathcal{B}^{s,\ga,\si}_g}^2:=\|\w(t)\|_{H^{s,\ga}_g}^2+Q(t),
\quad {\rm and}\quad
\O_g(t):=\underset{0\le \t \le t}{\sup}\|\w(\t)\|_{\mathcal{B}^{s,\ga,\si}_g}^2,
\end{equation}
where $Q(t)$ is defined by
\begin{equation}\label{Q}
Q(t):=\sum_{1\le |\a|\le 2}\|\y^{\si+\a_2}D^\a \w(t)\|_{L^\inf}^2.
\end{equation}
\subsection{Weighted Energy Estimates}

In this subsection, we will derive the uniform weighted estimates for the vorticity $\w$,
which plays an important role for us to find the uniform existence life time.
Since one order tangential derivative loss is allowed, we may apply the energy method
to establish the weighted estimates for the vorticity $D^\a \w(|\a|\le s, \a_1 \le s-1)$.

\begin{lemma}\label{basic-lemma1}
Under the hypotheses of Theorem \ref{main-thereom}, we have the following estimate:
\begin{equation*}
\begin{aligned}
&\frac{d}{dt}\sum_{\substack{ |\alpha| \le s \\ \a_1 \le s-1}}
\|\y^{\ga+\a_2} D^\a \w\|_{L^2}^2
+\sum_{\substack{ |\alpha| \le s \\ \a_1 \le s-1}}
\|\y^{\ga+\a_2} (\ep \p_x D^\a \w, \p_y D^\a \w)\|_{L^2}^2\\
\le
&C_{s,\ga,\si,\d}\|\p_x^{s+1} U\|_{L^\inf(\T)}^8
+C_{s,\ga,\si,\d}(1+\|\y^{\si+1}\p_y \w\|_{L^\inf}^8+\|\w\|_{H^{s,\ga}_g}^8)\\
&+C_{s,\ga}(1+\|\w\|_{H^{s,\ga}_g})^{s-2}\|\w\|_{H^{s,\ga}_g}^2
+C_s \sum_{k=0}^{s/2}\|\p_t^k \p_x p^\ep\|_{H^{s-2l}(\T)}^2,
\end{aligned}
\end{equation*}
where the positive constants $C_s, C_{s,\ga}$ and $C_{s, \ga, \si, \d}$ are independent of $\ep$.
\end{lemma}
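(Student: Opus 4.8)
\textbf{Proof plan for Lemma \ref{basic-lemma1}.}
The plan is to run a weighted energy estimate directly on the vorticity equation \eqref{re-Prandtl-w}. For a multi-index $\a$ with $|\a|\le s$ and $\a_1\le s-1$, apply $D^\a$ to \eqref{re-Prandtl-w}, multiply by $\y^{2(\ga+\a_2)}D^\a\w$, and integrate over $\T\times\mathbb{R}^+$. The diffusion terms $\ep^2\p_x^2\w+\p_y^2\w$ produce, after integration by parts, the good dissipation term $\|\y^{\ga+\a_2}(\ep\p_xD^\a\w,\p_yD^\a\w)\|_{L^2}^2$ on the left, together with commutator terms coming from the weight $\y^{2(\ga+\a_2)}$ hitting $\p_y$; these commutators are lower order in $y$-derivatives and are absorbed using the Hardy-type inequalities from Appendix \ref{appendixA} and a small fraction of the dissipation. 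The boundary term at $y=0$ from $\p_y^2\w$ is handled by the Neumann condition $\p_y\w|_{y=0}=\p_x p^\ep$ in \eqref{re-Prandtl-w}, which after applying $D^\a$ and trace/interpolation estimates contributes the pressure term $\sum_{k}\|\p_t^k\p_x p^\ep\|_{H^{s-2k}(\T)}^2$; time derivatives of $p^\ep$ enter because, when $\a_2$ is large, one trades $y$-derivatives for $t$-derivatives via the equation itself.

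The bulk of the work is estimating the transport contributions $\int \y^{2(\ga+\a_2)}D^\a(\u\p_x\w+\v\p_y\w)\,D^\a\w$. I would expand by Leibniz and split into the ``top-order'' pieces where all derivatives land on $\w$ (or on $\u,\v$) and the remaining ``lower-order'' multilinear pieces. The key structural point is that since $\a_1\le s-1$, at least one $x$-derivative is always available to be integrated by parts or to be placed on a factor that is controlled in $L^\infty$, so the notorious derivative loss from $\v=-\p_y^{-1}\p_x\u$ does not occur in these cases (the genuinely critical case $\a=(s,0)$ is excluded here and handled separately via $\g_s$). Using $\u=U-\int_y^\infty\w$, $\v=-\int_0^y\p_x\u$, the divergence-free structure $\p_x\u+\p_y\v=0$ to symmetrize the transport term, and repeated use of the weighted Sobolev and Hardy inequalities from Appendices \ref{appendixA}--\ref{appendixB}, each multilinear term is bounded by a product of: some power of $\|\w\|_{H^{s,\ga}_g}$, the $L^\infty$ quantity $\|\y^{\si+1}\p_y\w\|_{L^\infty}$ (which arises precisely from the factor $\p_y\w/\w$ hidden in comparisons with $\g_s$, and from $\v\p_y\w$ when the weight exponent must be pushed up), and powers of $\|\p_x^{s+1}U\|_{L^\infty(\T)}$ coming from the outer-flow part of $\u$. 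Polynomial bookkeeping — counting how many factors can simultaneously carry high derivatives versus weights — yields the stated powers ($8$ on the $L^\infty$ and $H^{s,\ga}_g$ terms, and the $(1+\|\w\|_{H^{s,\ga}_g})^{s-2}$ factor from the worst fully-nonlinear product with $s$ factors).

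The main obstacle I anticipate is the weight accounting: the norm $\|\cdot\|_{H^{s,\ga}_g}$ carries the weight $\y^{\ga+\a_2}$, but the transport term $\v\p_y\w$ and the nonlinear terms involving $\int_y^\infty\w$ naturally generate a higher weight power, and one must verify — using $\si>\ga+\tfrac12$ and the extra decay encoded in $\|\y^{\si+1}\p_y\w\|_{L^\infty}$ together with the a priori lower bound \eqref{w-d} — that every term can be re-expressed with exactly the admissible weight without losing integrability near $y\to\infty$. A secondary technical point is the treatment of the ``mixed'' multi-indices with $\a_1\le s-1$ but $|\a|=s$ and $\a_2\ge1$: here one $x$-integration by parts moves a derivative onto $\v$, producing $\p_x\v=-\int_0^y\p_x^2\u$, and one must check this still closes without touching the $\g_s$-controlled quantity. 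Once all transport terms are bounded as above, summing over the admissible $\a$ and collecting the dissipation on the left gives the claimed differential inequality, with all constants manifestly independent of $\ep$ since every use of the $\ep^2\p_x^2$ terms is either favorable (sign-definite) or produces a factor $\ep\le1$.
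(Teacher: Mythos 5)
Your plan follows essentially the same route as the paper's proof: a weighted energy estimate on $D^\a \w$ for $|\a|\le s$, $\a_1\le s-1$, with the weight commutator handled via the divergence-free structure and $\|\frac{\v}{1+y}\|_{L^\inf}$, the Leibniz remainders via Moser/Hardy/Sobolev--Wirtinger inequalities, and the boundary term via the Neumann condition $\p_y\w|_{y=0}=\p_x p^\ep$ together with trading $y$-derivatives for $t$-derivatives through the equation. The only discrepancy is bookkeeping: in the paper the factor $(1+\|\w\|_{H^{s,\ga}_g})^{s-2}$ comes from the boundary term (the quantity $H(t)$, cited from Masmoudi--Wong), not from the transport nonlinearities, which only produce cubic terms; the eighth powers then arise from Young's inequality after converting $\|\w\|_{H^{s,\ga}}$ into $\|\w\|_{H^{s,\ga}_g}$ via the equivalence \eqref{b21}.
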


\begin{proof}
Differentiating the vorticity equation \eqref{re-Prandtl-w} with respect to $x~\a_1$ times and $y~\a_2$ times,
and multiplying the resulting equality by $\y^{2\ga+2\a_2}D^\a \w$,
we get after integrating over $\T\times \mathbb{R}^+$,
\begin{equation}\label{l21}
\frac{1}{2}\frac{d}{dt}\|\y^{\ga+\a_2}D^\a \w\|_{L^2}^2
+\ep^2 \|\y^{\ga+\a_2}\p_x D^\a \w\|_{L^2}^2
=J_1+J_2+J_3+J_4,
\end{equation}
where $J_1, J_2, J_3$ and $J_4$ are defined by
\begin{equation*}
\begin{aligned}
&J_1=\int_{\T\times \mathbb{R}^+} \y^{2\ga+2\a_2} D^\a \w \p_y^2 D^\a \w dxdy,\\
&J_2=(\ga+\a_2)\int_{\T\times \mathbb{R}^+} \y^{2\ga+2\a_2-1}\v |D^\a \w|^2dxdy,\\
&J_3=-\sum_{0<\beta \le \a}\binom{\a}{\beta}
      \int_{\T\times \mathbb{R}^+} \y^{2\ga+2\a_2}D^\a \w \cdot D^\beta \u \p_x D^{\a-\beta}\w dxdy,\\
&J_4=-\sum_{0<\beta \le \a}\binom{\a}{\beta}
      \int_{\T\times \mathbb{R}^+} \y^{2\ga+2\a_2}D^\a \w \cdot D^\beta \v \p_y D^{\a-\beta}\w dxdy.
\end{aligned}
\end{equation*}

First of all, integrating by part and applying the Cauchy inequality, we get
\begin{equation*}
J_1\le -\frac{3}{4}\|\y^{\ga+\a_2}\p_y D^\a \w\|_{L^2}^2
    -\int_\T D^\a \w \p_y D^\a \w|_{y=0}dx+C_{s,\ga}\|\w\|_{H^{s,\ga}}^2.
\end{equation*}
Next, integrating by part and applying the divergence-free condition, it follows
\begin{equation*}
|J_2|\le C_{s, \ga}\|\frac{\v}{1+y}\|_{L^\inf}\|\y^{\ga+\a_2}D^\a \w\|_{L^2}^2.
\end{equation*}
Using the divergence-free condition, Sobolev and Hardy inequalities, we get for $\ga \ge 1$
\begin{equation}\label{l2a}
\begin{aligned}
\|\frac{\v}{1+y}\|_{L^\inf}
\le
& \|\frac{\v+y \p_x U}{1+y}\|_{L^\inf}+\|\frac{y \p_x U}{1+y}\|_{L^\inf}\\
\le
&C\{\|\frac{\v+y \p_x U}{1+y}\|_{L^2}+\|\frac{\p_x \v+y \p_x^2 U}{1+y}\|_{L^2}
    +\|\p_y^2\{\frac{\v+y \p_x U}{1+y}\}\|_{L^2}\}
    +\|\frac{y \p_x U}{1+y}\|_{L^\inf}\\
\le
&C(\|\p_x \w\|_{L^2}+\|\p_x(\u-U)\|_{L^2}+\|\p_x^2(\u-U)\|_{L^2})+\|\p_x U\|_{L^\inf(\T)}\\
\le
& C(\|\w\|_{H^{s,\ga}}+\|\p_x U\|_{L^\inf(\T)}),
\end{aligned}
\end{equation}
and hence, $J_2$ can be estimated as
\begin{equation*}
|J_2|\le C_{s,\ga}(\|\p_x U\|_{L^\inf(\T)}+\|\w\|_{H^{s,\ga}})\|\w\|_{H^{s,\ga}}^2.
\end{equation*}

Deal with term $J_3$.
Using the H\"{o}lder inequality, we get for $0<\beta \le \a$
\begin{equation}\label{l22}
\begin{aligned}
&|\int  \y^{2\ga+2\a_2}D^\a \w \cdot D^\beta \u \p_x D^{\a-\beta}\w dxdy|\\
\le
&\|\y^{\ga+\a_2} D^\beta (\u-U) D^{\a+e_1-\beta}\w\|_{L^2} \|\y^{\ga+\a_2} D^\a \w\|_{L^2}\\
&+\|\y^{\ga+\a_2} \p_x^{\beta_1}  U \p_x^{\a_1+1-\beta_1}\p_y^{\a_2}\w\|_{L^2} \|\y^{\ga+\a_2} D^\a \w\|_{L^2}.
\end{aligned}
\end{equation}
Using the Moser and Hardy inequalities, it follows
\begin{equation}\label{l23}
\|\y^{\ga+\a_2} D^\beta (\u-U) D^{\a+e_1-\beta}\w\|_{L^2}
\le C_{s,\ga}\|\w\|_{H^{s,\ga}}^2.
\end{equation}
Applying the Sobolev and Wirtinger inequalities, we get
\begin{equation}\label{l24}
\|\y^{\ga+\a_2} \p_x^{\beta_1}  U \p_x^{\a_1+1-\beta_1}\p_y^{\a_2}\w\|_{L^2}
\le C_s \|\p_x^s U\|_{L^\inf(\T)}\|\w\|_{H^{s,\ga}}.
\end{equation}
Substituting estimates \eqref{l23} and \eqref{l24} into \eqref{l22}, we obtain
\begin{equation*}
|J_3|\le C_{s,\ga}(\|\p_x^s U\|_{L^\inf(\T)}+\|\w\|_{H^{s,\ga}})\|\w\|_{H^{s,\ga}}^2.
\end{equation*}

Deal with the term $J_4$.
Using the H\"{o}lder inequality, we get for $0<\beta \le \a$
\begin{equation}\label{l25}
\begin{aligned}
&|\int_{\T\times \mathbb{R}^+} \y^{2\ga+2\a_2}D^\a \w \cdot D^\beta \v \p_y D^{\a-\beta}\w dxdy|\\
\le
&\|\y^{\ga+\a_2}  D^\beta (\v+y\p_x U)  D^{\a+e_2-\beta}\w\|_{L^2}\|\y^{\ga+\a_2} D^\a \w\|_{L^2}\\
&+\|\y^{\ga+\a_2}  D^\beta (y \p_x U) D^{\a+e_2-\beta}\w\|_{L^2}\|\y^{\ga+\a_2} D^\a \w\|_{L^2}.
\end{aligned}
\end{equation}
Using the Sobolev and Wirtinger inequalities, it follows
\begin{equation}\label{l26}
\|\y^{\ga+\a_2}  D^\beta (y \p_x U) D^{\a+e_2-\beta}\w\|_{L^2}
\le C\|\p_x^{s+1} U\|_{L^\inf(\T)}\|\w\|_{H^{s,\ga}}.
\end{equation}

For $|\a|\le s-1$, we apply the Moser and Hardy inequalities to get for $i=1,2$,
\begin{equation}\label{l27}
\begin{aligned}
&\|\y^{\ga+\a_2}  D^\beta (\v+y\p_x U)  D^{\a+e_2-\beta}\w\|_{L^2}\\
\le
&C\|D^{e_i}(\v+y\p_x U)\|_{H^{s-2,i-2}}\|\w\|_{H^{s-1,\ga}}
 \le C\|\w\|_{H^{s,\ga}}^2.
\end{aligned}
\end{equation}
Similar, for $|\a|=s$ and $\a_1 \le s-1$, we get for $\beta_2 \ge 1$
\begin{equation}\label{l28}
\begin{aligned}
&\|\y^{\ga+\a_2}  D^\beta (\v+y\p_x U)  D^{\a+e_2-\beta}\w\|_{L^2}\\
=
&\|\y^{\ga+\a_2}  \p_x^{\beta_1}\p_y^{\beta_2-1} \p_x(\u- U)\p_x^{\a_1-\beta_1}\p_y^{\a_2-\beta_2}\p_y \w\|_{L^2}
\le C \|\w\|_{H^{s,\ga}}^2,
\end{aligned}
\end{equation}
and for $\beta_2 =0$
\begin{equation}\label{l29}
\begin{aligned}
&\|\y^{\ga+\a_2}  D^\beta (\v+y\p_x U)  D^{\a+e_2-\beta}\w\|_{L^2}\\
=
&\|\y^{\ga+\a_2} \p_x^{\beta_1} (\v+y\p_x U) \p_x^{\a_1-\beta_1}\p_y^{\a_2-1} \p_y \w\|_{L^2}\\
\le
&\|\v+y\p_x U\|_{H^{s-1,-1}}\|\w\|_{H^{s,\ga}}
\le C\|\w\|_{H^{s,\ga}}^2.
\end{aligned}
\end{equation}
Substituting the estimates \eqref{l26}-\eqref{l29} into \eqref{l25}, it follows
\begin{equation*}
|J_4|\le C_s (\|\p_x^{s+1} U\|_{L^\inf(\T)}+\|\w\|_{H^{s,\ga}})\|\w\|_{H^{s,\ga}}^2.
\end{equation*}

Plugging the estimates of $J_1$ through $J_4$ into the equality \eqref{l21}, we get
\begin{equation}\label{l210}
\begin{aligned}
&\frac{1}{2}\frac{d}{dt}\|\y^{\ga+\a_2}D^\a \w\|_{L^2}^2
+\ep^2 \|\y^{\ga+\a_2}\p_x D^\a \w\|_{L^2}^2
+\frac{3}{4}\|\y^{\ga+\a_2}\p_y D^\a \w\|_{L^2}^2\\
\le
&-\int_\T D^\a \w \p_y D^\a \w|_{y=0}dx
+C_{s,\ga}(1+\|\p_x^{s+1} U\|_{L^\inf(\T)}+\|\w\|_{H^{s,\ga}})\|\w\|_{H^{s,\ga}}^2,
\end{aligned}
\end{equation}
which, together with the relation \eqref{b21}, yields directly
\begin{equation}\label{l211}
\begin{aligned}
&\frac{1}{2}\frac{d}{dt}\|\y^{\ga+\a_2}D^\a \w\|_{L^2}^2
+\ep^2 \|\y^{\ga+\a_2}\p_x D^\a \w\|_{L^2}^2
+\frac{3}{4}\|\y^{\ga+\a_2}\p_y D^\a \w\|_{L^2}^2\\
\le
&-\int_\T D^\a \w \p_y D^\a \w|_{y=0}dx
+C_{s,\ga,\si,\d}(1+\|\p_x^{s+1} U\|_{L^\inf(\T)}^8+\|\y^{\si+1}\p_y \w\|_{L^\inf}^8+\|\w\|_{H^{s,\ga}_g}^8).
\end{aligned}
\end{equation}

Note the boundary term above can be estimated as follows(cf.\cite{Masmoudi-Wong}) for $|\a|\le s-1$,
\begin{equation}\label{l212}
|\int_\T D^\a \w \p_y D^\a \w|_{y=0}dx|
\le \frac{1}{12}\|\y^{\ga+\a_2+1}\p_y^2 D^\a \w\|_{L^2}^2
     +C\|\w\|_{H^{s,\ga}_g}^2
\end{equation}
and for $|\a|=s$,
\begin{equation}\label{l213}
|\int_\T D^\a \w \p_y D^\a \w|_{y=0}dx|
\le
\left\{
\begin{aligned}
&\frac{1}{12}\|\y^{\ga+\a_2}\p_y D^\a \w\|_{L^2}^2+H(t),\ \a_2=2k, k\in \mathbb{N};\\
&\frac{1}{12}\|\y^{\ga+\a_2+1}\p_x^{\a_1-1}\p_y^{\a_2+2} \w\|_{L^2}^2+H(t),\ \a_2=2k+1, k\in \mathbb{N};
\end{aligned}
\right.
\end{equation}
where $H(t)=C_s \sum_{l=0}^{s/2}\|\p_t^l \p_x p^\ep \|_{H^{s-2l}(\T)}^2
            +C_{s,\ga}(1+\|\w\|_{H^{s,\ga}_g})^{s-2}\|\w\|_{H^{s,\ga}_g}^2$.
Thus, plugging the estimates \eqref{l212} and \eqref{l213} into \eqref{l211}, and summing over $\a$,
we complete the proof of this lemma.
\end{proof}

Next, define $a^\ep:=\frac{\p_y \w}{\w}$ and $\g_s:=\p_x^s \w-a^\ep \p_x^s(\u-U)$,
we are going to derive the $L^2$ estimate for $\y^\ga \g_s$ by using the standard energy methods.
As mentioned in \cite{Masmoudi-Wong}, this quantity $\g_s$ will avoid the loss of $x-$derivative
by a nonlinear cancellation.
By routine checking, it is easy to justify that the quantity $\g_s$ satisfies the evolution equation(cf.\cite{Masmoudi-Wong})
\begin{equation}\label{eq-gs}
\begin{aligned}
&(\p_t +\u \p_x +\v \p_y-\ep^2 \p_x^2-\p_y^2)\g_s\\
=
&2\ep^2\{\p_x^{s+1}(\u-U)-\frac{\p_x \w}{\w}\p_x^s (\u-U)\}\p_x a^\ep
+2\g_s \p_y a^\ep-\g_1 \p_x^s U\\
&-\sum_{j=1}^{s-1} \binom{s}{j}\g_{j+1} \p_x^{s-j}\u
-\sum_{j=1}^{s-1}\binom{s}{j} \p_x^{s-j}\v \{\p_x^j \p_y \w-a^\ep \p_x^j \w\}\\
&+a^\ep \sum_{j=0}^{s-1}\binom{s}{j}\p_x^j(\u-U)\p_x^{s-j+1}U.
\end{aligned}
\end{equation}
where $\g_k:=\p_x^k \w-a^\ep \p_x^k(\u-U)$.

Now, we are going to derive the following weighted energy estimate for $\g_s$:

\begin{lemma}\label{basic-lemma2}
Under the hypothesis of  Theorem \ref{main-thereom}, we have the following estimate:
\begin{equation*}
\begin{aligned}
&\frac{d}{dt}\|\y^\ga \g_s\|_{L^2}^2
 +\ep^2\|\y^\ga \p_x \g_s\|_{L^2}^2
 +\|\y^\ga \p_y \g_s\|_{L^2}^2\\
\le
&C\|\p_x^{s+1}p^\ep\|_{L^2(\T)}^4+C_{s,\ga, \si, \d}\|\p_x^{s+1}U\|_{L^\inf(\T)}^4
+C_{s,\ga, \si, \d}(1+Q^4(t)+\|\w\|_{H^{s,\ga}_g}^6),
\end{aligned}
\end{equation*}
where the quantity $Q(t)$ is defined in \eqref{Q},
the positive constants $C$ and $C_{s, \ga, \si, \d}$ are independent of $\ep$.
\end{lemma}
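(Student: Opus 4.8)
\emph{Proof sketch (plan).} The plan is to run a weighted $L^2$ energy estimate on the transport--diffusion equation \eqref{eq-gs} satisfied by $\g_s$, in the spirit of Lemma \ref{basic-lemma1}, keeping careful track of where the top-order pressure trace $\p_x^{s+1}p^\ep$ and the top-order outer flow $\p_x^{s+1}U$ enter. First I would multiply \eqref{eq-gs} by $\y^{2\ga}\g_s$ and integrate over $\T\times\mathbb{R}^+$. The diffusion terms, after integrating by parts (periodicity in $x$, decay as $y\to+\inf$), produce
\[
\ep^2\|\y^\ga\p_x\g_s\|_{L^2}^2+\|\y^\ga\p_y\g_s\|_{L^2}^2 ,
\]
together with the weight commutator $2\ga\int\y^{2\ga-1}\g_s\p_y\g_s\,dxdy$, which Cauchy's inequality absorbs into $\tfrac1{16}\|\y^\ga\p_y\g_s\|_{L^2}^2$ and $C_\ga\|\y^\ga\g_s\|_{L^2}^2$, and the boundary term $\int_\T\g_s\,\p_y\g_s\big|_{y=0}\,dx$. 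For the convective part $\int\y^{2\ga}\g_s(\u\p_x\g_s+\v\p_y\g_s)$ I would use $\p_x\u+\p_y\v=0$: the leading contributions cancel and only $-\ga\int\y^{2\ga}\frac{\v}{1+y}\g_s^2\,dxdy$ survives, bounded via \eqref{l2a} by $C_{s,\ga}(\|\w\|_{H^{s,\ga}}+\|\p_x U\|_{L^\inf(\T)})\|\y^\ga\g_s\|_{L^2}^2$.

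The boundary term is the place where the pressure surfaces. Using $\u|_{y=0}=0$, hence $\p_x^s(\u-U)|_{y=0}=-\p_x^s U$, together with $\p_y\w|_{y=0}=\p_x p^\ep$, one checks that $\g_s|_{y=0}$ involves only $\p_x^s\w|_{y=0}$, $\p_x^s U$ and the trace $a^\ep|_{y=0}=\p_x p^\ep/\w|_{y=0}$, while $\p_y\g_s|_{y=0}$ additionally contains the term $\p_x^{s+1}p^\ep$. I would then estimate the traces of $\w$ by the one-dimensional (in $y$) trace inequality together with Hardy's inequality, converting $\|\p_x^s\w|_{y=0}\|_{L^2(\T)}$ and similar quantities into a small multiple of $\|\y^\ga\p_y\g_s\|_{L^2}$ plus quantities controlled by $\|\w\|_{H^{s,\ga}_g}$; the residual pairing of $\g_s|_{y=0}$ against $\p_x^{s+1}p^\ep$ is closed by Cauchy--Schwarz and Young, which is what generates the term $C\|\p_x^{s+1}p^\ep\|_{L^2(\T)}^4$.

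Next I would test each of the seven groups on the right-hand side of \eqref{eq-gs} against $\y^{2\ga}\g_s$. The central bookkeeping device is that, by the a priori bound $\y^\si\w\ge\d$ and the definition \eqref{Q} of $Q(t)$, the coefficient $a^\ep=\p_y\w/\w$ and its first derivatives obey weighted $L^\inf$ bounds of the form $\y|a^\ep|+\y|\p_x a^\ep|+\y^2|\p_y a^\ep|\le C_{\si,\d}(1+Q)$, while $\u-U=-\int_y^{+\inf}\w$ and $\v+y\p_x U$ are controlled by Hardy's and Sobolev's inequalities in terms of $\|\w\|_{H^{s,\ga}}$ and $Q$. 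With these, $2\g_s\p_y a^\ep$ and $-\ga\y^{2\ga}\frac{\v}{1+y}\g_s^2$ contribute $\|\y^\ga\g_s\|_{L^2}^2$ multiplied by a factor polynomial in $Q$ and $\|\w\|_{H^{s,\ga}_g}$; the two sums over $1\le j\le s-1$ involve only $\g_k$ with $k\le s$ and derivatives of $\w,\u,\v$ of order at most $s$, so the Moser, Hardy and Sobolev inequalities, together with the identity $\p_x^j\p_y\w-a^\ep\p_x^j\w=\p_y\g_j+(\p_y a^\ep)\p_x^j(\u-U)$, keep them polynomial in $\|\w\|_{H^{s,\ga}_g}$ and $Q$; and $-\g_1\p_x^s U$ together with $a^\ep\sum_{j=0}^{s-1}\binom sj\p_x^j(\u-U)\p_x^{s-j+1}U$ carry one top-order factor of $U$ and, after Young, give the term $C_{s,\ga,\si,\d}\|\p_x^{s+1}U\|_{L^\inf(\T)}^4$. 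The $\ep^2$-terms in the first line of \eqref{eq-gs} I would integrate once by parts in $x$, absorbing a portion into the viscous dissipation $\ep^2\|\y^\ga\p_x\g_s\|_{L^2}^2$ and keeping the remainder bounded uniformly in $\ep\in(0,1]$. Finally I would move the absorbed fractions of $\ep^2\|\y^\ga\p_x\g_s\|_{L^2}^2$ and $\|\y^\ga\p_y\g_s\|_{L^2}^2$ to the left, collect everything, and arrange the nonlinear terms by Young's inequality so that $Q$ appears to at most the fourth power and $\|\w\|_{H^{s,\ga}_g}$ to at most the sixth, which yields the stated inequality.

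The hard part, I expect, is the boundary term: because $\g_s$ carries the nonlinear-cancellation correction $a^\ep\p_x^s(\u-U)$, its normal trace brings in $\p_x^{s+1}p^\ep$, and the trace of $\p_x^s\w$ must be split between the genuine dissipation and the lower-order norms with no net loss of tangential derivative while the pressure is kept at the fourth power. Closely related is the uniform-in-$\ep$ treatment of the artificial-viscosity terms, for which the only smoothing available is the weak dissipation $\ep^2\|\y^\ga\p_x\g_s\|_{L^2}^2$; it is precisely here that the $L^\inf$-control of $a^\ep$ and its derivatives through $Q$ --- the novelty of the norm $\|\cdot\|_{\mathcal{B}^{s,\ga,\si}_g}$ --- takes over the role played by the smallness hypothesis \eqref{initial-data-1} in \cite{Masmoudi-Wong}.
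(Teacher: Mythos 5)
Your overall strategy coincides with the paper's: test \eqref{eq-gs} against $\y^{2\ga}\g_s$, use the divergence-free structure for the convective term, extract the pressure through the boundary identity $\p_y \g_s|_{y=0}=\p_x^{s+1}p^\ep+\frac{\p_y^2 \w}{\w}\p_x^s U|_{y=0}-\frac{\p_y \w}{\w}\g_s|_{y=0}$ combined with the trace and Young inequalities, and control $a^\ep$ and its first derivatives in weighted $L^\inf$ through the lower bound $\y^\si\w\ge\d$ and the quantity $Q(t)$ of \eqref{Q}. All of this matches the paper's terms $K_1$, $K_2$ and $K_4$ through $K_8$, and your accounting of where $\p_x^{s+1}p^\ep$ and $\p_x^{s+1}U$ enter is correct.

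The gap is in your treatment of the artificial-viscosity term $2\ep^2\{\p_x^{s+1}(\u-U)-\frac{\p_x \w}{\w}\p_x^s(\u-U)\}\p_x a^\ep$ (the paper's $K_3$). Integrating by parts in $x$ turns $\int \y^{2\ga}\g_s\,\p_x^{s+1}(\u-U)\,\p_x a^\ep\,dxdy$ into a piece containing $\p_x\g_s$, which can indeed be absorbed into $\ep^2\|\y^\ga\p_x\g_s\|_{L^2}^2$, plus a remainder of the form $\ep^2\int \y^{2\ga}\p_x^s(\u-U)\,\g_s\,\p_x^2 a^\ep\,dxdy$. The factor $\p_x^2 a^\ep$ contains $\frac{\p_x^2\p_y\w}{\w}$, a third-order derivative of $\w$: it is not controlled by $Q(t)$, which stops at $|\a|=2$; estimating it in weighted $L^\inf$ via \eqref{Sobolev} would require $\p_x^2\p_y^3\w\in L^2$, i.e.\ $|\a|=5>s$ in the minimal case $s=4$; and placing it in $L^2$ forces $\p_x^s(\u-U)$ or $\g_s$ into $L^\inf$, which reintroduces exactly the tangential-derivative loss the lemma is designed to avoid. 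So the claim that "the remainder is bounded uniformly in $\ep$" does not go through as stated. The paper avoids the second $x$-derivative of $a^\ep$ altogether: it uses the identity $\p_x \g_s=\w\,\p_y\{\frac{\p_x^{s+1}(\u-U)}{\w}\}-\p_x a^\ep\,\p_x^s(\u-U)$ together with the Hardy inequality \eqref{Hardy2} with negative exponent $\ga-\si-1<-\frac{1}{2}$ (whose boundary contribution produces $\|\p_x^{s+1}U\|_{L^2(\T)}$) to bound $\|\y^{\ga-\si-1}\frac{\p_x^{s+1}(\u-U)}{\w}\|_{L^2}$ by $\|\y^\ga\p_x\g_s\|_{L^2}$ plus quantities controlled by $Q(t)$ and $\|\w\|_{H^{s,\ga}_g}$, and only then absorbs $\frac{1}{2}\ep^2\|\y^\ga\p_x\g_s\|_{L^2}^2$. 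This is a second application of the nonlinear cancellation, now at order $s+1$, and it is the ingredient your sketch is missing.
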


\begin{proof}
Multiplying the equation \eqref{eq-gs} by $\y^{2\ga}\g_s$
and integrating over $\T \times \mathbb{R^+}$, we have
\begin{equation}\label{i31a}
\frac{1}{2}\frac{d}{dt}\|\y^\ga \g_s\|_{L^2}^2
+\ep^2\|\y^\ga \p_x \g_s\|_{L^2}^2
=\sum_{i=1}^8 K_i,
\end{equation}
where the terms $K_i(i=1,...,8)$ are defined by
\begin{equation*}
\begin{aligned}
&K_1=\int_{\T\times \mathbb{R}^+} \y^{2\ga}\g_s \p_y^2 \g_s dxdy,\quad
K_2=-\int_{\T\times \mathbb{R}^+} \y^{2\ga}\g_s(\u \p_x \g_s +\v \p_y \g_s)dxdy,\\
&K_3=2\ep^2 \int_{\T\times \mathbb{R}^+} \y^{2\ga}\g_s\{\p_x^{s+1}(\u-U)-\frac{\p_x \w}{\w}\p_x^s(\u-U)\}\p_x a^\ep dxdy,\\
&K_4=2\int_{\T\times \mathbb{R}^+} \y^{2\ga}|\g_s|^2 \p_y a^\ep dxdy,\quad
K_5=\sum_{j=1}^{s-1}\int_{\T\times \mathbb{R}^+} \y^{2\ga}\g_{j+1}\g_s \p_x^{s-j}\u dxdy,\\
&K_6=\int_{\T\times \mathbb{R}^+} \y^{2\ga}\g_1 \g_s \p_x^s U dxdy,\quad
 K_7=\sum_{j=0}^{s-1}\int_{\T\times \mathbb{R}^+} \y^{2\ga} \g_s a^\ep \p_x^j(\u-U) \p_x^{s-j+1}U dxdy,\\
&K_8=\sum_{j=1}^{s-1}\int_{\T\times \mathbb{R}^+} \y^{2\ga}\p_x^{s-j}\v (\p_x^j \p_y \w-a^\ep \p_x^j \w)\g_s dxdy.
\end{aligned}
\end{equation*}

Deal with the term $K_1$.
Integrating by part in the $y-$variable, we get
\begin{equation*}
\begin{aligned}
K_1
&=\int_{\T\times \mathbb{R}^+} \y^{2\ga}\g_s \p_y^2 \g_s dxdy\\
&=\int_\T \g_s \p_y \g_s|_{y=0}dx-\int_{\T\times \mathbb{R}^+} \y^{2\ga}|\p_y \g_s|^2dxdy
  -2\ga \int_{\T\times \mathbb{R}^+} \y^{2\ga-1}\g_s \p_y \g_s dxdy\\
&=-\frac{3}{4}\|\y^\ga \p_y \g_s\|_{L^2}^2-\int_\T \g_s \p_y \g_s|_{y=0}dx+C_\ga \|\y^{\ga-1}\g_s\|_{L^2}^2.
\end{aligned}
\end{equation*}
Due to the definition of $\g_s$ and boundary condition $\p_y \w|_{y=0}=\p_x p^\ep$, it follows
\begin{equation*}
\p_y \g_s|_{y=0}=\p_x^{s+1}p^\ep+\frac{\p_y^2 \w}{\w}\p_x^s U|_{y=0}-\frac{\p_y \w}{\w}\g_s|_{y=0}.
\end{equation*}
Using the H\"{o}lder and trace inequalities, we get
\begin{equation*}
\begin{aligned}
|\int_\T \g_s \p_x^{s+1}p^\ep|_{y=0} dx|
&\le \|\g_s|_{y=0}\|_{L^2(\T)}\|\p_x^{s+1}p^\ep\|_{L^2(\T)}\\
&\le \sqrt{2}\|\g_s\|_{L^2}^{\frac{1}{2}} \|\p_y \g_s\|_{L^2}^{\frac{1}{2}} \|\p_x^{s+1}p^\ep\|_{L^2(\T)}\\
&\le \frac{1}{8}\|\p_y \g_s\|_{L^2}^2+C\|\p_x^{s+1}p^\ep\|_{L^2(\T)}^{\frac{4}{3}}\|\g_s\|_{L^2}^{\frac{2}{3}}.
\end{aligned}
\end{equation*}
Due to $\y^\si \w \ge \d$, it follows $\w|_{y=0}\ge \d$, and hence,
we get, after using $\p_y \w|_{y=0}=\p_x p^\ep$,
\begin{equation*}
|\int_\T \g_s \frac{\p_y \w}{\w}\g_s|_{y=0}dx|
\le \frac{1}{8}\|\p_y \g_s\|_{L^2}^2
     +C_\d\|\p_x p^\ep\|_{L^\inf(\T)}^2\|\g_s\|_{L^2}^2.
\end{equation*}
and
\begin{equation*}
\begin{aligned}
|\int_\T \g_s \frac{\p_y^2 \w}{\w}\p_x^s U|_{y=0}dx|
&\le \d^{-1}\|\p_x^s U\|_{L^\infty(\T)}\|\g_s|_{y=0}\|_{L^2(\T)}\|\p_y^2 \w|_{y=0}\|_{L^2(\T)}\\
&\le 2\d^{-1}\|\p_x^s U\|_{L^\infty(\T)}\|\g_s\|_{L^2}^\frac{1}{2}\|\p_y \g_s\|_{L^2}^\frac{1}{2}
     \|\p_y^2 \w\|_{L^2}^\frac{1}{2}\|\p_y^3 \w\|_{L^2}^\frac{1}{2}\\
&\le \frac{1}{8}\|\p_y \g_s\|_{L^2}^2
     +C_\d\|\p_x^s U\|_{L^\infty(\T)}^{\frac{4}{3}}\|\g_s\|_{L^2}^\frac{2}{3}\|(\p_y^2 \w, \p_y^3 \w)\|_{L^2}^\frac{4}{3}.
\end{aligned}
\end{equation*}
Thus, the term $K_1$ can be estimated as follows
\begin{equation*}
K_1 \le -\frac{1}{2}\|\y^\ga \p_y \g_s\|_{L^2}^2
        +C_{\ga, \d}(1+\|\p_x^{s+1} p^\ep\|_{L^2(\T)}^2+\|\p_x^s U\|_{L^\inf(\T)}^2)(1+\|\w\|_{H^{s,\ga}_g}^2).
\end{equation*}

Deal with the term $K_2$. Integrating by part and applying the divergence-free condition, it follows
\begin{equation*}
|K_2|
=|\int_{\T\times \mathbb{R}^+} \y^{2\ga}\g_s(\u \p_x \g_s +\v \p_y \g_s)dxdy|
\le C_\ga \|\frac{\v}{1+y}\|_{L^\inf}\|\y^\ga \g_s\|_{L^2}^2,
\end{equation*}
which, along with inequality \eqref{b42}, yields directly
\begin{equation*}
|K_2|\le C_{\ga, \si, \d}(1+\|\p_x^s U\|_{L^\inf(\T)}+\|\y^{\si+1}\p_y \w\|_{L^\inf})\|\w\|_{H^{s,\ga}_g}^3.
\end{equation*}

Deal with the term $K_3$.
First of all, using the H\"{o}lder inequality, we get
\begin{equation}\label{i31}
\begin{aligned}
&|\int_{\T\times \mathbb{R}^+} \y^{2\ga}\g_s \frac{\p_x \w}{\w}\p_x^s(\u-U)\p_x a^\ep dxdy|\\
&\le \|\frac{\p_x \w}{\w}\|_{L^\infty}\|\y \p_x a^\ep\|_{L^\inf}
     \|\y^{\ga-1} \p_x^s(\u-U)\|_{L^2}\|\y^{\ga}\g_s\|_{L^2}.
\end{aligned}
\end{equation}
Due to the fact $\p_x a^\ep=\frac{\p_{xy} \w}{\w}-\frac{\p_y \w \p_x \w}{(\w)^2}$,
we get, after using the inequality $\y^\si \w \ge \d$,
\begin{equation}\label{i32}
\|\frac{\p_x \w}{\w}\|_{L^\infty}\le \d^{-1}\|\y^\si \p_x \w\|_{L^\inf}
\end{equation}
and
\begin{equation}\label{i33}
\|\y \p_x a^\ep\|_{L^\inf}
\le \d^{-1}\|\y^{\si+1} \p_{xy}\w\|_{L^\inf}
+\d^{-2}\|\y^{\si+1} \p_{y}\w\|_{L^\inf}\|\y^{\si} \p_{x}\w\|_{L^\inf}.
\end{equation}
Substituting the inequalities \eqref{i32} and \eqref{i33} into \eqref{i31},
and applying inequality \eqref{b31}, we obtain
\begin{equation}\label{i34}
|\int_{\T\times \mathbb{R}^+} \y^{2\ga}\g_s \frac{\p_x \w}{\w}\p_x^s(\u-U)\p_x a^\ep dxdy|
\le C_{\ga,\si,\d}(1+\|\p_x^s U\|_{L^\inf(\T)}^2+Q^2(t))\|\w\|_{H^{s,\ga}_g}^2.
\end{equation}
On the other hand, by virtue of the H\"{o}lder inequality, we get
\begin{equation}\label{i35}
\begin{aligned}
&|\int_{\T\times \mathbb{R}^+} \y^{2\ga}\g_s \p_x^{s+1}(\u-U) \p_x a^\ep dxdy|\\
\le
&\|\y^{\si+1} \w \p_x a^\ep\|_{L^\inf}
 \|\y^{\ga-\si-1}\frac{ \p_x^{s+1}(\u-U)}{\w}\|_{L^2}\|\y^\ga \g_s\|_{L^2}.
\end{aligned}
\end{equation}
Due to the fact $\y^{\si}\w \ge \d$, it follows
\begin{equation}\label{i36}
\|\y^{\si+1} \w \p_x a^\ep\|_{L^\inf}
\le \|\y^{\si+1} \p_{xy} \w\|_{L^\inf}
    +\d^{-1}\|\y^{\si} \p_{x} \w\|_{L^\inf}\|\y^{\si+1} \p_{y} \w\|_{L^\inf}
\end{equation}
By routine checking, it follows
$$
\p_x \g_s=\w \p_y \{\frac{\p_x^{s+1}(\u-U)}{\w}\}-\p_x a^\ep \p_x^s(\u-U),
$$
and hence, we get after applying the Hardy inequality
\begin{equation}\label{i37}
\begin{aligned}
&\|\y^{\ga-\si-1}\frac{ \p_x^{s+1}(\u-U)}{\w}\|_{L^2}\\
&\le C_{\ga, \si}\{\|\frac{\p_x^{s+1}U}{\w|_{y=0}}\|_{L^2(\T)}+
\|\y^{\ga-\si}\p_y\{\frac{\p_x^{s+1}(\u-U)}{\w}\}\|_{L^2}\}\\
&\le C_{\ga, \si, \d}\{\|\p_x^{s+1}U\|_{L^2(\T)}+\|\y^{\ga}\p_x \g_s\|_{L^2}
     +\|\y \p_x a^\ep\|_{L^\inf}\|\y^{\ga-1}\p_x^s(\u-U)\|_{L^2}\}.
\end{aligned}
\end{equation}
Substituting inequalities \eqref{i36} and \eqref{i37} into \eqref{i35},
and applying the inequality \eqref{b31}, it follows
\begin{equation*}
\begin{aligned}
&|\int_{\T\times \mathbb{R}^+} \y^{2\ga}\g_s \p_x^{s+1}(\u-U) \p_x a^\ep dxdy|\\
\le
&\frac{1}{4} \|\y^{\ga}\p_x \g_s\|_{L^2}^2
     +C_{\ga, \si, \d}(1+\|\p_x^{s+1}U\|_{L^2(\T)}^2+Q^2(t))(1+\|\w\|_{H^{s,\ga}_g}^2).
\end{aligned}
\end{equation*}
This and the inequality \eqref{i34} imply  directly
\begin{equation*}
|K_3|\le \frac{1}{2}\ep^2 \|\y^{\ga}\p_x \g_s\|_{L^2}^2
     +C_{\ga, \si, \d}(1+\|\p_x^{s+1}U\|_{L^\inf(\T)}^2+Q^2(t))(1+\|\w\|_{H^{s,\ga}_g}^2).
\end{equation*}

Deal with the term $K_4$.
Indeed, it is easy to get
\begin{equation}\label{i41}
|K_4|=|2\int_{\T\times \mathbb{R}^+} \y^{2\ga}|\g_s|^2 \p_y a^\ep dxdy|
\le C\|\p_y a^\ep\|_{L^\inf}\|\y^\ga \g_s\|_{L^2}^2.
\end{equation}
Due to the fact $\p_y a^\ep=\frac{\p_y^2 \w}{\w}-(\frac{\p_y \w}{\w})^2$, we obtain
\begin{equation*}
\|\p_y a^\ep\|_{L^\inf}
\le \d^{-1}\|\y^\si \p_y^2 \w\|_{L^\inf}
    +\d^{-2}\|\y^\si \p_y \w\|_{L^\inf}^2.
\end{equation*}
This and the inequality \eqref{i41} give immediately
\begin{equation*}
|K_4|\le C_\d(1+Q(t))\|\w\|_{H^{s,\ga}_g}^2.
\end{equation*}

Deal with the term $K_5$. Using the H\"{o}lder inequality, it follows
\begin{equation*}
|K_5|\le C_s \|\p_x^{s-j}\u\|_{L^\inf}\|\y^{\ga} \g_{j+1}\|_{L^2}\|\y^{\ga} \g_s\|_{L^2}.
\end{equation*}
This and the inequalities \eqref{b33} and \eqref{b41} imply directly
\begin{equation*}
|K_5|\le C_{s,\ga,\si}(1+\|\p_x^s U\|_{L^\inf(\T)}^2+Q(t))(1+\|\w\|_{H^{s,\ga}_g}^3).
\end{equation*}

Similarly, it is easy to deduce
\begin{equation*}
|K_6|\le C_{\ga, \d}(1+\|\p_x^s U\|_{L^\inf(\T)}^2+Q(t))\|\w\|_{H^{s,\ga}_g}^2.
\end{equation*}
Deal with the term $K_7$. Using the H\"{o}lder and Hardy inequalities,
we get for $j=0,...,s-1$
\begin{equation*}
\begin{aligned}
|K_7|
&\le C_s \|\p_x^{s-j+1}U\|_{L^\inf(\T)}\|\y a^\ep\|_{L^\inf}
             \|\y^{\ga-1} \p_x^j (\u-U)\|_{L^2}\|\y^\ga \g_s\|_{L^2}\\
&\le C_{s,\ga} \|\p_x^{s+1}U\|_{L^\inf(\T)}\|\y a^\ep\|_{L^\inf}\|\w\|_{H^{s,\ga}_g}^2,
\end{aligned}
\end{equation*}
and hence, using the fact $\|\y a^\ep\|_{L^\inf}\le \d^{-1}\|\y^{\si+1}\p_y \w\|_{L^\inf}$, we get
\begin{equation*}
|K_7|\le C_{s,\ga, \d}(\|\p_x^{s+1}U\|_{L^\inf(\T)}^2+Q(t))\|\w\|_{H^{s,\ga}_g}^2.
\end{equation*}

Deal with the term $K_8$. For the case $j=1$, it is easy to check that
\begin{equation*}
\begin{aligned}
&\int_{\T\times \mathbb{R}^+}  \y^{2\ga}\p_x^{s-1}\v (\p_{xy}\w-a^\ep \p_x \w)\g_s dxdy\\
=&\int_{\T\times \mathbb{R}^+}  \y^{2\ga+1}\frac{\p_x^{s-1}\v+y\p_x^s U}{1+y} (\p_{xy}\w-a^\ep \p_x \w)\g_s dxdy\\
&-\int_{\T\times \mathbb{R}^+}  \y^{2\ga+1}\frac{y\p_x^s U}{1+y} (\p_{xy}\w-a^\ep \p_x \w)\g_s dxdy.
\end{aligned}
\end{equation*}
By virtue of the H\"{o}lder inequality, it follows
\begin{equation}\label{i71}
\begin{aligned}
&|\int_{\T\times \mathbb{R}^+}  \y^{2\ga+1}\frac{\p_x^{s-1}\v+y\p_x^s U}{1+y} (\p_{xy}\w-a^\ep \p_x \w)\g_s dxdy|\\
\le &(\|\y^{\ga+1}\p_{xy}\w\|_{L^\inf}+\|\y a^\ep\|_{L^\inf} \|\y^{\ga}\p_x \w\|_{L^\inf})\\
&\times \|\frac{\p_x^{s-1}\v+y\p_x^s U}{1+y}\|_{L^2} \|\y^{\ga}\g_s\|_{L^2}
\end{aligned}
\end{equation}
and
\begin{equation}\label{i72}
\begin{aligned}
&|\int_{\T\times \mathbb{R}^+}  \y^{2\ga+1}\frac{y\p_x^s U}{1+y} (\p_{xy}\w-a^\ep \p_x \w)\g_s dxdy|\\
\le &(\|\y^{\ga+1}\p_{xy}\w\|_{L^2}+\|\y a^\ep\|_{L^\inf}\|\y^\ga \p_x \w\|_{L^2})\\
&\times \|\p_x^s U\|_{L^\inf(T)}\|_{L^2} \|\y^{\ga}\g_s\|_{L^2}.
\end{aligned}
\end{equation}
Due to the fact $\|\y a^\ep\|_{L^\inf}\le \d^{-1}\|\y^{\si+1} \p_y \w\|_{L^\inf}$,
applying the inequality \eqref{b32} and  Sobolev inequality to
inequalities \eqref{i71} and \eqref{i72}, we get
\begin{equation}\label{i73}
|\int_{\T\times \mathbb{R}^+}  \y^{2\ga}\p_x^{s-1}\v (\p_{xy}\w-a^\ep \p_x \w)\g_s dxdy|
\le C_{\ga, \d}(1+\|\p_x^s U\|_{L^\inf(\T)}^2+Q(t))(1+\|\w\|_{H^{s,\ga}_g}^3).
\end{equation}
On the other hand, by virtue of the H\"{o}lder inequality and estimate \eqref{b42}, we get for $j=2,...,s-1$,
\begin{equation*}
\begin{aligned}
&|\int_{\T\times \mathbb{R}^+} \y^{2\ga}\p_x^{s-j}\v (\p_x^j \p_y \w-a^\ep \p_x^j \w)\g_s dxdy|\\
\le
&(\|\y^{\ga+1}\p_x^j \p_y \w\|_{L^2}+\|\y a^\ep\|_{L^\inf}\|\y^\ga \p_x^j \w\|_{L^2})\\
& \times \|\frac{\p_x^{s-j}\v}{1+y}\|_{L^\inf}\|\y^\ga \g_s\|_{L^2}\\
&\le C_{\ga,\si, \d}(1+\|\p_x^s U\|_{L^\inf(\T)}^2+Q(t))\|\w\|_{H^{s,\ga}_g}^3.
\end{aligned}
\end{equation*}
This and the inequality \eqref{i73} imply
\begin{equation*}
|K_8|\le C_{s,\ga, \si, \d} (1+\|\p_x^s U\|_{L^\inf(\T)}^2+Q(t))(1+\|\w\|_{H^{s,\ga}_g}^3).
\end{equation*}
Substituting the estimates of $K_1$ through $K_8$ into equality \eqref{i31a}, we complete the proof of lemma.
\end{proof}

Based on the estimates obtained in Lemmas \ref{basic-lemma1} and \ref{basic-lemma2}, we have the estimate:
\begin{equation}\label{com-L2}
\begin{aligned}
&\frac{d}{dt}\|\w\|_{H^{s,\ga}_g}^2
 +\ep^2 \sum_{\substack{ |\alpha| \le s \\ \a_1 \le s-1}}\|\y^{\ga+\a_2} \p_x D^\a \w\|_{L^2}^2
 +\ep^2\|\y^\ga \p_x \g_s\|_{L^2}^2\\
&+\sum_{\substack{ |\alpha| \le s \\ \a_1 \le s-1}}\|\y^{\ga+\a_2} \p_y D^\a \w\|_{L^2}^2
 +\|\y^\ga \p_y \g_s\|_{L^2}^2\\
\le
&C_s\{\|\p_x^{s+1}p^\ep\|_{L^2(\T)}^4+\sum_{k=0}^{s/2}\|\p_t^k \p_x p^\ep\|_{H^{s-2k}(\T)}^2\}
 +C_{s,\ga,\si,\d}\|\p_x^{s+1}U\|_{L^\inf(\T)}^4\\
&+C_{s,\ga,\si,\d}\{1+Q^4(t)+\|\w\|_{H^{s,\ga}_g}^{s+4}\}.
\end{aligned}
\end{equation}

Using the regularized Bernoulli's law \eqref{Bernoulli-eq}, we get
\begin{equation}\label{eq-p}
\sum_{k=0}^{s/2}\|\p_t^k \p_x p^\ep\|_{H^{s-2k}(\T)}^2
\le C_s\{1+\sum_{k=0}^{s/2+1} \|\p_t^k U\|_{H^{s-2k+2}(\T)}^2\}^2.
\end{equation}
Using the Sobolev inequality for dimension one, it follows
\begin{equation*}
\|\p_x^{s+1}U\|_{L^\inf(\T)}\le C\|\p_x^{s+1}\|_{H^1(\T)}.
\end{equation*}
This and the inequality \eqref{eq-p} imply
\begin{equation}\label{eq-p-u}
\begin{aligned}
&\|\p_x^{s+1}p^\ep\|_{L^2(\T)}^4+\|\p_x^{s+1}U\|_{L^\inf(\T)}^4
 +\sum_{k=0}^{s/2}\|\p_t^k \p_x p^\ep\|_{H^{s-2k}(\T)}^2\\
&\le  C_s\{1+\sum_{k=0}^{s/2+1} \|\p_t^k U\|_{H^{s-2k+2}(\T)}^2\}^4
 \le  C_s(1+M_U)^4.
\end{aligned}
\end{equation}
Substituting estimate \eqref{eq-p-u} into \eqref{com-L2},
and integrating the resulting inequality over $[0, t]$, it follows
\begin{equation*}
\underset{0\le \t \le t}{\sup}\|\w(\t)\|_{H^{s,\ga}_g}^2
\le
\|w_0\|_{H^{s,\ga}_g}^2+C_{s,\ga,\si,\d}(1+M_U)^4 t
+C_{s,\ga,\si,\d}\int_0^t \{1+Q^4(\t)+\|\w(\t)\|_{H^{s,\ga}_g}^{s+4}\}d\t,
\end{equation*}
or equivalently,
\begin{equation}\label{eq-f1}
\underset{0\le \t \le t}{\sup}\|\w(\t)\|_{H^{s,\ga}_g}^2
\le
\|w_0\|_{H^{s,\ga}_g}^2+C_{s,\ga,\si,\d}(1+M_U)^4 t
+C_{s,\ga,\si,\d} \O_g(t)^s  t,
\end{equation}
for all $s\ge 4$ and $\ga \ge 1$.

\subsection{Weighted $L^\inf$ Estimates for Lower Order Terms}

In this subsection, we will establish the estimate for the quantity $Q(t)$
to close the estimate. Since the weight index $\si>\ga+\frac{1}{2}$, we can not close this $L^\inf$
estimate by the Sobolev inequality and weighted energy estimates directly.
Similar to \cite{Masmoudi-Wong}, we apply the maximum principle of heat equation to control the
quantity $Q(t)$ by its initial data, boundary condition and quantity $\O_g(t)$.
Note that the boundary condition without weight can be controlled by $\O_g(t)$ owning to $\ga \ge 1$.

\begin{lemma}\label{basic-lemma3}
Under the hypotheses of Theorem \ref{main-thereom}, we have the following estimates:
\begin{equation}\label{eq-f2}
\underset{0\le \t \le t}{\sup}Q(\t)\le e^{C_{s,\ga,\si,\d}\{1+M_U+\O_g(t)\}t}
\{\|w_0\|_{\mathcal{B}^{s,\ga,\si}_g}^2+C_{s,\ga,\si,\d}(1+M_U)^4 t+C_{s,\ga,\si,\d} \O_g(t)^{s}t\},
\end{equation}
and
\begin{equation}\label{eq-f3}
\y^{\si}\w(t,x,y)
\ge \y^{\si}w_0(x,y)-Ct (1+M_U+\O(t)).
\end{equation}
\end{lemma}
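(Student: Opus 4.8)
The plan is to treat the two estimates by parallel but distinct tools: the bound on $Q(t)$ (the weighted $L^\infty$ norms of $D^\a\w$, $1\le|\a|\le 2$) is obtained via the maximum principle for the parabolic equation satisfied by $\y^{\si+\a_2}D^\a\w$, while the lower bound \eqref{eq-f3} on $\y^\si\w$ follows from a direct integration of the vorticity equation along characteristics, or equivalently from a pointwise ODE-type comparison. For the first estimate, I would differentiate the vorticity equation \eqref{re-Prandtl-w} in $x$ and $y$ exactly $|\a|$ times, then set $\zeta^\a:=\y^{\si+\a_2}D^\a\w$ and compute the equation for $\zeta^\a$. Conjugating the convection--diffusion operator $\p_t+\u\p_x+\v\p_y-\ep^2\p_x^2-\p_y^2$ by the weight $\y^{\si+\a_2}$ produces lower-order commutator terms: a first-order term $-2(\si+\a_2)\y^{-1}\p_y\zeta^\a$ from the diffusion, a zeroth-order term of the form $[(\si+\a_2)(\si+\a_2+1)\y^{-2}-(\si+\a_2)\frac{\v}{1+y}+(\si+\a_2)\y^{-1}(\ldots)]\zeta^\a$, plus all the commutators coming from $D^\a$ hitting the coefficients $\u,\v$ and from lower-order derivatives of $\w$. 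The key structural point is that each of these forcing/zeroth-order terms, when measured in $L^\infty$, is bounded by $C_{s,\ga,\si,\d}(1+M_U+\O_g(t))$ times either $Q(t)^{1/2}$, $\O_g(t)^{s/2}$, or the $H^{s,\ga}_g$ norm — here one uses the a priori assumption \eqref{w-d} (i.e. $\y^\si\w\ge\d$) to control the factors $1/\w$, the Sobolev embedding $H^{s,\ga}\hookrightarrow L^\infty$ in the relevant weighted spaces (valid since $\si>\ga+\frac12$ is compensated by having $s\ge4$ derivatives available after using $\y^\ga$-weights), and the bound on $U$ and hence on $\p_x p^\ep$ via \eqref{eq-p}.

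Having cast the equation as $(\p_t+\u\p_x+\v\p_y-\ep^2\p_x^2-\p_y^2)\zeta^\a + b\cdot\nabla\zeta^\a + c\,\zeta^\a = F^\a$ with $\|c\|_{L^\infty},\|F^\a\|_{L^\infty}\lesssim C_{s,\ga,\si,\d}(1+M_U+\O_g(t))\max(Q^{1/2},\O_g^{s/2},\ldots)$ — where crucially the drift $b$ and the $\ep^2\p_x^2$ term have the right sign or can be absorbed at an interior maximum — I would apply the maximum principle: at any interior point where $|\zeta^\a|$ attains its spatial maximum, $\p_y\zeta^\a=0$, $-\p_y^2\zeta^\a\ge0$, $-\ep^2\p_x^2\zeta^\a\ge0$, so $\frac{d}{dt}\|\zeta^\a(t)\|_{L^\infty}\le \|c\|_{L^\infty}\|\zeta^\a\|_{L^\infty}+\|F^\a\|_{L^\infty}$. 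The boundary term at $y=0$ must be handled separately: for $|\a|=1$ this is $\p_y\w|_{y=0}=\p_x p^\ep$ (or its tangential derivative), which has no weight issue since $\y=1$ there, and is controlled by $M_U$ through \eqref{eq-p}; for $|\a|=2$ with $\a_2\ge1$ one differentiates the boundary relation further, still landing on $\p_t^k\p_x p^\ep$ terms plus lower-order interior terms controlled by $\O_g(t)$. Summing over $1\le|\a|\le2$ and invoking Grönwall's inequality on $Q(t)$ with the initial datum $Q(0)\le\|w_0\|_{\mathcal{B}^{s,\ga,\si}_g}^2$, together with \eqref{eq-f1} to replace the appearances of $\|\w\|_{H^{s,\ga}_g}^2$ by $\|w_0\|_{\mathcal{B}^{s,\ga,\si}_g}^2+C(1+M_U)^4t+C\O_g(t)^s t$, yields exactly \eqref{eq-f2}.

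For the lower bound \eqref{eq-f3}, I would argue directly from \eqref{re-Prandtl-w}: writing the equation as $(\p_t+\u\p_x+\v\p_y)\w=\ep^2\p_x^2\w+\p_y^2\w$ and multiplying by $\y^\si$, the quantity $\rho:=\y^\si\w$ satisfies $(\p_t+\u\p_x+\v\p_y)\rho = \ep^2\p_x^2\rho+\p_y^2\rho+(\text{commutator terms})$ where the commutators are $-\si\y^{\si-1}\v\w - \ep^2[\ldots] - 2\si\y^{\si-1}\p_y\w - \si(\si-1)\y^{\si-2}\w$ type expressions, all bounded in $L^\infty$ by $C(1+M_U+\O(t))$ using $\|\v/(1+y)\|_{L^\infty}\lesssim\|\w\|_{H^{s,\ga}}+\|\p_xU\|_{L^\infty}$ from \eqref{l2a} and the control of $\p_y\w$, $\w$ via $\O(t)$. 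At a point where $\rho$ attains its spatial minimum the diffusion terms are $\ge0$ and the transport term vanishes, so $\frac{d}{dt}\min_{x,y}\rho(t)\ge -C(1+M_U+\O(t))$, which integrates to $\min\rho(t)\ge\min\rho(0)-Ct(1+M_U+\O(t))$; pointwise this is \eqref{eq-f3}. The main obstacle is bookkeeping: verifying that \emph{every} commutator term generated by conjugating the full second-order parabolic operator by $\y^{\si+\a_2}$ and by the product rule on $D^\a(\u\p_x\w+\v\p_y\w)$ is genuinely lower-order and closes against $Q$, $\O_g$, $M_U$ and the already-established $H^{s,\ga}_g$ bound \eqref{eq-f1} — in particular one must check the exponents match (the $Q^4$ and $\O_g^s$ powers appearing in \eqref{eq-f2}) and that the weight mismatch $\si>\ga+\frac12$ never forces a derivative count exceeding $s$. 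No new analytic input beyond the maximum principle and the inequalities already collected in the appendices is needed.
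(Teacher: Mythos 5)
Your overall strategy for \eqref{eq-f2} is the paper's: conjugate the parabolic operator by the weights, apply a maximum principle, and close with Gr\"onwall plus the already-established bound \eqref{eq-f1} on $\|\w\|_{H^{s,\ga}_g}^2$. (The paper works with the single scalar $I(t):=\sum_{1\le|\a|\le2}|\y^{\si+\a_2}D^\a\w|^2$ rather than each $\zeta^\a$ separately, which lets the forcing terms be absorbed into a zeroth-order coefficient $\lambda I$, but that is only bookkeeping.) The genuine gap is in your treatment of the boundary. You propose to control the boundary contribution through the Neumann relation $\p_y\w|_{y=0}=\p_x p^\ep$, but this only identifies the traces of $\p_y\w$ and $\p_{xy}\w$ at $y=0$. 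For $\a=(1,0)$, $(2,0)$ and $(0,2)$ there is no boundary relation at all for $D^\a\w|_{y=0}$, and a maximum of $|\zeta^\a|$ attained at $y=0$ cannot be dismissed by the interior sign argument ($-\p_y^2\zeta^\a\ge0$ fails at a boundary maximum). The maximum principle actually available (Lemma \ref{max-p}) yields a bound involving $\max_\tau\|I(\tau)|_{y=0}\|_{L^\infty(\T)}$, i.e.\ the \emph{Dirichlet} trace, and the paper controls this uniformly for all $1\le|\a|\le2$ by the Sobolev inequality \eqref{Sobolev}: $\|D^\a\w|_{y=0}\|_{L^\infty(\T)}\le C\|\w\|_{H^{s,\ga}_g}$, using $s\ge4$. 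This step is not optional --- it is precisely what produces the $C_{s,\ga,\si,\d}\,\O_g(t)^{s}t$ term in \eqref{eq-f2} after inserting \eqref{eq-f1} --- and your proposal as written has no substitute for it.

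For \eqref{eq-f3} you take a genuinely different route (a minimum principle for $\rho=\y^\si\w$), while the paper simply bounds $\|\y^\si\p_t\w\|_{L^\infty}$ from the equation by $C(1+M_U+Q(t)+\|\w\|_{H^{s,\ga}}^2)$ and integrates $\w(t)-w_0=\int_0^t\p_\tau\w\,d\tau$ pointwise. The paper's argument is both simpler and stronger: it gives the stated pointwise inequality $\y^\si\w(t,x,y)\ge\y^\si w_0(x,y)-Ct(1+M_U+\O(t))$ at every $(x,y)$, whereas a minimum principle only compares $\min\rho(t)$ with $\min\rho(0)$, and it again runs into the question of a minimum attained on $y=0$, where the sign of $\p_y^2\rho$ is not controlled. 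I would replace both problematic boundary arguments by the trace-via-Sobolev bound and the direct time integration, respectively; no other part of your outline needs repair.
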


\begin{proof}
Let us define
$$
I(t):=\sum_{1\le |\a|\le 2}|\y^{\si+\a_2}D^{\a}\w(t)|^2,
$$
similar to \cite{Masmoudi-Wong}, we may check that the quantity $I(t)$ satisfies:
\begin{equation*}\label{241}
\begin{aligned}
\{\p_t +\u \p_x+\v \p_y-\ep^2 \p_x^2-\p_y^2\}I(t)
\le C_{s,\ga, \si, \d}\{1+\|\p_x^s U\|_{L^\inf}^2
   +\|\y^{\si+1}\p_y \w\|_{L^\inf}^2+\|\w\|_{H^{s,\ga}_g}^2\}I(t).
\end{aligned}
\end{equation*}
This and the maximum principle in Lemma \ref{max-p} will give directly for all $t\in [0, T^\ep]$
\begin{equation*}
\|I(t)\|_{L^\inf(\O)}
\le \max\{e^{C_{s,\ga,\si,\d}(1+M_U+\O_g(t))t}\|I(0)\|_{L^\inf(\O)},
           \underset{\t \in [0, t]}{\max}\{e^{C_{s,\ga,\si,\d}(1+M_U+\O_g(t))(t-\t)}\|I(\t)|_{y=0}\|_{L^\inf(\T)}\} \},
\end{equation*}
or equivalently
\begin{equation}\label{241}
\underset{0\le \t \le t}{\sup}Q(\t)
\le e^{C_{s,\ga,\si,\d}\{1+M_U+\O_g(t)\}t}(\|I(0)\|_{L^\inf(\O)}+\|I(t)|_{y=0}\|_{L^\inf(\T)}).
\end{equation}
By virtue of $s \ge 4$, we apply the Sobolev inequality \eqref{Sobolev} to get
\begin{equation*}
\|I(t)|_{y=0}\|_{L^\inf(\T)}\le C\|\w\|_{H^{s,\ga}_g}^2.
\end{equation*}
This and the inequality \eqref{241} yield directly
\begin{equation}\label{242}
\underset{0\le \t \le t}{\sup}Q(\t)\le e^{C_{s,\ga,\si,\d}\{1+M_U+\O_g(t)\}t}(Q(0)+\|\w\|_{H^{s,\ga}_g}^2).
\end{equation}
Substituting the estimate \eqref{eq-f1} into inequality \eqref{242}, we get the inequality \eqref{eq-f2}.

Finally, using the first equation of \eqref{re-Prandtl-w}, it follows
\begin{equation}\label{eq-i}
\begin{aligned}
\|\y^{\si} \p_t\w \|_{L^\inf}
\le
&   \ep^2 \|\y^{\si} \p_x^2 \w\|_{L^\inf}+\|\y^{\si} \p_y^2 \w\|_{L^\inf}
     +\|\y^{\si}\p_x \w\|_{L^\inf}\|\u\|_{L^\inf}\\
&     +\|\y^{\si+1}\p_y \w \|_{L^\inf}\|\frac{\v}{1+y}\|_{L^\inf}\\
\le
&C(1+\|\p_x U\|_{L^\inf(\T)}^2+Q(t)+\|\w\|_{H^{s,\ga}}^2).
\end{aligned}
\end{equation}
Due to the basic fact
\begin{equation*}
\w(t,x,y)-w_0(x,y)=\int_0^t \p_\t \w(\t, x, y)d\t,
\end{equation*}
we get after using \eqref{eq-i}
\begin{equation*}
\begin{aligned}
\y^{\si}\w(t,x,y)
&\ge \y^{\si}w_0(x,y)-\int_0^t \|\y^{\si} \p_\t\w(\t)\|_{L^\inf} d\t\\
&\ge \y^{\si}w_0(x,y)-Ct (1+\underset{0\le \t \le t}{\sup}\|\p_x U(\t)\|_{L^\inf(\T)}^2
+\underset{0\le \t \le t}{\sup}\{\|\w(\t)\|_{H^{s,\ga}}^2+Q(\t)\})\\
&\ge \y^{\si}w_0(x,y)-Ct (1+M_U+\O(t)).
\end{aligned}
\end{equation*}
Therefore, we complete the proof of this lemma.
\end{proof}

From the estimates \eqref{eq-f1}, \eqref{eq-f2} and \eqref{eq-f3}, we have the estimates
\begin{equation}\label{YY-norm}
\O_g(t)
\le 2 e^{C_{s,\ga,\si,\d}\{1+M_U+\O_g(t)\}t}
\{\|w_0\|_{\mathcal{B}^{s,\ga,\si}_g}^2+C_{s,\ga,\si,\d}(1+M_U)^4t+C_{s,\ga,\si,\d}\O_g(t)^{s}t\},
\end{equation}
and
\begin{equation}\label{super-norm}
\y^{\si}\w(t,x,y)
\ge \y^{\si}w_0(x,y)-Ct (1+M_U+\O(t)).
\end{equation}
The advantage of estimates \eqref{YY-norm} and \eqref{super-norm} is  that
the constants $C$ and $C_{s,\ga,\si,\d}$ are independent of the artificial viscosity $\ep$.

\subsection{Proof of Theorem \ref{main-thereom}}

Based on the estimates obtained so far, we can complete the proof of Theorem \ref{main-thereom} in this subsection.
First of all, for any $\ep>0$, we can apply standard energy method to gain the regularity propagates
from the initial data(see estimates \eqref{c31} and \eqref{c32} in Lemma \ref{Existence-Fixed}),
that is to say on $[0, T^\ep]$, we have
\begin{equation*}
\O(T)=\underset{0\le t \le T}{\sup}{\|\w(t)\|_{\mathcal{B}^{s,\ga,\si}}^2}<+\infty.
\end{equation*}
Moreover, we can also get from the initial data that \eqref{w-d} is valid on $[0, T^\ep]$
(possibly by taking $T^\ep$ smaller).
An important remark is that if $\O(T_{ax})<+\infty$, the solution can be
continued on $[0, T_{bx}], T_{bx}>T_{ax}$ with $\O(T_{bx})<+\infty$.
This and the estimates \eqref{eq-o}-\eqref{eq-w} can guarantee that
the solution can be continued on an interval of time independent of $\ep$.
Thus, it suffices to verify the estimates \eqref{eq-o} and \eqref{eq-w}.

For two constants $R$ and $\d$, which will be defined later, we define
\begin{equation}\label{T-define}
T^\ep_*:=\sup\{T\in [0, 1]| \O(t) \le R,\quad \y^{\si} \w(t, x, y)\ge \d,\quad
\forall (t, x, y)\in [0, T]\times \T\times \mathbb{R}^+\}.
\end{equation}

Recall the relations(see \eqref{XY-norm} and \eqref{YX-norm})
\begin{equation*}
\O(t)
\le C_{\ga,\si,\d}\|\p_x^s U\|_{L^\inf(T)}^4+C_{\ga,\si,\d}(1+\O_g(t)^2),
\end{equation*}
and
\begin{equation*}
\O_g(t)\le C_{\ga,\si,\d}(1+\O(t)^2),
\end{equation*}
and hence, we get for all $t\le T^\ep$, after using the inequality \eqref{YY-norm},
\begin{equation*}
\begin{aligned}
\O(t)
\le
&C_{s,\ga,\si}\{1+\O(0)^4+M_U^2+C_{s,\ga,\si,\d}(1+M_U)^8 t^2+C_{s,\ga,\si,\d}(1+\O(t)^2)^{2s}t^2\}\\
&\times e^{C_{s, \ga, \si, \d}\{1+M_U+\O(t)^2\}t}.
\end{aligned}
\end{equation*}
Then, we may conclude for $T \le T_*^\ep$
\begin{equation*}
\begin{aligned}
\O(T)\le
&{C}_{s,\ga,\si}\{1+\O(0)^4+M_U^4+C_{s,\ga,\si,\d}(1+M_U)^8 T+C_{s,\ga,\si,\d}(1+R^2)^{2s}T\}\\
&\times e^{C_{s,\ga,\si,\d}\{1+M_U+R^2\}T}.
\end{aligned}
\end{equation*}
Choose constants $R=8{C}_{s,\ga,\si} \{1+\O(0)^4+M_U^4\}$ and $\d=\frac{\d_0}{2}$,
we get
\begin{equation*}
\O(T_1)\le 4{C}_{s,\ga,\si} \{1+\O(0)^4+M_U^4\}=\frac{R}{2},
\end{equation*}
where  $T_1:=\min\{\frac{{\rm ln} 2}{C_{s,\ga,\si,\d}(1+M_U+R^2)},
\frac{1+\O(0)^4+M_U^4}{2 C_{s,\ga,\si,\d}(1+M_U)^8},
\frac{1+\O(0)^4+M_U^4}{2C_{s,\ga,\si,\d}(1+R^2)^{2s}}\}$.
It follows from \eqref{super-norm}
\begin{equation*}
\underset{\T\times \mathbb{R}^+}{\min}\y^\si \w(t)
\ge \d_0=2\d,\quad t\in[0, T_2].
\end{equation*}
where $T_2:=\min\{T_1, \frac{\d_0}{C(1+M_U+R)}\}$.
Obviously, we conclude that there exists a time $T_2>0$ depending only on
$s,\ga,\si,\d_0, M_U$ and the initial data $\|w_0\|_{\mathcal{B}^{s,\ga,\si}}$(hence independent of $\ep$)
such that for all $T\le \min\{T_2, T^\ep\}$, the estimates \eqref{eq-o} and \eqref{eq-w}
hold on. Of course, it holds that $T_2 \le T^\ep_*$.
Otherwise, our criterion about the continuation of the solution would contradict
the definition of $T^\ep_*$ in \eqref{T-define}.
Then, taking $T_a=T_2$, we obtain the estimate \eqref{eq-w} and close the a priori assumption \eqref{w-d}.
Therefore, we complete the proof of Theorem \ref{main-thereom}.

\appendix

\section{Calculus Inequalities}\label{appendixA}

In this appendix, we will introduce some basic inequality that
be used frequently in this paper. For the proof in detail,
the interested readers can refer to \cite{Masmoudi-Wong}.

\begin{lemma}[Hardy Type Inequalities]
Let function $f:\mathbb{T}\times \mathbb{R}^+ \rightarrow \mathbb{R}$.
\begin{itemize}
\item[(i)] if $\lambda > - \frac{1}{2}$ and $ \lim_{y \to +\infty} f(x,y) = 0$, then
\begin{equation} \label{Hardy1}
    \|(1+y)^\lambda f\|_{L^2 (\mathbb{T}\times \mathbb{R}^+)} \le \frac{2}{2\lambda +1}
    \| (1+y)^{\lambda +1} \partial_y f\|_{L^2 (\mathbb{T}\times \mathbb{R}^+)}.
\end{equation}
\item[(ii)] if $\lambda < - \frac{1}{2}$, then
\begin{equation}\label{Hardy2}
    \|(1+y)^\lambda f\|_{L^2 (\mathbb{T}\times \mathbb{R}^+)}
    \le \sqrt{- \frac{1}{2\lambda +1}}
    \| f|_{y=0}\|_{L^2 (\mathbb{T})}
    - \frac{2}{2\lambda + 1}
    \| (1+y)^{\lambda + 1} \partial_y f \|_{L^2 (\mathbb{T}\times \mathbb{R}^+)}.
\end{equation}
\end{itemize}

\end{lemma}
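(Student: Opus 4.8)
The plan is to prove both inequalities by the classical device of integrating the weight $(1+y)^{2\lambda}$ against $f^2$, integrating by parts in $y$, and exploiting the sign of the resulting boundary term at $y=0$. Throughout, I would work with fixed $x\in\mathbb T$ and denote
$$
a(x):=\|(1+y)^\lambda f(x,\cdot)\|_{L^2_y(\mathbb R^+)},\qquad
b(x):=\|(1+y)^{\lambda+1}\partial_y f(x,\cdot)\|_{L^2_y(\mathbb R^+)},
$$
assuming $b(x)<\infty$ (otherwise there is nothing to prove). Writing $(1+y)^{2\lambda}=\tfrac{1}{2\lambda+1}\tfrac{d}{dy}(1+y)^{2\lambda+1}$ and integrating by parts on $[0,R]$ gives
$$
\int_0^R (1+y)^{2\lambda}f^2\,dy
=\frac{(1+R)^{2\lambda+1}f(x,R)^2-f(x,0)^2}{2\lambda+1}
-\frac{2}{2\lambda+1}\int_0^R (1+y)^{2\lambda+1}f\,\partial_y f\,dy .
$$

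For part (i), $2\lambda+1>0$, so the term $-f(x,0)^2/(2\lambda+1)$ is nonpositive and may be dropped. For the boundary term at $R$, I would argue that since $f(x,\cdot)\to 0$ and the right-hand side integral converges as $R\to\infty$, one can pick a sequence $R_n\to\infty$ along which $(1+R_n)^{2\lambda+1}f(x,R_n)^2\to 0$ (if this failed along every sequence, the integral $\int (1+y)^{2\lambda}f^2$ would diverge, contradicting finiteness, which in turn follows once the Cauchy–Schwarz bound below is in force); passing to the limit yields $a(x)^2\le \tfrac{2}{2\lambda+1}\int_0^\infty(1+y)^{2\lambda+1}|f|\,|\partial_y f|\,dy\le \tfrac{2}{2\lambda+1}a(x)b(x)$ by Cauchy–Schwarz in $y$. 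Dividing by $a(x)$ (trivial if $a(x)=0$) gives $a(x)\le\tfrac{2}{2\lambda+1}b(x)$; taking the $L^2_x$ norm and using Cauchy–Schwarz in $x$ produces \eqref{Hardy1}.

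For part (ii), $2\lambda+1<0$, so now the $y=0$ boundary term $-f(x,0)^2/(2\lambda+1)\ge 0$ must be kept, while the weight $(1+R)^{2\lambda+1}\to 0$ makes the $R$–boundary term vanish (using only that $f$ does not grow, which is part of the standing regularity assumptions). One arrives at the scalar quadratic inequality
$$
a(x)^2\le -\frac{1}{2\lambda+1}f(x,0)^2+\Bigl(-\frac{2}{2\lambda+1}\Bigr)a(x)\,b(x),
$$
and solving for $a(x)$ together with $\sqrt{p+q}\le\sqrt p+\sqrt q$ gives the pointwise bound
$$
a(x)\le -\frac{2}{2\lambda+1}\,b(x)+\sqrt{-\frac{1}{2\lambda+1}}\,|f(x,0)| .
$$
Taking the $L^2_x$ norm and using the triangle inequality yields \eqref{Hardy2}. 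I expect the only genuinely delicate point to be the rigorous justification that the boundary contribution at $y=+\infty$ vanishes in case (i); this is handled by the subsequence argument above (or, equivalently, by first establishing the estimate for $f$ with compact support in $y$ and then using a standard truncation/density argument), while in case (ii) the decay of the weight makes it automatic.
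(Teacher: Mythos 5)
Your proof is correct and follows exactly the standard integration-by-parts argument; the paper itself gives no proof of this lemma and simply defers to Masmoudi--Wong, whose proof proceeds the same way, so there is nothing to reconcile. Two small tightenings: in part (ii) the boundary term at $y=R$ equals $(1+R)^{2\lambda+1}f(x,R)^2/(2\lambda+1)\le 0$ since $2\lambda+1<0$, so it can be discarded by sign alone with no growth assumption on $f$; and in part (i) the cleanest (non-circular) justification that the boundary contribution at infinity vanishes is the pointwise bound $(1+y)^{2\lambda+1}f(x,y)^2\le\frac{1}{2\lambda+1}\int_y^{\infty}(1+z)^{2\lambda+2}|\partial_z f(x,z)|^2\,dz$, obtained by writing $f(x,y)=-\int_y^{\infty}\partial_z f(x,z)\,dz$ and applying Cauchy--Schwarz, which tends to $0$ as $y\to\infty$ whenever the right-hand side of \eqref{Hardy1} is finite.
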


\begin{lemma}[Sobolev-Type Inequality]
Let the proper function $f: \mathbb{T}\times \mathbb{R}^+ \rightarrow \mathbb{R}$.
Then there exists a universal constant $C>0$ such that
\begin{equation}\label{Sobolev}
\|f\|_{L^\inf(\mathbb{T}\times \mathbb{R}^+)}
\le C\{\|f\|_{L^2(\mathbb{T}\times \mathbb{R}^+)}
+\|\partial_x f\|_{L^2(\mathbb{T}\times \mathbb{R}^+)}
+\|\partial_y^2 f\|_{L^2(\mathbb{T}\times \mathbb{R}^+)}\}.
\end{equation}
\end{lemma}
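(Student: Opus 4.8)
The plan is to treat this as an anisotropic Sobolev embedding and to reduce it to one–dimensional estimates by expanding $f$ in its Fourier series in the periodic variable. I would write $f(x,y)=\sum_{k\in\mathbb{Z}}\widehat{f}_k(y)e^{2\pi i kx}$, so that $\|f\|_{L^\infty(\T\times\mathbb{R}^+)}\le\sum_{k}\|\widehat{f}_k\|_{L^\infty(\mathbb{R}^+)}$. Since the ``proper'' function $f$, hence every mode $\widehat{f}_k$, decays as $y\to+\infty$, the identity $|\widehat{f}_k(y)|^2=-2\,\mathrm{Re}\int_y^{+\infty}\overline{\widehat{f}_k}\,\p_y\widehat{f}_k\,dz$ yields the one–dimensional Agmon inequality $\|\widehat{f}_k\|_{L^\infty(\mathbb{R}^+)}^2\le 2\|\widehat{f}_k\|_{L^2(\mathbb{R}^+)}\|\p_y\widehat{f}_k\|_{L^2(\mathbb{R}^+)}$, and the one–dimensional interpolation inequality on the half–line $\|\p_y\widehat{f}_k\|_{L^2(\mathbb{R}^+)}\le C\|\widehat{f}_k\|_{L^2(\mathbb{R}^+)}^{1/2}\|\p_y^2\widehat{f}_k\|_{L^2(\mathbb{R}^+)}^{1/2}$ (obtained by integrating $\int_0^\infty|\p_y\widehat{f}_k|^2$ by parts in $y$ and estimating the resulting boundary term with the elementary trace bound $|g(0)|^2\le 2\|g\|_{L^2(\mathbb{R}^+)}\|\p_y g\|_{L^2(\mathbb{R}^+)}$, in the spirit of \eqref{Hardy2}) then gives $\|\widehat{f}_k\|_{L^\infty(\mathbb{R}^+)}\le C\|\widehat{f}_k\|_{L^2(\mathbb{R}^+)}^{3/4}\|\p_y^2\widehat{f}_k\|_{L^2(\mathbb{R}^+)}^{1/4}$, up to a lower–order term absorbed into $\|f\|_{L^2}$.

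Next I would sum over $k$, pulling out the square–summable factor $(1+|k|)^{-1}$ and distributing the remaining power of $1+|k|$ onto the $\|\widehat{f}_k\|_{L^2(\mathbb{R}^+)}$ factors by H\"older's inequality (with exponents $2,\,8/3,\,8$ for the mixed term, and Cauchy–Schwarz for the rest). Since Plancherel's identity gives $\sum_k(1+k^2)\|\widehat{f}_k\|_{L^2(\mathbb{R}^+)}^2\simeq\|f\|_{L^2}^2+\|\p_x f\|_{L^2}^2$ and $\sum_k\|\p_y^2\widehat{f}_k\|_{L^2(\mathbb{R}^+)}^2=\|\p_y^2 f\|_{L^2}^2$, this leads to $\|f\|_{L^\infty}\le C(\|f\|_{L^2}+\|\p_x f\|_{L^2})+C(\|f\|_{L^2}^2+\|\p_x f\|_{L^2}^2)^{3/8}\|\p_y^2 f\|_{L^2}^{1/4}$, and one closing application of Young's inequality produces the stated bound. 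An equivalent route is to extend $f$ to $\T\times\mathbb{R}$ by a second–order reflection in $y$ and use the full Fourier transform in both variables, whereupon the statement reduces to the finiteness of $\int_{\mathbb{Z}\times\mathbb{R}}(1+\xi_1^2+\xi_2^4)^{-1}\,d\xi_1\,d\xi_2$.

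The step I expect to be the real point is the redistribution of weights in the Fourier sum. A naive two–step slicing $\|f\|_{L^\infty}\le\big\|\,\|f(\cdot,y)\|_{L^\infty_x}\big\|_{L^\infty_y}$ does not work, because controlling $\sup_y\|\p_x f(\cdot,y)\|_{L^2_x}$ via an Agmon estimate in $y$ would require a factor $\|\p_x\p_y f\|_{L^2}$, which is not one of the three quantities on the right–hand side. Passing to the Fourier variable in $x$ is precisely what allows the single available $x$–derivative to be split fractionally and unevenly across the modes — which, as the convergence of the integral above reflects, is exactly enough to gain $L^\infty$ control. The boundary contributions at $y=0$ arising in the one–dimensional interpolation are all of lower order and cause no trouble, since $\|f\|_{L^2}$ already appears on the right.
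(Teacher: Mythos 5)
Your proposal is correct, and it is worth noting that the paper itself offers no proof of this lemma at all --- it simply refers the reader to \cite{Masmoudi-Wong} --- so your argument is a genuine, self-contained substitute rather than a variant of something in the text. The chain of estimates checks out: the one-dimensional Agmon bound $\|\widehat{f}_k\|_{L^\infty}^2\le 2\|\widehat{f}_k\|_{L^2}\|\p_y\widehat{f}_k\|_{L^2}$ is legitimate because $\widehat{f}_k,\p_y^2\widehat{f}_k\in L^2(\mathbb{R}^+)$ already force $\p_y\widehat{f}_k\in L^2$ and $\widehat{f}_k\to 0$ at infinity (so you do not even need to lean on the vague ``proper function'' hypothesis); the half-line Landau--Kolmogorov interpolation with its boundary term handled by the trace bound gives $\|\widehat{f}_k\|_{L^\infty}\le C(\|\widehat{f}_k\|_{L^2}+\|\widehat{f}_k\|_{L^2}^{3/4}\|\p_y^2\widehat{f}_k\|_{L^2}^{1/4})$; and your H\"older exponents $(2,8/3,8)$ produce the summable weight $\sum_k(1+|k|)^{-3/2}$, after which Plancherel and Young close the estimate. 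Your diagnosis of where the difficulty sits is also accurate on both counts: the naive slicing $\sup_y\sup_x$ would require $\|\p_x\p_y f\|_{L^2}$, which is not on the right-hand side, and the cruder $\|\widehat{f}_k\|_{L^2}^{1/2}\|\p_y^2\widehat{f}_k\|_{L^2}^{1/2}$ split would leave the divergent weight $\sum_k(1+|k|)^{-1}$, so the $3/4$--$1/4$ interpolation is not a cosmetic choice but the step that makes the mode sum converge. The only cosmetic caveat is that the reflection-based alternative you sketch needs a second-order (Hestenes-type) extension so that $\p_y^2$ of the extension stays in $L^2$; as stated that route is a remark rather than a complete argument, but the main Fourier-series proof stands on its own.
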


Next, we state the Morse type inequality that will be used frequently when
we deal with the convective term.
For the sake of brevity, we omit the proof for inequality \eqref{Morse}
since it can be guaranteed by the Sobolev inequality \eqref{Sobolev}.

\begin{lemma}[Morse-Type Inequality]
Let $f$ and $g$ be proper functions, $\ga \in \mathbb{R}$ and an integer $s\ge 3$,
we have for all $|\a+\tilde{\a}|\le s$
\begin{equation}\label{Morse}
\|\y^{\ga+\a+\tilde{\a}}(D^\a f\cdot D^{\widetilde{\a}}g)(t,\cdot)\|_{L^2}
\le C \|f(t)\|_{H^{s,\ga_1}}\|g(t)\|_{H^{s,\ga_2}},
\end{equation}
where $\ga_1, \ga_2 \in \mathbb{R}$ with $\ga_1+\ga_2=\ga$,
and $C>0$ is a universal constant.
\end{lemma}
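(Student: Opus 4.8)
The plan is to prove \eqref{Morse} by a direct Hölder splitting in which the factor of higher differential order is kept in $L^2$ while the factor of lower order is placed in $L^\inf$ through the Sobolev inequality \eqref{Sobolev}; no genuine interpolation is needed, because among the two factors the one of smaller order always has order at most $s-2$. Throughout I read the weight exponent $\ga+\a+\widetilde\a$ as $\ga+\a_2+\widetilde\a_2$, i.e. the extra power of $\y$ accompanies each $y$-derivative exactly as in the definition of the $H^{s,\ga}$ norm; this is precisely what makes the weight additive under the product.

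First I would split the weight. Since $\ga=\ga_1+\ga_2$ and the total number of $y$-derivatives in the product is $\a_2+\widetilde\a_2$, one has the pointwise identity
$$\y^{\ga+\a_2+\widetilde\a_2}D^\a f\, D^{\widetilde\a}g=\bigl(\y^{\ga_1+\a_2}D^\a f\bigr)\bigl(\y^{\ga_2+\widetilde\a_2}D^{\widetilde\a}g\bigr).$$
By the symmetry of the statement under $(f,\a,\ga_1)\leftrightarrow(g,\widetilde\a,\ga_2)$ I may assume $|\a|\ge|\widetilde\a|$. Then $2|\widetilde\a|\le|\a|+|\widetilde\a|\le s$, so $|\widetilde\a|\le\lfloor s/2\rfloor$, and a one-line check gives $\lfloor s/2\rfloor\le s-2$ for every integer $s\ge 3$. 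Hölder's inequality ($L^2\times L^\inf$) then yields
$$\|\y^{\ga+\a_2+\widetilde\a_2}D^\a f\,D^{\widetilde\a}g\|_{L^2}\le\|\y^{\ga_1+\a_2}D^\a f\|_{L^2}\,\|\y^{\ga_2+\widetilde\a_2}D^{\widetilde\a}g\|_{L^\inf},$$
and the first factor is controlled directly by $\|f\|_{H^{s,\ga_1}}$, since it is exactly the $\a$-term of that norm and $|\a|\le s$.

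It remains to bound the $L^\inf$ factor, and here I would apply \eqref{Sobolev} to $F:=\y^{\ga_2+\widetilde\a_2}D^{\widetilde\a}g$, giving $\|F\|_{L^\inf}\le C(\|F\|_{L^2}+\|\p_x F\|_{L^2}+\|\p_y^2 F\|_{L^2})$. The first two terms are of order $|\widetilde\a|$ and $|\widetilde\a|+1$ and carry at most the natural $H^{s,\ga_2}$ weight, hence are $\le\|g\|_{H^{s,\ga_2}}$. For $\p_y^2 F$ I would expand by Leibniz: the three resulting terms carry weights $\y^{\ga_2+\widetilde\a_2}$, $\y^{\ga_2+\widetilde\a_2-1}$, $\y^{\ga_2+\widetilde\a_2-2}$ against $D^{\widetilde\a+2e_2}g$, $D^{\widetilde\a+e_2}g$, $D^{\widetilde\a}g$ respectively, and since $\y\ge 1$ each weight is dominated by the natural exponent ($\ga_2+\widetilde\a_2+2$, $\ga_2+\widetilde\a_2+1$, $\ga_2+\widetilde\a_2$) of the corresponding term of $\|g\|_{H^{s,\ga_2}}$. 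Because $|\widetilde\a|+2\le s$, every term that appears has order at most $s$, so $\|F\|_{L^\inf}\le C_{s,\ga}\|g\|_{H^{s,\ga_2}}$, and \eqref{Morse} follows.

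The argument is essentially elementary; the one point that requires care — and the one I regard as the crux — is the pigeonhole observation $|\widetilde\a|\le\lfloor s/2\rfloor\le s-2$, which guarantees that applying \eqref{Sobolev} (which costs two extra derivatives) to the lower-order factor never pushes its order beyond $s$. This is exactly where the hypothesis $s\ge 3$ is used. A secondary bookkeeping matter is that the constant produced this way depends on $s$ and on $\ga_2$ through the Leibniz coefficients; for fixed $\ga$, as in every application in the paper, it is a harmless fixed constant, which is the sense in which $C$ is ``universal''.
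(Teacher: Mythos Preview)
Your proof is correct and follows exactly the route the paper indicates: the paper omits the proof entirely, stating only that \eqref{Morse} ``can be guaranteed by the Sobolev inequality \eqref{Sobolev}'', and your H\"older splitting with the lower-order factor placed in $L^\infty$ via \eqref{Sobolev} is precisely that argument carried out in detail. Your identification of the pigeonhole step $|\widetilde\a|\le\lfloor s/2\rfloor\le s-2$ as the place where $s\ge 3$ enters, and your remark on the dependence of $C$ on $s$ and $\ga_2$, are both accurate.
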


Finally, let us recall the maximum principle for bounded solutions to parabolic
equations(cf.\cite{Masmoudi-Wong}).

\begin{lemma}[Maximum Principle for Parabolic Equations] \label{max-p}
Let $\ep \ge 0$. If $H \in C([0,T]; C^2 (\T \times \mathbb{R}^+) \cap C^1 ([0,T]; C^0 (\T \times \mathbb{R}^+))$
is a bounded function which satisfies the differential inequality:
$$
    \{\p_t+b_1\p_x+b_2\p_y-\ep^2\p_{xx} - \p_{yy}\} H \le f H \qquad\qquad \text{ in }
    [0,T] \times \T \times \mathbb{R}^+,
$$
where the coefficients $b_1, b_2$ and $f$ are continuous and satisfy
\begin{equation*}
    \left\| \frac{b_2}{1+y} \right\|_{L^\infty ([0,T] \times \T \times \mathbb{R}^+)}  < + \infty
    \qquad \text{ and } \qquad \|f\|_{L^\infty ([0,T] \times \T \times \mathbb{R}^+)}  \le \lambda,
\end{equation*}
then for any $t \in [0,T]$,
\begin{equation*}
    \sup_{\T \times \mathbb{R}^+} H(t) \le \max \{e^{\lambda t} \|H(0)\|_{L^\infty (\T \times \mathbb{R}^+)},
    \max_{\tau \in [0,t]} \{e^{\lambda (t-\tau)} \|H(\tau) |_{y=0}
    \|_{L^\infty (\T)} \} \}.
\end{equation*}
\end{lemma}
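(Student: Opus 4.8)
The plan is to run the classical barrier (comparison-function) argument for parabolic maximum principles on the \emph{unbounded} strip $\T\times\mathbb{R}^+$. First I would remove the zeroth-order term by setting $G:=e^{-\lambda t}H$; since $e^{-\lambda t}$ is independent of $(x,y)$, a one-line computation turns the hypothesis into
\[
\{\p_t+b_1\p_x+b_2\p_y-\ep^2\p_x^2-\p_y^2\}G\le (f-\lambda)G,
\]
with $f-\lambda\le0$ pointwise. Because $e^{\pm\lambda t}$ is bounded on $[0,T]$, it then suffices to prove
\[
\sup_{\T\times\mathbb{R}^+}G(t)\ \le\ N(t):=\max\Big\{\|G(0)\|_{L^\inf(\T\times\mathbb{R}^+)},\ \max_{0\le\t\le t}\|G(\t)|_{y=0}\|_{L^\inf(\T)}\Big\},
\]
a nonnegative nondecreasing function of $t$; undoing the substitution at the end restores the factors $e^{\lambda t}$ and $e^{\lambda(t-\t)}$ exactly as stated.

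\emph{The barrier.} Let $C_0:=\|b_2/\y\|_{L^\inf}<\inf$, fix $K:=2C_0+1$, and set $\phi(t,y):=e^{Kt}\big(\y^2+\tfrac{2}{K}\big)$. Using $|b_2|\le C_0\y$ one checks directly that $\{\p_t+b_2\p_y-\p_y^2\}\phi\ge e^{Kt}\y^2>0$; since $\phi$ is independent of $x$, it is a strict supersolution of the full operator, and it satisfies $\phi(t,y)\ge\y^2\to+\inf$ uniformly for $t\in[0,T]$.

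\emph{Comparison and conclusion.} Fix $T'\in(0,T]$ and $\d>0$, and consider $v:=G-N(T')-\d\phi$ on $[0,T']\times\T\times\mathbb{R}^+$. Since $G$ is bounded and $\d\phi\to+\inf$ in $y$ uniformly in $(t,x)$, there is $Y>0$ with $v<0$ for $y\ge Y$, so $v$ attains its maximum over the compact slab $[0,T']\times\T\times[0,Y]$ (periodicity in $x$ removes the lateral boundary). Suppose the maximum were positive, attained at $(t_0,x_0,y_0)$. At a maximiser with $t_0=0$ or $y_0=0$, the definition of $N$ gives $v\le -\d\phi\le0$ immediately, contradicting positivity; hence $t_0\in(0,T']$ and $y_0>0$. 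There the calculus tests give $\p_xv=\p_yv=0$, $\p_x^2v\le0$, $\p_y^2v\le0$, and $\p_tv\ge0$ (a one-sided test at $t_0=T'$ being legitimate as $v$ is $C^1$ in $t$), whence $\{\p_t+b_1\p_x+b_2\p_y-\ep^2\p_x^2-\p_y^2\}v\ge0$. But at that point $G=v+N(T')+\d\phi>0$, so $(f-\lambda)G\le0$, and the first two steps force $\{\p_t+b_1\p_x+b_2\p_y-\ep^2\p_x^2-\p_y^2\}v\le(f-\lambda)G-\d\{\p_t+b_2\p_y-\p_y^2\}\phi<0$, a contradiction. Hence $v\le0$, i.e. $G(t,x,y)\le N(T')+\d\phi(t,y)$ for $t\le T'$; sending $\d\to0^+$ and using that $T'\in(0,T]$ is arbitrary yields $\sup G(t)\le N(t)$ for all $t$, and multiplying by $e^{\lambda t}$ recovers the stated bound.

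The step I expect to be the genuine obstacle is the construction of the barrier: the drift $b_2\p_y$ is controlled only by the weak growth bound $|b_2|\lesssim\y$, so $\phi$ must dominate a coefficient growing linearly in $y$ using the single dissipative term $-\p_y^2$ while still tending to $+\inf$ as $y\to+\inf$; the quadratic weight $\y^2$ with an exponential-in-time prefactor (with $K$ tuned to $C_0$) is precisely the balance that achieves this. Everything else — the reduction to $f\equiv\lambda$ and the slab compactness together with the derivative test — is routine.
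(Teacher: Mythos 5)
Your proof is correct and complete: the reduction $G=e^{-\lambda t}H$, the barrier $\phi=e^{Kt}\bigl(\y^2+\tfrac{2}{K}\bigr)$ with $K=2C_0+1$ (which indeed satisfies $\{\p_t+b_2\p_y-\p_y^2\}\phi\ge e^{Kt}\y^2>0$ under $|b_2|\le C_0\y$), and the compact-slab comparison argument all check out, including the sign point that $(f-\lambda)G\le 0$ only needs to hold at the putative positive maximum where $G>0$. The paper itself does not prove this lemma --- it quotes it from \cite{Masmoudi-Wong} --- and your argument is essentially the same standard barrier/comparison proof used there, so there is nothing to flag.
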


\section{Almost Equivalence of Weighted Norms}\label{appendixB}

In this section, we will state some estimates that will be used in section \ref{a priori estimate}.
First of all, we derive the relation between $\g_s$ and $\p_x^s \w$ as follows.

\begin{lemma}
Let $s \ge 4$ be an even integer, $\gamma \ge1, \sigma\ge \gamma+\frac{1}{2}$,
and $\ep \in (0, 1]$, the smooth solution $(\u, \v, \w)$, defined on $[0, T^\ep]$,
to the regularized Prandtl equations \eqref{re-Prandtl-w}-\eqref{re-Prandtl-v}.
There exists a small constant $\d \in (0, 1)$ such that
$\y^\si \w \ge \d, ~ \forall (t, x, y) \in [0 ,T^\ep] \times \mathbb{T} \times \mathbb{R}^+$, then it holds on
\begin{equation}\label{b11}
\|\y^\ga \g_s\|_{L^2}
\le C_{\ga, \d}(1+\|\y^{\si+1}\p_y \w\|_{L^\inf})\|\y^\ga \p_x^s \w\|_{L^2},
\end{equation}
and
\begin{equation}\label{b12}
\|\y^\ga \p_x^s \w\|_{L^2}
\le C_{\ga,\si,\d}(1+\|\p_x^s U\|_{L^\inf(T)}+\|\y^{\si+1}\p_y \w\|_{L^\inf})\|\w\|_{H^{s,\ga}_g},
\end{equation}
where $\g_s:=\p_x^s \w-\frac{\p_y \w}{\w} \p_x^s(\u-U)$.
\end{lemma}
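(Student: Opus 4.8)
The two inequalities are essentially dual algebraic manipulations of the identity $\g_s=\p_x^s\w-a^\ep\,\p_x^s(\u-U)$ with $a^\ep=\p_y\w/\w$, combined with the Hardy inequality \eqref{Hardy1} to absorb the derivative that appears when one trades $\p_x^s(\u-U)$ for $\p_x^s\w=\p_x^s\p_y u$. The plan for \eqref{b11} is to bound the second term of $\g_s$ directly: write
\begin{equation*}
\|\y^\ga a^\ep\,\p_x^s(\u-U)\|_{L^2}
\le \|\y\,a^\ep\|_{L^\inf}\,\|\y^{\ga-1}\p_x^s(\u-U)\|_{L^2},
\end{equation*}
use $\|\y\,a^\ep\|_{L^\inf}=\|\y\,\p_y\w/\w\|_{L^\inf}\le \d^{-1}\|\y^{\si+1}\p_y\w\|_{L^\inf}$ from the lower bound $\y^\si\w\ge\d$, and then apply the Hardy inequality \eqref{Hardy1} with $\lambda=\ga-1>-\tfrac12$ to the function $f=\p_x^s(\u-U)$, which vanishes as $y\to+\inf$ since $\u\to U$; this gives $\|\y^{\ga-1}\p_x^s(\u-U)\|_{L^2}\le C_\ga\|\y^\ga\p_x^s\p_y(\u-U)\|_{L^2}=C_\ga\|\y^\ga\p_x^s\w\|_{L^2}$. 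Adding $\|\y^\ga\p_x^s\w\|_{L^2}$ itself yields \eqref{b11}.

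For \eqref{b12} the roles are reversed: now $\p_x^s\w=\g_s+a^\ep\p_x^s(\u-U)$, and the first term is controlled by $\|\y^\ga\g_s\|_{L^2}\le\|\w\|_{H^{s,\ga}_g}$ by definition \eqref{Hg} of the $H^{s,\ga}_g$-norm. For the second term one again needs to estimate $\|\y^{\ga-1}\p_x^s(\u-U)\|_{L^2}$, but this time only in terms of $\|\w\|_{H^{s,\ga}_g}$, which does \emph{not} a priori contain the full quantity $\y^\ga\p_x^s\w$ (that is the whole point of $\g_s$). Here the trick is to use the identity obtained by differentiating $\u-U=-\int_y^\inf\w\,d\eta$, namely $\p_x^{s}(\u-U)=-\p_x^{s-1}\int_y^\inf\p_x\w\,d\eta$ so that $\p_x^{s-1}\w$ with the loss of one $x$-derivative appears — and $D^\a\w$ with $\a_1\le s-1$, $|\a|\le s$ \emph{is} included in $\|\w\|_{H^{s,\ga}_g}$. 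Concretely, I would write $\p_x^s(\u-U)=\p_x^s U\big|_{y\to\inf}$-type boundary terms plus $\int$ of $\p_x^{s-1}\p_y\w=\p_x^{s-1}\p_y\w$ ... more cleanly: apply Hardy \eqref{Hardy2} or a direct $\int_0^y$ estimate to reduce $\|\y^{\ga-1}\p_x^s(\u-U)\|_{L^2}$ to $\|\p_x^s U\|_{L^2(\T)}\lesssim\|\p_x^sU\|_{L^\inf(\T)}$ (from the trace at $y=0$, since $(\u-U)|_{y=0}=-U$) plus $\|\y^\ga\p_x^{s-1}\p_y\w\|_{L^2}\le\|\w\|_{H^{s,\ga}_g}$. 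Then multiply by $\|\y\,a^\ep\|_{L^\inf}\le\d^{-1}\|\y^{\si+1}\p_y\w\|_{L^\inf}$ as before, and collect constants to obtain \eqref{b12}.

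\textbf{Main obstacle.} The delicate point is \eqref{b12}: one must estimate $\|\y^{\ga-1}\p_x^s(\u-U)\|_{L^2}$ using only quantities in $\|\w\|_{H^{s,\ga}_g}$, i.e.\ using only $D^\a\w$ with at most $s-1$ pure $x$-derivatives, never $\p_x^s\w$ itself — otherwise the estimate is circular and the nonlinear cancellation provided by $\g_s$ is wasted. The resolution is to integrate $\u-U$ in $y$ so that the $y$-derivative appearing in the Hardy bound is spent producing $\p_x^{s-1}\p_y\w$ (admissible) rather than $\p_x^s\w$, handling the resulting boundary contribution at $y=0$ by the trace $(\u-U)|_{y=0}=-U$ together with the one-dimensional Sobolev inequality. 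Everything else is bookkeeping with Hölder, the weighted $L^\inf$ bound $\|\y\,a^\ep\|_{L^\inf}\le\d^{-1}\|\y^{\si+1}\p_y\w\|_{L^\inf}$, and the Hardy inequalities from Appendix \ref{appendixA}.
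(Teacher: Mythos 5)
Your proof of \eqref{b11} is exactly the paper's argument: split off $a^\ep\p_x^s(\u-U)$, bound $\|\y a^\ep\|_{L^\inf}\le\d^{-1}\|\y^{\si+1}\p_y\w\|_{L^\inf}$ using $\y^\si\w\ge\d$, and apply Hardy \eqref{Hardy1} to $\p_x^s(\u-U)$, which decays at infinity. That half is correct.

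For \eqref{b12} there is a genuine gap. You must control $\|\y^{\ga-1}\p_x^s(\u-U)\|_{L^2}$ without ever invoking $\p_x^s\w$, and you propose to do this by integrating $\u-U$ in $y$ so that only $\p_x^{s-1}\p_y\w$ appears. But the $y$-derivative of $\p_x^s(\u-U)$ is $\p_x^s\p_y\u=\p_x^s\w$, not $\p_x^{s-1}\p_y\w$: integrating in $y$ does not lower the tangential derivative count, and your identity $\p_x^{s}(\u-U)=-\p_x^{s-1}\int_y^\inf\p_x\w\,d\eta=-\int_y^\inf\p_x^s\w\,d\eta$ still carries $s$ $x$-derivatives on $\w$. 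Consequently any bound of the form $\|\y^{\ga-1}f\|_{L^2}\lesssim\|f|_{y=0}\|_{L^2(\T)}+\|\y^{\ga}\p_yf\|_{L^2}$ applied to $f=\p_x^s(\u-U)$ reproduces $\|\y^\ga\p_x^s\w\|_{L^2}$ on the right-hand side and the argument becomes circular. (Moreover, the boundary-term Hardy inequality \eqref{Hardy2} is only available for weight exponent $\lambda<-\frac12$, whereas here $\lambda=\ga-1\ge0$, so even the form of the inequality you invoke is not at your disposal.)

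The paper's resolution is different and is the one essential idea of the lemma: from the definition of $\g_s$ one has the exact identity $\g_s=\w\,\p_y\{\p_x^s(\u-U)/\w\}$, so integrating from $0$ to $y$ and using $\u|_{y=0}=0$ gives
\begin{equation*}
\frac{\p_x^s(\u-U)}{\w}=-\frac{\p_x^s U}{\w|_{y=0}}+\int_0^y\frac{\g_s}{\w}\,d\xi .
\end{equation*}
The dangerous term $\y^\ga\p_y\w\cdot\frac{\p_x^s(\u-U)}{\w}$ is then bounded by $\d^{-1}\|\p_x^sU\|_{L^\inf(\T)}\|\y^\ga\p_y\w\|_{L^2}$ plus $\|\y^{\si+1}\p_y\w\|_{L^\inf}\,\|\y^{\ga-\si-1}\int_0^y\g_s/\w\,d\xi\|_{L^2}$, and the last factor is handled by Hardy \eqref{Hardy2} --- now legitimately, since $\ga-\si-1<-\frac12$ and the integral vanishes at $y=0$ --- yielding $C\|\y^{\ga-\si}\g_s/\w\|_{L^2}\le C\d^{-1}\|\y^\ga\g_s\|_{L^2}\le C\d^{-1}\|\w\|_{H^{s,\ga}_g}$. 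In other words, $\p_x^s(\u-U)$ is recovered from $\g_s$ itself (plus the outer-flow trace at $y=0$), not from lower-order derivatives of $\w$; that is precisely what makes the estimate non-circular.
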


\begin{proof}
Using the definition $g_s^\ep =\p_x^s w^\ep -\frac{\p_y w^\ep}{w^\ep}\p_x^s (u^\es -U)$
and Hardy inequality \eqref{Hardy2}, it follows
\begin{equation*}
\begin{aligned}
\|\y^\ga \g_s\|_{L^2}
&\le \|\y^\ga \p_x^s \w\|_{L^2}+\|\y^\ga \frac{\p_y \w}{\w}\p_x^s (\u-U)\|_{L^2}\\
&\le \|\y^\ga \p_x^s \w\|_{L^2}
     +\|\frac{\y^{\si+1} \p_y \w}{\y^\si \w}\|_{L^\infty}\|\y^{\ga-1}\p_x^s(\u-U)\|_{L^2}\\
&\le \|\y^\ga \p_x^s \w\|_{L^2}
     +C_{\ga, \d}\|\y^{\si+1} \p_y \w\|_{L^\infty}\|\y^{\ga}\p_x^s \w\|_{L^2},
\end{aligned}
\end{equation*}
where we have used $\y^\si \w \ge \d$, and hence, we get
\begin{equation*}
\|\y^\ga \g_s\|_{L^2}
\le C_{\ga, \d}(1+\|\y^{\si+1} \p_y \w\|_{L^\infty})\|\y^{\ga}\p_x^s \w\|_{L^2}.
\end{equation*}
This implies inequality \eqref{b11}.
On the other hand, we get from the definition of $g_s^\ep$ that
\begin{equation}\label{b13}
\|\y^\ga \p_x^s \w\|_{L^2}
\le \|\y^\ga \g_s\|_{L^2}+\|\y^\ga \p_y \w \frac{\p_x^s(\u-U)}{\w}\|_{L^2}
\end{equation}
By routine checking, it follows the relation $\g_s=\w \p_y\{\frac{\p_x^s(\u-U)}{\w}\}$,
and hence using $\u|_{y=0}=0$, we get
$$
\frac{\p_x^s(\u-U)}{\w}=-\frac{\p_x^s U}{\w|_{y=0}}+\int_0^y \frac{\g_s}{\w}d\xi.
$$
This and the condition $\y^\si \w \ge \d$ yield directly
\begin{equation}\label{b14}
\begin{aligned}
&\|\y^\ga \p_y \w \frac{\p_x^s(\u-U)}{\w}\|_{L^2}\\
&\le \d^{-1}\|\p_x^s U\|_{L^\inf(\T)}\|\y^\ga \p_y \w \|_{L^2}
+\|\y^{\si+1} \p_y \w \|_{L^\inf}
 \|\y^{\ga-\si-1}\int_0^y \frac{\g_s}{\w} d\xi\|_{L^2}\\
&\le C_\d\|\p_x^s U\|_{L^\inf(\T)}\|\y^\ga \p_y \w \|_{L^2}
+C_{\ga,\si,\d}\|\y^{\si+1} \p_y \w \|_{L^\inf}\|\y^{\ga} \g_s \|_{L^2}.
\end{aligned}
\end{equation}
Plugging inequality \eqref{b14} into \eqref{b13}, we obtain the inequality \eqref{b12}.
\end{proof}

Based on the inequalities \eqref{b11}-\eqref{b12}, and the definitions of
$H^{s, \ga}$ and $H^{s, \ga}_g$, we can establish the following estimates,
which are important relation for us to obtain the well-posedness for the
Prandtl equations in Sobolev space.

\begin{lemma}
Let $s \ge 4$ be an even integer, $\gamma \ge1, \sigma\ge \gamma+\frac{1}{2}$,
and $\ep \in (0, 1]$, the smooth solution $(\u, \v, \w)$, defined on $[0, T^\ep]$,
to the regularized Prandtl equations \eqref{re-Prandtl-w}-\eqref{re-Prandtl-v}.
There exists a small constant $\d \in (0, 1)$ such that
$\y^\si \w \ge \d, ~ \forall (t, x, y) \in [0 ,T^\ep] \times \mathbb{T} \times \mathbb{R}^+$, then it holds on
\begin{equation}\label{XY-norm}
\O(t)
\le C_{\ga,\si,\d}\|\p_x^s U\|_{L^\inf(T)}^4+C_{\ga,\si,\d}(1+\O_g(t)^2),
\end{equation}
and
\begin{equation}\label{YX-norm}
\O_g(t) \le C_{\ga,\si,\d}(1+\O(t)^2).
\end{equation}
\end{lemma}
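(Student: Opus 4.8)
The plan is to prove the two estimates \eqref{XY-norm} and \eqref{YX-norm} by unwinding the definitions of $\mathcal{B}^{s,\ga,\si}$, $\mathcal{B}^{s,\ga,\si}_g$, $H^{s,\ga}$ and $H^{s,\ga}_g$, and then inserting the already-established bounds \eqref{b11}--\eqref{b12} together with elementary weighted interpolation. Observe first that the $L^\inf$ parts of the two norms, namely $Q(t)=\sum_{1\le|\a|\le 2}\|\y^{\si+\a_2}D^\a\w\|_{L^\inf}^2$, are \emph{literally the same} in $\O(t)$ and $\O_g(t)$; so the whole issue is to compare the $L^2$ parts $\|\w\|_{H^{s,\ga}}^2$ and $\|\w\|_{H^{s,\ga}_g}^2$, up to the outer-flow data. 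The quantity $\|\y^{\si+1}\p_y\w\|_{L^\inf}$ appearing on the right of \eqref{b11}--\eqref{b12} is controlled by $Q(t)^{1/2}\le\O_g(t)^{1/2}$ (resp. $\le\O(t)^{1/2}$), and the one-dimensional Sobolev inequality \eqref{Sobolev} bounds $\|\p_x^s U\|_{L^\inf(\T)}$ by the assumed $M_U$; I will carry these absorptions along silently.

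For \eqref{XY-norm}: by definition $\|\w\|_{H^{s,\ga}}^2=\sum_{|\a|\le s}\|\y^{\ga+\a_2}D^\a\w\|_{L^2}^2$. Every multi-index $\a$ with $|\a|\le s$ and $\a_1\le s-1$ is already one of the terms in $\|\w\|_{H^{s,\ga}_g}^2$, so those contribute $\le\|\w\|_{H^{s,\ga}_g}^2$ directly. The only term missing is $\a=(s,0)$, i.e. $\|\y^\ga\p_x^s\w\|_{L^2}^2$, and this is exactly what \eqref{b12} estimates: $\|\y^\ga\p_x^s\w\|_{L^2}\le C_{\ga,\si,\d}(1+\|\p_x^s U\|_{L^\inf(\T)}+\|\y^{\si+1}\p_y\w\|_{L^\inf})\|\w\|_{H^{s,\ga}_g}$. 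Squaring, using $\|\y^{\si+1}\p_y\w\|_{L^\inf}^2\le Q(t)\le\O_g(t)$ and Young's inequality on the product with $\|\w\|_{H^{s,\ga}_g}^2\le\O_g(t)$, gives a bound of the form $C_{\ga,\si,\d}\|\p_x^s U\|_{L^\inf(\T)}^4+C_{\ga,\si,\d}(1+\O_g(t)^2)$. Adding back $Q(t)\le\O_g(t)$ and taking the supremum over $\t\le t$ yields \eqref{XY-norm} after replacing $\|\p_x^s U\|_{L^\inf(\T)}^4$ by its sup in time (which is what enters the statement).

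For \eqref{YX-norm}: now $\|\w\|_{H^{s,\ga}_g}^2=\|\y^\ga\g_s\|_{L^2}^2+\sum_{|\a|\le s,\ \a_1\le s-1}\|\y^{\ga+\a_2}D^\a\w\|_{L^2}^2$. The sum is a subset of the terms defining $\|\w\|_{H^{s,\ga}}^2$, hence $\le\|\w\|_{H^{s,\ga}}^2\le\O(t)$. The remaining term $\|\y^\ga\g_s\|_{L^2}^2$ is handled by \eqref{b11}: $\|\y^\ga\g_s\|_{L^2}\le C_{\ga,\d}(1+\|\y^{\si+1}\p_y\w\|_{L^\inf})\|\y^\ga\p_x^s\w\|_{L^2}$, and $\|\y^\ga\p_x^s\w\|_{L^2}\le\|\w\|_{H^{s,\ga}}$ while $\|\y^{\si+1}\p_y\w\|_{L^\inf}^2\le Q(t)\le\O(t)$; squaring and applying Young gives $\|\y^\ga\g_s\|_{L^2}^2\le C_{\ga,\d}(1+\O(t)^2)$. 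Finally adding $Q(t)\le\O(t)$ and taking the supremum over $\t\le t$ produces \eqref{YX-norm}.

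I do not expect a genuine obstacle here: both inequalities are bookkeeping on top of \eqref{b11}--\eqref{b12}, the elementary identity $Q$ occurs on both sides, and the one place requiring a little care is making sure the nonlinear term $\|\y^{\si+1}\p_y\w\|_{L^\inf}\cdot\|\w\|_{H^{s,\ga}_g}$ is split by Young's inequality so the exponent on $\O_g(t)$ (resp. $\O(t)$) comes out as $2$ rather than something larger — this is forced by the way $Q(t)$ dominates $\|\y^{\si+1}\p_y\w\|_{L^\inf}^2$, since the second-order $L^\inf$ norms in $Q$ already control the first-order one after shifting the weight. The only modest subtlety is the asymmetry that $\|\p_x^s U\|_{L^\inf(\T)}$ appears explicitly in \eqref{XY-norm} but is absorbed into constants in \eqref{YX-norm}; this is harmless because in \eqref{YX-norm} the outer flow never enters \eqref{b11}, whereas \eqref{b12} (used for \eqref{XY-norm}) genuinely needs it through the boundary value $\w|_{y=0}$.
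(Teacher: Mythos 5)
Your proposal is correct and follows essentially the same route as the paper: the paper likewise derives the pointwise comparison $\|\w\|_{H^{s,\ga}}\le C_{\ga,\si,\d}(1+\|\p_x^s U\|_{L^\inf(\T)}+\|\y^{\si+1}\p_y \w\|_{L^\inf})\|\w\|_{H^{s,\ga}_g}$ from \eqref{b12} (and its converse from \eqref{b11}), notes that the $L^\inf$ part $Q(t)$ is common to both norms with $\|\y^{\si+1}\p_y\w\|_{L^\inf}^2\le Q(t)$, and closes by the Cauchy--Young inequality to get the quadratic powers of $\O_g(t)$ and $\O(t)$. The only cosmetic difference is that you spell out the multi-index bookkeeping (isolating $\a=(s,0)$) that the paper leaves implicit in the phrase ``using the definition of $\|\cdot\|_{H^{s,\ga}_g}$.''
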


\begin{proof}
Using the definition of $\|\cdot\|_{H^{s,\ga}_g}$ and estimate \eqref{b12}, it follows
\begin{equation}\label{b21}
\|\w\|_{H^{s,\ga}}
\le C_{\ga,\si,\d}(1+\|\p_x^s U\|_{L^\inf(T)}+\|\y^{\si+1}\p_y \w\|_{L^\inf})\|\w\|_{H^{s,\ga}_g},
\end{equation}
which along with Cauchy inequality implies directly
\begin{equation*}
\begin{aligned}
\|\w(t)\|_{\mathcal{B}^{s,\ga,\si}}^2
\le
&C_{\ga,\si,\d}(1+\|\p_x^s U\|_{L^\inf(T)}^2+\|\y^{\si+1}\p_y \w\|_{L^\inf}^2)\|\w\|_{H^{s,\ga}_g}^2\\
&+\sum_{1\le |\a|\le 2}\|\y^{\si+\a_2}D^\a \w(t)\|_{L^\inf}^2\\
&\le C_{\ga,\si,\d}\|\p_x^s U\|_{L^\inf(T)}^4+C_{\ga,\si,\d}(1+\|\w(t)\|_{\mathcal{B}_g^{s,\ga,\si}}^4).
\end{aligned}
\end{equation*}
This yields inequality \eqref{XY-norm}.
Similarly, it follows from inequality \eqref{b11} that
\begin{equation}\label{b22}
\|\w\|_{H^{s,\ga}_g}
\le C_{\ga, \si, \d}(1+\|\y^{\si+1}\p_y \w\|_{L^\inf})\|\w\|_{H^{s,\ga}},
\end{equation}
and hence, we obtain
\begin{equation*}
\|\w(t)\|_{\mathcal{B}_g^{s,\ga,\si}}^2 \le C_{\ga, \si, \d}(1+\|\w(t)\|_{\mathcal{B}^{s,\ga,\si}}^4).
\end{equation*}
Then we prove the inequality \eqref{YX-norm}.
\end{proof}

Next, we establish some estimates for the quantity $(\u, \v, \g_k)$ in weighted $L^2-$norm.

\begin{lemma}
Let $s \ge 4$ be an even integer, $\gamma \ge1, \sigma\ge \gamma+\frac{1}{2}$,
and $\ep \in (0, 1]$, the smooth solution $(\u, \v, \w)$, defined on $[0, T^\ep]$,
to the regularized Prandtl equations \eqref{re-Prandtl-w}-\eqref{re-Prandtl-v}.
There exists a small constant $\d \in (0, 1)$ such that
$\y^\si \w \ge \d, ~ \forall (t, x, y) \in [0 ,T^\ep] \times \mathbb{T} \times \mathbb{R}^+$, then it holds on: \\
{\rm(i)}For all $k=0,1,...,s$,
\begin{equation}\label{b31}
\|\y^{\ga-1}\p_x^k(\u-U)\|_{L^2}
\le C_{\ga,\si,\d}(1+\|\p_x^s U\|_{L^\inf(\T)}+\|\y^{\si+1}\p_y \w\|_{L^\inf})\|\w\|_{H^{s,\ga}_g},
\end{equation}
{\rm(ii)}For all $k=0,1,...,s-1$,
\begin{equation}\label{b32}
\|\frac{\p_x^k \v+y \p_x^{k+1}U}{1+y}\|_{L^2}
\le C_{\ga,\si,\d}(1+\|\p_x^s U\|_{L^\inf(\T)}+\|\y^{\si+1}\p_y \w\|_{L^\inf})\|\w\|_{H^{s,\ga}_g},
\end{equation}
{\rm(iii)}For all $k=1,2,...,s$
\begin{equation}\label{b33}
\|\y^\ga \g_k\|_{L^2}
\le C_{\ga,\d}(1+\|\y^{\si+1}\p_y \w\|_{L^\inf})\|\w\|_{H^{s,\ga}_g}.
\end{equation}
\end{lemma}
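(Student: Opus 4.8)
\textbf{Proof plan for the Lemma (estimates \eqref{b31}--\eqref{b33}).}
The plan is to exploit two structural facts throughout: first, $\u-U=-\int_y^{+\infty}\w\,d\eta$ vanishes at $y=+\infty$, so the Hardy inequality \eqref{Hardy1} applies directly to each $\p_x^k(\u-U)$; second, $\v+y\p_x U=-\int_0^y\p_x(\u-U)\,d\eta$ (using $\p_x\u=\p_x(\u-U)$ and $\v=-\int_0^y\p_x\u\,d\eta$), which will let me relate the weighted $L^2$ norm of $\frac{\p_x^k\v+y\p_x^{k+1}U}{1+y}$ back to an integral of $\p_x^{k+1}(\u-U)$, again killable by Hardy. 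Once these reductions are in place, every right-hand side is controlled by $\|\y^{\ga+\a_2}D^\a\w\|_{L^2}$ with $\a_1\le s-1$ and by $\|\y^\ga\p_x^s\w\|_{L^2}$, and then estimate \eqref{b12} converts the latter into $\|\w\|_{H^{s,\ga}_g}$, producing the stated constants and the $(1+\|\p_x^sU\|_{L^\inf(\T)}+\|\y^{\si+1}\p_y\w\|_{L^\inf})$ factor.

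For part (i), I would split into $k\le s-1$ and $k=s$. For $k\le s-1$ write $\y^{\ga-1}\p_x^k(\u-U)$ and apply Hardy \eqref{Hardy1} with $\lambda=\ga-1>-\tfrac12$ to bound it by $C_\ga\|\y^{\ga}\p_x^k\w\|_{L^2}$, which is a term already present in $\|\w\|_{H^{s,\ga}}$ (since $k\le s-1$) and hence in $\|\w\|_{H^{s,\ga}_g}$; no $U$ or $\p_y\w$ factor is even needed here. For $k=s$ the same Hardy step gives $C_\ga\|\y^\ga\p_x^s\w\|_{L^2}$, and now I invoke \eqref{b12} to absorb it into $\|\w\|_{H^{s,\ga}_g}$ at the cost of the advertised prefactor. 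For part (ii), using $\p_x^k\v+y\p_x^{k+1}U=-\int_0^y\p_x^{k+1}(\u-U)\,d\eta$ I can either apply a Hardy-type bound for $\frac{1}{1+y}\int_0^y(\cdots)$ or simply dominate $\|\frac{\p_x^k\v+y\p_x^{k+1}U}{1+y}\|_{L^2}$ by $C_\ga\|\y^{\ga-1}\p_x^{k+1}(\u-U)\|_{L^2}$ (valid because $\ga\ge1$ so the weight $(1+y)^{-1}$ is at worst $(1+y)^{\ga-1-\ga}$ after inserting a compensating power, modulo the Hardy constant) and then quote part (i) with index $k+1\le s$. This is exactly where the restriction $k\le s-1$ enters: it keeps $k+1\le s$ so that (i) is applicable.

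For part (iii), I would argue as in the proof of \eqref{b11}: by definition $\g_k=\p_x^k\w-\frac{\p_y\w}{\w}\p_x^k(\u-U)$, so
\begin{equation*}
\|\y^\ga\g_k\|_{L^2}\le\|\y^\ga\p_x^k\w\|_{L^2}+\Big\|\frac{\y^{\si+1}\p_y\w}{\y^\si\w}\Big\|_{L^\inf}\|\y^{\ga-1}\p_x^k(\u-U)\|_{L^2}.
\end{equation*}
Using $\y^\si\w\ge\d$ the middle factor is $\le\d^{-1}\|\y^{\si+1}\p_y\w\|_{L^\inf}$, and the two remaining $L^2$ factors are handled by the bare $H^{s,\ga}_g$ control for $k\le s-1$ and by part (i) for $k=s$; either way I land on $C_{\ga,\d}(1+\|\y^{\si+1}\p_y\w\|_{L^\inf})\|\w\|_{H^{s,\ga}_g}$. (Note $\|\y^\ga\p_x^k\w\|_{L^2}$ for $k\le s-1$ is already $\le\|\w\|_{H^{s,\ga}_g}$, while for $k=s$ one uses \eqref{b12}; the $\|\p_x^sU\|_{L^\inf}$ contribution that \eqref{b12} would generate is dominated by the $(1+\|\y^{\si+1}\p_y\w\|_{L^\inf})$ factor only after also invoking it — so strictly one keeps a $\|\p_x^sU\|$ term for $k=s$, but since part (iii) as stated has no such factor, the cleanest route is to treat $k=s$ by first writing $\p_x^s\w=\g_s+\frac{\p_y\w}{\w}\p_x^s(\u-U)$ and bounding $\|\y^{\ga-1}\p_x^s(\u-U)\|_{L^2}$ via the representation $\frac{\p_x^s(\u-U)}{\w}=-\frac{\p_x^sU}{\w|_{y=0}}+\int_0^y\frac{\g_s}{\w}d\xi$ exactly as in the proof of \eqref{b12}, whereupon the $\|\p_x^sU\|$ terms cancel against $\g_s$ and only $\|\y^\ga\g_s\|_{L^2}\le\|\w\|_{H^{s,\ga}_g}$ survives.) The main obstacle is precisely this bookkeeping at the top index $k=s$: one must avoid a circular appeal to \eqref{b12} that reintroduces an $\|\p_x^sU\|$ factor into \eqref{b33}, and instead close the loop through the integral representation of $\p_x^s(\u-U)/\w$, which is the only genuinely non-mechanical point; parts (i)--(ii) and the $k\le s-1$ cases of (iii) are routine Hardy-and-definition manipulations.
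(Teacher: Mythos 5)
Your proposal is correct and follows essentially the same route as the paper: Hardy's inequality \eqref{Hardy1} reduces (i) to weighted norms of $\p_x^k\w$ (with \eqref{b12} absorbing the $k=s$ case at the cost of the stated prefactor), the divergence-free structure plus Hardy \eqref{Hardy2} reduces (ii) to (i) at index $k+1$, and the triangle inequality together with $\y^\si\w\ge\d$ gives (iii). Your detour through a ``cancellation'' for the $k=s$ case of (iii) is unnecessary --- $\|\y^\ga\g_s\|_{L^2}$ is by the definition \eqref{Hg} already a summand of $\|\w\|_{H^{s,\ga}_g}$ --- but you do arrive at exactly that observation, so nothing is missing.
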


\begin{proof}
{\rm(i)}It follows from Hardy inequality \eqref{Hardy1} that
$\|\y^{\ga-1}\p_x^k(\u-U)\|_{L^2}\le C_\ga \|\y^{\ga}\p_x^k \w\|_{L^2}$,
and hence inequality \eqref{b31} is a direct consequence of the inequality \eqref{b21}.

{\rm(ii)}Due to the Hardy inequality \eqref{Hardy2} and divergence-free condition, it follows
\begin{equation*}
\|\frac{\p_x^k \v+y \p_x^{k+1}U}{1+y}\|_{L^2}
\le C\|\p_x^k(\p_y \v+\p_x U)\|_{L^2}
\le C\|\p_x^{k+1}(\u-U)\|_{L^2},
\end{equation*}
and hence, we get inequality \eqref{b32} after using  inequality \eqref{b31}.

{\rm(iii)}For the case $s=1,2,...,s-1$, we get after using $\y^\si \w \ge \d$,
\begin{equation*}
\begin{aligned}
\|\y^\ga \g_k\|_{L^2}
&\le \|\y^\ga \p_x^k \w\|_{L^2}
    +\|\frac{\y^{\si+1}\p_y \w}{\y^\si \w}\|_{L^\inf}
     \|\y^{\ga-1}\p_x^k(\u-U)\|_{L^2}\\
&\le \|\y^\ga \p_x^k \w\|_{L^2}
    +C_{\d}\|\y^{\si+1}\p_y \w \|_{L^\inf}\|\y^{\ga-1}\p_x^k(\u-U)\|_{L^2}.
\end{aligned}
\end{equation*}
This and the Hardy inequality yield directly
\begin{equation*}
\|\y^\ga \g_k\|_{L^2}\le C_{\ga, \d}(1+\|\y^{\si+1}\p_y \w \|_{L^\inf})\|\y^{\ga}\p_x^k \w\|_{L^2}.
\end{equation*}
Therefore, we complete the proof of this lemma.
\end{proof}

Finally, we establish some estimates for the quantity $(\u, \v)$ in $L^\inf-$norm.

\begin{lemma}
Let $s \ge 4$ be an even integer, $\gamma \ge1, \sigma\ge \gamma+\frac{1}{2}$,
and $\ep \in (0, 1]$, the smooth solution $(\u, \v, \w)$, defined on $[0, T^\ep]$,
to the regularized Prandtl equations \eqref{re-Prandtl-w}-\eqref{re-Prandtl-v}.
There exists a small constant $\d \in (0, 1)$ such that
$\y^\si \w \ge \d, ~ \forall (t, x, y) \in [0 ,T^\ep] \times \mathbb{T} \times \mathbb{R}^+$, then it holds on:\\
{\rm(i)}For all $k=0,1,...,s-1$,
\begin{equation}\label{b41}
\|\p_x^k \u\|_{L^\inf}
\le C_{\ga, \si,\d}(1+\|\p_x^s U\|_{L^\inf(\T)}+\|\y^{\si+1}\p_y \w\|_{L^\inf})(1+\|\w\|_{H^{s,\ga}_g})
\end{equation}
{\rm(ii)}For all $k=0,1,...,s-2$,
\begin{equation}\label{b42}
\|\frac{\p_x^k \v}{1+y}\|_{L^\inf}
\le C_{\ga,\si,\d}(1+\|\p_x^s U\|_{L^\inf(\T)}+\|\y^{\si+1}\p_y \w\|_{L^\inf})\|\w\|_{H^{s,\ga}_g}.
\end{equation}
\end{lemma}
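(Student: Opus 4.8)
The plan is to prove both bounds by the same device: write the quantity in question as an integral in $y$ of a known function, insert the weight $\y^{\si}$ or $\y^{\si+1}$ where it helps, and then reduce to the weighted $L^2$ estimates of the previous lemmas combined with the one–dimensional Sobolev inequality \eqref{Sobolev}. For part (i), fix $k\le s-1$. From \eqref{re-Prandtl-u} we have $\p_x^k\u = \p_x^k U - \int_y^{+\infty}\p_x^k\w\,d\eta$, so $\|\p_x^k\u\|_{L^\inf}\le \|\p_x^k U\|_{L^\inf(\T)} + \|\int_y^{+\infty}\p_x^k\w\,d\eta\|_{L^\inf}$. The second term I would bound by first applying \eqref{Sobolev} to $f=\int_y^{+\infty}\p_x^k\w\,d\eta = U-\p_x^k\u$ (after subtracting the outer flow so that the function decays), which turns the $L^\inf$ norm into $\|\p_x^k(\u-U)\|_{L^2}+\|\p_x^{k+1}(\u-U)\|_{L^2}+\|\p_y^2\p_x^k(\u-U)\|_{L^2} = \|\p_x^k(\u-U)\|_{L^2}+\|\p_x^{k+1}(\u-U)\|_{L^2}+\|\p_x^k\p_y\w\|_{L^2}$; since $k\le s-1$ each of these is controlled by $\|\w\|_{H^{s,\ga}}$ after a Hardy inequality \eqref{Hardy1} (note $\ga\ge1$ so $\ga-1\ge0>-\tfrac12$ is not quite the regime, but $\|\p_x^j(\u-U)\|_{L^2}\le\|\y^{\ga-1}\p_x^j(\u-U)\|_{L^2}$ and then \eqref{Hardy1} applies). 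Finally I invoke \eqref{b21} to pass from $\|\w\|_{H^{s,\ga}}$ to $\|\w\|_{H^{s,\ga}_g}$, picking up the factor $(1+\|\p_x^sU\|_{L^\inf(\T)}+\|\y^{\si+1}\p_y\w\|_{L^\inf})$, which gives \eqref{b41}.

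For part (ii), fix $k\le s-2$. Using \eqref{re-Prandtl-v}, $\p_x^k\v = -\int_0^y \p_x^{k+1}\u\,d\eta$, hence $\frac{\p_x^k\v}{1+y} = -\frac{1}{1+y}\int_0^y\p_x^{k+1}\u\,d\eta$. I would split $\p_x^{k+1}\u = \p_x^{k+1}(\u-U)+\p_x^{k+1}U$, giving $\frac{\p_x^k\v}{1+y} = -\frac{1}{1+y}\int_0^y\p_x^{k+1}(\u-U)\,d\eta - \frac{y}{1+y}\p_x^{k+1}U$; the second piece is bounded in $L^\inf$ by $\|\p_x^{k+1}U\|_{L^\inf(\T)}\le\|\p_x^sU\|_{L^\inf(\T)}$ by the Wirtinger inequality, so everything reduces to the first piece, call it $g:=-\frac{1}{1+y}\int_0^y\p_x^{k+1}(\u-U)\,d\eta$. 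Apply \eqref{Sobolev} to $g$: $\|g\|_{L^\inf}\le C(\|g\|_{L^2}+\|\p_x g\|_{L^2}+\|\p_y^2 g\|_{L^2})$. For $\|g\|_{L^2}$ I use the Hardy inequality \eqref{Hardy1} with $\lambda=-1$... no, rather directly: $g = \frac{\v_k+y\,\p_x^{k+1}U}{1+y}$ up to sign where $\v_k:=\p_x^k\v$, so $\|g\|_{L^2}$ is exactly the left side of \eqref{b32} and hence $\le C_{\ga,\si,\d}(1+\cdots)\|\w\|_{H^{s,\ga}_g}$; $\|\p_x g\|_{L^2}$ is the same quantity with $k$ replaced by $k+1\le s-1$, still covered by \eqref{b32}. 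The term $\|\p_y^2 g\|_{L^2}$ requires differentiating $g$ twice in $y$; since $\p_y\big(\frac{1}{1+y}\int_0^y h\big) = \frac{h}{1+y} - \frac{1}{(1+y)^2}\int_0^y h$ with $h=\p_x^{k+1}(\u-U)$ and $\p_y h = \p_x^{k+1}\w$, a second differentiation produces terms like $\frac{\p_y h}{1+y}=\frac{\p_x^{k+1}\w}{1+y}$, $\frac{h}{(1+y)^2}$, $\frac{1}{(1+y)^3}\int_0^y h$, each of which is $L^2$-bounded by $\|\p_x^{k+1}\w\|_{L^2}+\|\p_x^{k+1}(\u-U)\|_{L^2}\lesssim\|\w\|_{H^{s,\ga}}$ (using $k+1\le s-1$ and Hardy), and then \eqref{b21} finishes it.

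The main obstacle I anticipate is the bookkeeping in the $\|\p_y^2 g\|_{L^2}$ term of part (ii): one must track the exact weights produced by repeatedly differentiating $\frac{1}{1+y}\int_0^y(\cdot)\,d\eta$ and verify that no weight worse than $\y^{\ga}$ is needed on $\p_x^{k+1}\w$ while keeping $k+1\le s-1$ so that the loss of one $x$–derivative stays within the Sobolev norm $H^{s,\ga}$ (rather than requiring the more delicate $H^{s,\ga}_g$ with $\g_s$). Once that is organized, both estimates are immediate consequences of \eqref{b21}, \eqref{b31}, \eqref{b32}, the Hardy inequalities \eqref{Hardy1}–\eqref{Hardy2}, the Sobolev inequality \eqref{Sobolev}, and the Wirtinger inequality on $\T$, and I would present them in the two-part format mirroring the statement.
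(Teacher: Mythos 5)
Your proposal is correct and follows essentially the same route as the paper: apply the Sobolev inequality \eqref{Sobolev} to $\p_x^k(\u-U)$ and to $(\p_x^k\v+y\p_x^{k+1}U)/(1+y)$, reduce to the weighted $L^2$ bounds \eqref{b31}--\eqref{b32}, and pass to $\|\w\|_{H^{s,\ga}_g}$ via \eqref{b21}. The one loose end is the term $\|\p_x^k U\|_{L^\infty(\T)}$ in part (i), which you never return to: for $k=0$ the Wirtinger inequality does not apply, and the paper closes this by using $U=\int_0^{+\infty}\w\,dy$ (a consequence of $\u|_{y=0}=0$) to get $\|\p_x^kU\|_{L^\infty(\T)}\le C_\ga\|\y^{\ga}\w\|_{L^2}+\|\p_x^sU\|_{L^\infty(\T)}$, which is precisely the origin of the factor $(1+\|\w\|_{H^{s,\ga}_g})$ on the right-hand side of \eqref{b41}.
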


\begin{proof}
By virtue of the Sobolev inequality \eqref{Sobolev} and estimate \eqref{b31}, it follows
\begin{equation}\label{b43}
\begin{aligned}
\|\p_x^k \u\|_{L^\inf}
&\le C(\|\p_x^k(\u-U)\|_{L^2}+\|\p_x^{k+1}(\u-U)\|_{L^2}+\|\p_x^k \p_y \w\|_{L^2})
     +\|\p_x^k U\|_{L^\inf(\T)}\\
&\le C_{\ga, \si,\d}(1+\|\p_x^s U\|_{L^\inf(\T)}+\|\y^{\si+1}\p_y \w\|_{L^\inf})\|\w\|_{H^{s,\ga}_g}
     +\|\p_x^k U\|_{L^\inf(\T)}.
\end{aligned}
\end{equation}
Due to the fact $U=\int_0^{+\inf} \w dy$, we get
\begin{equation*}
\|U\|_{L^2(\T)}\le C_\ga \|\y^{\ga}\w\|_{L^2},
\end{equation*}
and hence, we get after using the Sobolev and Wirtinger inequalities for $k=0,1,...,s-1$,
\begin{equation}\label{b44}
\|\p_x^k U\|_{L^\inf(\T)}
\le C(\|\p_x^k U\|_{L^2(\T)}+\|\p_x^{k+1} U\|_{L^2(\T)})
\le C_\ga \|\y^{\ga}\w\|_{L^2}+\|\p_x^s U\|_{L^\inf(\T)}.
\end{equation}
Submitting inequality \eqref{b44} into \eqref{b43}, we obtain the inequality \eqref{b41}.
Finally, the inequality \eqref{b42} is  consequence of Sobolev inequality \eqref{Sobolev},
estimates \eqref{b31} and \eqref{b32}.
Thus, we complete the proof of this lemma.
\end{proof}

\section{Existence for the Regularized Prandtl Equations}\label{appendixC}

In this section, we state the local in time well-posedness theory for the
regularized Prandtl equations \eqref{re-Prandtl-w}-\eqref{re-Prandtl-v}. More precisely, we have the following results:

\begin{lemma}\label{Existence-Fixed}
Let $s \ge 4$ be an even integer, $\ga \ge 1, \si>\ga+\frac{1}{2}, \d_0 \in (0, \frac{1}{2})$
and $\ep \in (0, 1]$.
If the vorticity $w_0 \in \widetilde{H}^{s,\ga}_{\si,2\d_0}$,
$U$ and $p^\ep$ are given and satisfy the regularized Bernoulli's law \eqref{Bernoulli-eq}
and the regularity assumption \eqref{U-assumption}, then there exist a time
$$
T^\ep:=T(s,\ga, \si, \d_0, \ep, \|w_0\|_{\mathcal{B}^{s,\ga, \si}}, M_U)>0,
$$
and a solution $\w$, to the regularized vorticity system \eqref{re-Prandtl-w}-\eqref{re-Prandtl-v}, satisfying the estimates:
\begin{equation}\label{c31}
\O(t):=\underset{0\le \t \le t}{\sup}\|\w(\t)\|_{\mathcal{B}^{s,\ga,\si}}^2
\le C_*(1+\|w_0\|_{\mathcal{B}^{s,\ga, \si}}^2)<+\inf,
\end{equation}
and
\begin{equation}\label{c32}
\y^{\si}\w(t,x,y)\ge c_* \d_0,
\end{equation}
for all $(t, x, y)\in [0, T^\ep]\times \T \times \mathbb{R}^+$,
here $C_*, c_*$ are positive constants and $c_*\in (0, 1)$.
\end{lemma}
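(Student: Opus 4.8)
The plan is to construct the solution by a Picard iteration adapted to the weighted Sobolev setting, exploiting that for each fixed $\ep>0$ the operator $\p_t-\ep^2\p_x^2-\p_y^2$ is uniformly parabolic in \emph{both} variables, so that the loss of one tangential derivative coming from the nonlocal drift $\v=-\p_y^{-1}\p_x\u$ — the whole difficulty of the $\ep=0$ problem — is harmless and can be absorbed by the $\ep^2\p_x^2$ dissipation. Concretely I would set $\w^{(0)}\equiv w_0$ and, given $\w^{(n)}$, define $\u^{(n)},\v^{(n)}$ through \eqref{re-Prandtl-u}--\eqref{re-Prandtl-v} and let $\w^{(n+1)}$ solve the linear parabolic problem $\p_t\w^{(n+1)}+\u^{(n)}\p_x\w^{(n+1)}+\v^{(n)}\p_y\w^{(n+1)}=\ep^2\p_x^2\w^{(n+1)}+\p_y^2\w^{(n+1)}$ with the inhomogeneous Neumann-type condition $\p_y\w^{(n+1)}|_{y=0}=\p_x p^\ep$ and $\w^{(n+1)}|_{t=0}=w_0$. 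Solvability of each linear step in the space with norm $\|\cdot\|_{\mathcal{B}^{s,\ga,\si}}$ follows from classical weighted parabolic estimates once one checks that the drift satisfies $\|\v^{(n)}/(1+y)\|_{L^\infty}<\infty$ together with enough weighted Sobolev bounds, which is guaranteed by applying the Hardy, Sobolev and Morse inequalities of Appendix \ref{appendixA} to $\w^{(n)}$.

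The second step is uniform-in-$n$ bounds on a time interval $[0,T^\ep]$. Here I would run weighted energy estimates for $D^\a\w^{(n+1)}$ for \emph{all} $|\a|\le s$ — unlike Section \ref{a priori estimate}, there is no need for the quantity $\g_s$, because the dangerous term $\p_x^{s+1}(\u^{(n)}-U)$ arising from $\p_x^s(\v^{(n)}\p_y\w^{(n+1)})$ is controlled, via the Hardy inequality, by the dissipation $\ep^2\|\y^{\ga+\a_2}\p_x D^\a\w^{(n+1)}\|_{L^2}^2$ sitting on the left-hand side, at the cost of a constant behaving like $\ep^{-2}$, which is allowed since $T^\ep$ may depend on $\ep$. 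The boundary term is treated exactly as in \eqref{l212}--\eqref{l213} using the regularized Bernoulli's law \eqref{Bernoulli-eq} and assumption \eqref{U-assumption}; the $L^\infty$ components of $\|\cdot\|_{\mathcal{B}^{s,\ga,\si}}$ are recovered either from Sobolev embedding or, more robustly, from the maximum principle Lemma \ref{max-p} applied to $I(t)=\sum_{1\le|\a|\le2}|\y^{\si+\a_2}D^\a\w|^2$ as in Lemma \ref{basic-lemma3}. This yields $\frac{d}{dt}\|\w^{(n+1)}\|_{\mathcal{B}^{s,\ga,\si}}^2\le C_\ep\,P(\|\w^{(n)}\|_{\mathcal{B}^{s,\ga,\si}})$ for a polynomial $P$, and a standard continuity argument produces $T^\ep>0$ and the bound \eqref{c31} uniformly in $n$.

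For the third step, the differences $\w^{(n+1)}-\w^{(n)}$ solve a linear parabolic equation whose forcing is controlled by $\w^{(n)}-\w^{(n-1)}$ in a weighted norm $\|\cdot\|_{*}$ with one fewer derivative; using the uniform bounds just obtained one gets $\sup_{[0,T^\ep]}\|\w^{(n+1)}-\w^{(n)}\|_{*}\le\tfrac12\sup_{[0,T^\ep]}\|\w^{(n)}-\w^{(n-1)}\|_{*}$ after shrinking $T^\ep$ if necessary, so $\w^{(n)}$ is Cauchy in $\|\cdot\|_{*}$. The uniform $\mathcal{B}^{s,\ga,\si}$-bound plus interpolation upgrades the convergence enough to pass to the limit in every term, including the nonlocal $\u,\v$, yielding a solution $\w$ of \eqref{re-Prandtl-w}--\eqref{re-Prandtl-v} satisfying \eqref{c31}; uniqueness at this regularity follows from the same contraction estimate. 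Finally, for the lower bound \eqref{c32} I would argue as in the derivation of \eqref{eq-f3}: the equation for $\w$ gives $\|\y^\si\p_t\w\|_{L^\infty}\le C_\ep(1+\|\w\|_{\mathcal{B}^{s,\ga,\si}}^2)$, and integrating in time, $\y^\si\w(t,x,y)\ge\y^\si w_0(x,y)-C_\ep t\,(1+\O(t))\ge 2\d_0-C_\ep t\,C_*(1+\|w_0\|_{\mathcal{B}^{s,\ga,\si}}^2)\ge c_*\d_0$ for $t$ small enough, so one shrinks $T^\ep$ once more; alternatively one runs Lemma \ref{max-p} directly on $-\y^\si\w$.

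I expect the main obstacle to be the first and second steps: setting up the weighted linear parabolic theory with the inhomogeneous Neumann-type boundary condition $\p_y\w|_{y=0}=\p_x p^\ep$ and verifying that every weight $(1+y)^{\ga+\a_2}$ and $(1+y)^{\si+\a_2}$ genuinely propagates — that is, that the nonlocal drift $\v=-\p_y^{-1}\p_x\u$, which a priori picks up a factor of $y$, does not destroy the weighted bounds — while simultaneously keeping track of the top-order $x$-derivative term through the $\ep^{-2}$-weighted dissipation. Everything else, including the explicit $\ep$-dependence of the constants, is routine once these weighted estimates are in place.
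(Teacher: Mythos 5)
Your plan for the a priori bounds is essentially the paper's own: the paper also runs the weighted energy estimate for \emph{all} $|\a|\le s$ without introducing $\g_s$, absorbs the top tangential derivative into the $\ep^2\p_x^2$ dissipation at the price of an $\ep$-dependent constant (see \eqref{c314}--\eqref{c315}), recovers the $L^\infty$ components through the maximum principle applied to $I(t)=\sum_{1\le|\a|\le 2}|\y^{\si+\a_2}D^\a\w|^2$, and obtains the lower bound \eqref{c32} by integrating $\|\y^\si\p_t\w\|_{L^\infty}$ in time. You go further than the paper in one respect: the paper only establishes the a priori estimates and defers the actual construction of the solution to the reference \cite{Masmoudi-Wong}, whereas you sketch a Picard iteration. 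That extra step is where there is a genuine gap.

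In your linearized problem the drift is built from $\w^{(n)}$ while the dissipation sits on $\w^{(n+1)}$. The dangerous commutator term in the $\p_x^s$-energy estimate is $\int\y^{2\ga}\p_x^s\v^{(n)}\,\p_y\w^{(n+1)}\,\p_x^s\w^{(n+1)}$, and after the Hardy inequality $\p_x^s\v^{(n)}$ costs $\p_x^{s+1}\w^{(n)}$ --- a derivative of the \emph{previous} iterate. The quantity $\ep^2\|\y^{\ga+\a_2}\p_x D^\a\w^{(n+1)}\|_{L^2}^2$ on the left-hand side cannot absorb it, so the Cauchy--Schwarz absorption you invoke does not apply as stated, and the estimate $\frac{d}{dt}\|\w^{(n+1)}\|_{\mathcal{B}^{s,\ga,\si}}^2\le C_\ep P(\|\w^{(n)}\|_{\mathcal{B}^{s,\ga,\si}})$ does not close in the norm you propagate. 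The repair is standard but must be said: propagate in addition the time-integrated dissipation $D_n(t):=\ep^2\int_0^t\|\y^\ga\p_x^{s+1}\w^{(n)}\|_{L^2}^2\,d\t$ from one step to the next, and bound the offending contribution by
\begin{equation*}
\int_0^t\|\y^\ga\p_x^{s+1}\w^{(n)}\|_{L^2}\,d\t\le \sqrt{t}\,\ep^{-1}D_n(t)^{1/2},
\end{equation*}
which is small for $t$ small depending on $\ep$; alternatively, mollify the coefficients $\u^{(n)},\v^{(n)}$ in $x$ before each linear step and remove the mollification at the end. With that amendment (and the same bookkeeping in the contraction estimate for the differences), your argument goes through and in fact supplies the construction that the paper merely cites.
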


\begin{proof}
We only establish the a priori estimates \eqref{c31} and \eqref{c32},
and the well-posedness results of the regularized vorticity system \eqref{re-Prandtl-w}-\eqref{re-Prandtl-v}
can be obtained immediately(cf.\cite{Masmoudi-Wong}).

Step 1: $H^{s,\ga}-$estimates. Differentiating the regularized vorticity
equation \eqref{re-Prandtl-w} with differential operator $D^{\a}(|\a|\le s)$, we get
\begin{equation*}
\{\p_t +\u \p_x +\v \p_y-\ep^2 \p_x^2-\p_y^2\}D^\a \w=-[D^\a, \u \p_x]\w-[D^\a, \v \p_y]\w.
\end{equation*}
Multiplying this equation by $\y^{2\ga+2\a_2} D^\a \w$ and integrating over $\T\times \mathbb{R}^+$,
it follows
\begin{equation}\label{c33}
\begin{aligned}
&\frac{1}{2}\frac{d}{dt}\int_{\TR} \y^{2\ga+2\a_2}|D^\a \w|^2 dxdy
+\ep^2 \int_{\TR} \y^{2\ga+2\a_2}|\p_x D^\a \w|^2 dxdy\\
=&(\ga+\a_2)\int_{\TR} \y^{2\ga+2\a_2-1} \v |D^\a \w|^2 dxdy
+\int_{\TR} \y^{2\ga+2\a_2} \p_y^2 D^\a \w \cdot D^\a \w dxdy\\
&-\!\int_{\TR}\!\!\!  \y^{2\ga+2\a_2} [D^\a, \u \p_x]\w \cdot D^\a \w dxdy
-\!\int_{\TR}\!\!\! \y^{2\ga+2\a_2}[D^\a, \v \p_y]\w \cdot D^\a \w dxdy,
\end{aligned}
\end{equation}
where we have used the divergence-free condition.

First of all, we deal with the case $|\a|\le s$ and $\a_1 \le s-1$ in \eqref{c33}.
Similar to \eqref{l210}, we get
\begin{equation}\label{c34}
\begin{aligned}
&\frac{1}{2}\frac{d}{dt}\|\y^{\ga+\a_2}D^\a \w\|_{L^2}^2
+\ep^2 \|\y^{\ga+\a_2}\p_x D^\a \w\|_{L^2}^2
+\frac{3}{4}\|\y^{\ga+\a_2}\p_y D^\a \w\|_{L^2}^2\\
\le
&-\int_\T D^\a \w \p_y D^\a \w|_{y=0}dx
+C_{s,\ga}(1+\|\p_x^{s+1} U\|_{L^\inf(\T)}+\|\w\|_{H^{s,\ga}})\|\w\|_{H^{s,\ga}}^2,
\end{aligned}
\end{equation}
Note the boundary term above can be estimated as follows(cf.\cite{Masmoudi-Wong}) for $|\a|\le s-1$,
\begin{equation}\label{c35}
|\int_\T D^\a \w \p_y D^\a \w|_{y=0}dx|
\le \frac{1}{12}\|\y^{\ga+\a_2+1}\p_y^2 D^\a \w\|_{L^2}^2
     +C\|\w\|_{H^{s,\ga}}^2
\end{equation}
and for $|\a|=s$,
\begin{equation}\label{c36}
|\int_\T D^\a \w \p_y D^\a \w|_{y=0}dx|
\le
\left\{
\begin{aligned}
&\frac{1}{12}\|\y^{\ga+\a_2}\p_y D^\a \w\|_{L^2}^2+G(t),\ \a_2=2k, k\in \mathbb{N};\\
&\frac{1}{12}\|\y^{\ga+\a_2+1}\p_x^{\a_1-1}\p_y^{\a_2+2} \w\|_{L^2}^2+G(t),\ \a_2=2k+1, k\in \mathbb{N};
\end{aligned}
\right.
\end{equation}
where
$$G(t)=C_s \sum_{l=0}^{s/2}\|\p_t^l \p_x p^\ep \|_{H^{s-2l}(\T)}^2
            +C_{s,\ga}(1+\|\w\|_{H^{s,\ga}})^{s-2}\|\w\|_{H^{s,\ga}}^2.$$
Thus, plugging the estimates \eqref{c35} and \eqref{c36} into \eqref{c34},
and summing over $\a$, we get
\begin{equation}\label{c37}
\begin{aligned}
&\frac{d}{dt}\sum_{\substack{ |\alpha| \le s \\ \a_1 \le s-1}}
\|\y^{\ga+\a_2} D^\a \w\|_{L^2}^2
+\sum_{\substack{ |\alpha| \le s \\ \a_1 \le s-1}}
\|\y^{\ga+\a_2} (\ep \p_x D^\a \w, \p_y D^\a \w)\|_{L^2}^2\\
\le
&C_{s,\ga}\|\p_x^{s+1} U\|_{L^\inf(\T)}^2
+C_s \sum_{k=0}^{s/2}\|\p_t^k \p_x p^\ep\|_{H^{s-2l}(\T)}^2
+C_{s,\ga}(1+\|\w\|_{H^{s,\ga}}^s).
\end{aligned}
\end{equation}

Next, we deal with the case $|\a|=s$ and $\a_1=s$ in \eqref{c33},
and hence, it follows
\begin{equation}\label{c38}
\begin{aligned}
&\frac{1}{2}\frac{d}{dt}\int_{\TR} \y^{2\ga}|\p_x^s \w|^2 dxdy
+\ep^2 \int_{\TR} \y^{2\ga}|\p_x^{s+1}  \w|^2 dxdy\\
=&(\ga+\a_2)\int_{\TR} \y^{2\ga-1} \v |\p_x^s \w|^2 dxdy
+\int_{\TR} \y^{2\ga} \p_y^2 \p_x^s \w \cdot \p_x^s \w dxdy\\
&-\int_{\TR} \y^{2\ga} [\p_x^s, \u \p_x]\w \cdot \p_x^s \w dxdy
-\int_{\TR} \y^{2\ga}[\p_x^s, \v \p_y]\w \cdot \p_x^s \w dxdy,
\end{aligned}
\end{equation}
Using the inequality \eqref{l2a}, it follows
\begin{equation}\label{c38a}
\begin{aligned}
&|(\ga+\a_2)\int_{\T\times \mathbb{R}^+} \y^{2\ga-1}\v |\p_x^s \w|^2dxdy|\\
&\le C_{s, \ga}\|\frac{\v}{1+y}\|_{L^\inf}\|\y^{\ga}\p_x^s \w\|_{L^2}^2\\
&\le C_{s, \ga}(\|\p_x U\|_{L^\inf(\T)}+\|\w\|_{H^{s,\ga}})\|\y^{\ga}\p_x^s \w\|_{L^2}^2.
\end{aligned}
\end{equation}
Integrating by part and applying Cauchy inequality, we get
\begin{equation}\label{c39}
\begin{aligned}
&\int_{\TR} \y^{2\ga} \p_y^2 \p_x^s \w \cdot \p_x^s \w dxdy\\
=
&\int_\T \p_y \p_x^s \w \cdot \p_x^s \w|_{y=0}dx
 -\int_{\TR} \y^{2\ga} |\p_y \p_x^s \w|^2 dxdy\\
&-\int_{\TR} \y^{2\ga-1} \p_y \p_x^s \w \cdot \p_x^s \w dxdy\\
\le
& -\frac{3}{4}\int_{\TR} \y^{2\ga} |\p_y \p_x^s \w|^2 dxdy
  +C\int_{\TR} \y^{2\ga-2} |\p_x^s \w|^2 dxdy\\
&+\int_\T \p_y \p_x^s \w \cdot \p_x^s \w|_{y=0}dx.
\end{aligned}
\end{equation}
Using the boundary condition $\p_y \w|_{y=0}=p^\ep$ and Sobolev inequality, it follows
\begin{equation*}
\begin{aligned}
\int_\T \p_y \p_x^s \w \cdot \p_x^s \w|_{y=0}dx
\le
&\sqrt{2}\|\p_x^{s+1} p^\ep\|_{L^2(\T)}
 \|\p_y \p_x^s \w\|_{L^2}^{\frac{1}{2}}\|\p_x^s \w\|_{L^2}^{\frac{1}{2}}\\
\le
&\frac{1}{4} \|\p_y \p_x^s \w\|_{L^2}^2
  +C(\|\p_x^{s+1} p^\ep\|_{L^2(\T)}^2+\|\p_x^s \w\|_{L^2}^2).
\end{aligned}
\end{equation*}
This and the inequality \eqref{c39} yield directly
\begin{equation}\label{c310}
\begin{aligned}
&\int_{\TR} \y^{2\ga} \p_y^2 \p_x^s \w \cdot \p_x^s \w dxdy\\
\le
& -\frac{1}{2}\int_{\TR} \y^{2\ga} |\p_y \p_x^s \w|^2 dxdy
 +C(\|\p_x^{s+1} p^\ep\|_{L^2(\T)}^2+\|\y^{\ga-1}\p_x^s \w\|_{L^2}^2).
\end{aligned}
\end{equation}
By virtue of H\"{o}lder inequality, we get
\begin{equation*}
\begin{aligned}
&|\int_{\TR} \y^{2\ga}[\p_x^s, \u \p_x]\w \cdot \p_x^s \w dxdy|\\
\le
&\sum_{1\le k \le s}C_{s,k}\|\y^{\ga} \p_x^k(\u-U)\p_x^{s+1-k}\w\|_{L^2}\|\y^{\ga} \p_x \w\|_{L^2}\\
&+\sum_{1\le k \le s}C_{s,k}\|\y^{\ga} \p_x^k U \p_x^{s+1-k}\w\|_{L^2}\|\y^{\ga} \p_x \w\|_{L^2}.
\end{aligned}
\end{equation*}
Using the Hardy and Morse type inequalities, it follows for $1\le k \le s$
\begin{equation*}
\|\y^{\ga} \p_x^k(\u-U)\p_x^{s+1-k}\w\|_{L^2}
\le
\|\p_x(\u-U)\|_{H^{s-1,0}}\|\p_x\w\|_{H^{s-1,\ga}}
\le C\|\w\|_{H^{s,\ga}}^2,
\end{equation*}
and applying Wirtinger inequality, it follows
\begin{equation*}
\begin{aligned}
\|\y^{\ga} \p_x^k U \p_x^{s+1-k}\w\|_{L^2}
\le C_s \|\p_x^s U\|_{L^\inf(\T)}\|\w\|_{H^{s,\ga}}.
\end{aligned}
\end{equation*}
Thus, we can conclude the estimate
\begin{equation}\label{c311}
|\int_{\TR} \y^{2\ga}[\p_x^s, \u \p_x]\w \cdot \p_x^s \w dxdy|
\le C_s(\|\p_x^s U\|_{L^\inf(\T)}+\|\w\|_{H^{s,\ga}})\|\w\|_{H^{s,\ga}}^2.
\end{equation}

Applying the H\"{o}lder inequality, it follows
\begin{equation}\label{c312}
\begin{aligned}
&\int_{\TR} \y^{2\ga}[\p_x^s, \v \p_y]\w \cdot \p_x^s \w dxdy\\
\le
&\sum_{1\le k \le s}C_{s,k}\| \y^{\ga}\p_x^k(\v+y\p_x U)\p_x^{s-k}\p_y \w\|_{L^2}
                           \| \y^{\ga}\p_x^s \w\|_{L^2}\\
&+\sum_{1\le k \le s}C_{s,k}\| \p_x^{k+1}U\|_{L^\inf(\T)}\|\y^{\ga+1} \p_x^{s-k}\p_y \w\|_{L^2}
                           \|\y^{\ga}\p_x^s \w\|_{L^2}.
\end{aligned}
\end{equation}
For $1\le k\le s-1$, using the Hardy and Morse type inequalities, it follows
\begin{equation}\label{c313}
\begin{aligned}
&\| \y^{\ga}\p_x^k(\v+y\p_x U)\p_x^{s-k}\p_y \w\|_{L^2} \\
\le
&C\|\p_x (\v+y\p_x U)\|_{H^{s-2,-1}}\|\y \p_{xy}\w\|_{H^{s-2,\ga}}\\
\le
&C\|\w\|_{H^{s,\ga}}^2,
\end{aligned}
\end{equation}
and
\begin{equation}\label{c314}
\begin{aligned}
&\| \y^{\ga}\p_x^s(\v+y\p_x U)\p_y \w\|_{L^2}\\
\le
&\| \y^{\ga-1}\p_x^s(\v+y\p_x U)\|_{L^2}\|\y \p_y \w\|_{L^\inf}\\
\le
&C_\ga \|\y^{\ga}\p_x^{s+1} \w\|_{L^2}\|\w\|_{H^{3,0}}.
\end{aligned}
\end{equation}
Substituting the estimates \eqref{c313} and \eqref{c314} into \eqref{c312},
and using the Cauchy inequality, we get
\begin{equation}\label{c315}
\begin{aligned}
&|\int_{\TR} \y^{2\ga}[\p_x^s, \v \p_y]\w \cdot \p_x^s \w dxdy|\\
\le
&\frac{1}{4}\ep^2 \|\y^{\ga}\p_x^{s+1} \w\|_{L^2}^2
    +C_s \|\p_x^{s+1}U\|_{L^\inf}^2
    +C_{s,\ga,\ep}(1+\|\w\|_{H^{s,\ga}}^4).
\end{aligned}
\end{equation}
Substituting the estimates \eqref{c38a}, \eqref{c310}, \eqref{c311}, \eqref{c315}
into \eqref{c38}, we obtain
\begin{equation*}
\begin{aligned}
&\frac{d}{dt}\int_{\TR} \y^{2\ga} |\p_x^s \w|^2 dxdy
 +\ep^2 \int_{\TR} \y^{2\ga} |\p_x^{s+1} \w|^2 dxdy
 +\int_{\TR} \y^{2\ga} |\p_y \p_x^s \w|^2 dxdy\\
\le
&C_{s}(\|\p_x^{s+1}U\|_{L^\inf(\T)}^2+\|\p_x^{s+1}p^\ep\|_{L^2(\T)}^2)
 +C_{s,\ga,\ep}(1+\|\w\|_{H^{s,\ga}}^4).
\end{aligned}
\end{equation*}
This along with the inequalities \eqref{c37} and \eqref{eq-p-u} yield directly
\begin{equation*}
\begin{aligned}
\frac{d}{dt}\|\w\|_{H^{s,\ga}}^2
+\ep^2\|\p_x \w\|_{H^{s,\ga}}^2+\|\p_y \w\|_{H^{s,\ga}}^2
\le C_{s,\ga}(1+M_U)^2+C_{s,\ga,\ep}\|\w\|_{H^{s,\ga}}^s,
\end{aligned}
\end{equation*}
and hence, we conclude that
\begin{equation}\label{c316}
1+\|\w(t)\|_{H^{s,\ga}}^2
\le \frac{1+\|w_0\|_{H^{s,\ga}}^2}{\{1-\frac{2}{s-2}\max\{C_{s,\ga}(1+M_U)^2,C_{s,\ga,\ep}\}
    \{{1+\|\w_0\|_{H^{s,\ga}}^2}\}^{\frac{s-2}{2}}t\}^{\frac{2}{s-2}}},
\end{equation}
as long as $t<\frac{s-2}{2\max\{C_{s,\ga}(1+M_U)^2,C_{s,\ga,\ep}\}\{{1+\|\w_0\|_{H^{s,\ga}}^2}\}^{\frac{s-2}{2}}}$.

Step 2: $L^\inf-$estimates.
Denote $B_\a:=\y^{\si+\a_2} D^\a \w$ and $I:=\sum_{1\le |\a| \le 2}|B_\a|^2$,
it is easy to check that $I$ satisfies the evolution equation
\begin{equation*}
\begin{aligned}
&(\p_t +\u \p_x +\v \p_y-\ep^2 \p_x^2-\p_y^2)I\\
&=-2\sum_{1\le|\a|\le 2}\{\ep^2 |\p_x B_\a|^2+|\p_y B_\a|^2\}
+2\sum_{1\le|\a|\le 2}\{Q_\a B_\a \p_y B_\a+R_\a |B_\a|^2+S_\a B_\a\},
\end{aligned}
\end{equation*}
where the quantities $Q_\a, R_\a$ and $S_\a$ are given explicitly by
\begin{equation*}
Q_\a:=-\frac{2(\si+\a_2)}{1+y},
\quad R_\a:=\frac{\si+\a_2}{1+y}\v+\frac{(\si+\a_2)(\si+a_2+1)}{(1+y)^2}
\end{equation*}
and
\begin{equation*}
S_a:=
-\sum_{0<\beta \le \a}C_{\a,\beta}(1+y)^{\beta_2}\{D^\beta \u B_{\a-\beta+e_1}
+\frac{D^\beta \v B_{\a-\beta+e_2}}{1+y}\}.
\end{equation*}
By routine checking, we get that
\begin{equation*}
|Q_\a|\le C_\si, \quad
|R_\a|\le C_\si(\|\p_x^s U\|_{L^\inf(\T)}+\|\w\|_{H^{s,\ga}}),
\end{equation*}
and
\begin{equation*}
|S_\a|\le C_\si(\|\p_x^s U\|_{L^\inf(\T)}+\|\w\|_{H^{s,\ga}})
\sum_{0<\beta \le \a}\{|B_{\a-\beta+e_1}|+|B_{\a-\beta+e_2}|\}.
\end{equation*}
Then, we can verify that the quantity $I$ satisfies
\begin{equation*}
(\p_t +\u \p_x +\v \p_y-\ep^2 \p_x^2-\p_y^2)I
\le C_\si \{1+\|\p_x^s U\|_{L^\inf(\T)}+\|\w\|_{H^{s,\ga}}\}I,
\end{equation*}
and hence, we apply the maximum principle in Lemma \ref{max-p} to get
\begin{equation}\label{c317}
\underset{0\le \t \le t}{\sup}Q(\t)
\le e^{C_{\si}\{1+\|\p_x^s U\|_{L^\inf(\T)}+\|\w\|_{H^{s,\ga}}\}t}
(\|I(0)\|_{L^\inf(\O)}+\|I(t)|_{y=0}\|_{L^\inf(\T)}).
\end{equation}
By virtue of $s \ge 4$, we apply the Sobolev inequality \eqref{Sobolev} to get
\begin{equation*}
\|I(t)|_{y=0}\|_{L^\inf(\T)}\le C\|\w\|_{H^{s,\ga}}^2.
\end{equation*}
This and the inequality \eqref{c317} yield directly
\begin{equation}\label{c318}
\underset{0\le \t \le t}{\sup}Q(\t)
\le e^{C_{\si}\{1+\underset{0\le \t \le t}{\sup}\|\p_x^s U(\t)\|_{L^\inf(\T)}
+\underset{0\le \t \le t}{\sup}\|\w(\t)\|_{H^{s,\ga}}\}t}(Q(0)+\|\w\|_{H^{s,\ga}}^2).
\end{equation}

Step 3: Life span time.
Taking $T_1=\frac{\{1-(\frac{1}{2})^{\frac{s-2}{2}}\}(s-2)}
{2\max\{C_{s,\ga}(1+M_U)^2,C_{s,\ga,\ep}\}\{{1+\|w_0\|_{H^{s,\ga}}^2}\}^{\frac{s-2}{2}}}$,
we get by using \eqref{c316}
\begin{equation}\label{c319}
\underset{0\le \t \le T_1}{\sup}\|\w(\t)\|_{H^{s,\ga}}^2\le 2(1+\|w_0\|_{H^{s,\ga}}^2).
\end{equation}
Taking $T_2=\min\{T_1, \frac{{\rm ln} 2}{C_{\si}(3+M_U^{\frac{1}{2}}+\sqrt{2}\|w_0\|_{H^{s,\ga}})}\}$,
it follows from \eqref{c318}
\begin{equation}\label{c320}
\underset{0\le t \le T_2}{\sup}Q(t)\le 4(1+Q(0)+\|w_0\|_{H^{s,\ga}}^2).
\end{equation}
Taking $T_3=\min\{T_1, T_2, \frac{\d_0}{C(7+M_U+4Q(0)+6\|w_0\|_{H^{s,\ga}}^2)}\}$, we get by using \eqref{eq-f3}
\begin{equation}\label{c321}
\y^{\si}\w(t,x,y)\ge \y^{\si}w_0(x,y)-\d_0 \ge 2\d_0-\d_0=\d_0.
\end{equation}
Then, we have chosen the life span time $T_a:=T_3$ such the estimates \eqref{c319}-\eqref{c321} hold on.
Finally, we point out that we can use the local existence results established above to
extend the solution(defined on $[0, T_a]$) step by step to the time interval
$[0, T^\ep]$ such the estimates \eqref{c31}  and \eqref{c32} hold on.
Therefore, we complete the proof of this lemma.
\end{proof}

\section*{Acknowledgements}
Jincheng Gao's research was partially supported by
Fundamental Research Funds for the Central Universities(Grants No.18lgpy66)
and NNSF of China(Grants No.11801586).
Daiwen Huang's research was partially supported by the NNSF of China(Grants No.11631008)
and National Basic Research Program of China 973 Program(Grants No.2007CB814800).
Zheng-an Yao's research was partially supported by NNSF of China(Grant No.11431015).


\end{document}